\documentclass[11pt]{article}

\usepackage{jones}
\usepackage{mathrsfs}
\hypersetup{pdftitle={Regularity lemma for Gaussian polynomials}}

\DeclareMathOperator{\gr}{gr}
\let\gr\gr
\newcommand{\uH}{\mathscr H}
\newcommand{\uD}{\mathscr D}
\newcommand{\uP}{\mathscr P}
\newcommand{\U}{\mathcal U}

\newcommand{\toU}{\underset{n \to \cU}{\longrightarrow}}

\newcommand{\grH}{\mathcal{H}^{\textnormal{gr}}}

\title{Regularity lemma for Gaussian polynomials}
\author{Chris Jones\thanks{UC Davis, Department of Mathematics. \texttt{chijones@ucdavis.edu}}}
\date{\today}

\begin{document}

\maketitle
\vspace{-0.3in}
\begin{abstract}
We prove a regularity lemma for polynomials with fixed degree $d$ and any number of standard Gaussian input variables.
Following modern approaches to the Szemer{\'e}di regularity lemma,
we deduce our results from a general formulation of the regularity lemma using ultraproducts.
Along the way, we give a direct combinatorial proof of the celebrated ``fourth moment theorem''
of Nualart and Peccati, which characterizes the asymptotically Gaussian
polynomials inside degree-$d$ Gaussian chaos as those such that $\E p^2 \to 1$ and $\E p^4 \to 3$.
We apply the regularity lemma to obtain a short proof of the robust Borell inequality of Mossel and Neeman.
\end{abstract}

\section{Introduction}

Let $X_1, \dots, X_n \overset{\text{i.i.d.}}{\sim} \calN(0,1)$.
How many ``different'' degree-$d$ polynomials $p(X_1, \dots, X_n)$ are there, in the sense of, say, the law of $p$ as a random variable?
While $p$ seems like it could become arbitrarily complicated as $n \to \infty$,
this is not the case. Some examples are:
\begin{enumerate}[1.]
    \item $p(X_1) = \He_d(X_1)$ where $\He_d$ is the degree-$d$ probabilists' Hermite polynomial.
    \item $p(X_1, \dots, X_n) = \frac{1}{\sqrt{n d!}}\sum_{i=1}^n \He_d(X_i)$. By the central limit theorem, the law of this random variable is asymptotically $\calN(0,1)$ as $n \to \infty$.
    \item $p(X_1, \dots, X_n) = \frac 1n \sum_{1 \leq i < j \leq n}X_iX_j$. This is asymptotic to $\frac 12 \He_2(Z)$ for $Z \sim \calN(0,1)$.
\end{enumerate}

Despite the limit $n \to \infty$ in examples 2 and 3,
the law of $p$ approaches the law of a polynomial in just one standard Gaussian variable.
Herry et al.\ \cite{herry2024limit} proved that this is a general phenomenon:
any sequence of degree-$d$ Gaussian polynomials which converges in law, converges in law towards another Gaussian polynomial (possibly of lower degree).

Here we characterize the asymptotic limits
of Gaussian polynomials in much stronger ways,
broadly termed ``regularity lemmas for Gaussian polynomials''.
First, we prove a regularity lemma in the traditional sense of
approximation by a structured constant-size object.
For a set of variables $X$,
let $\mathcal{H}_d(X)$ be the span of the degree-$d$ Hermite polynomials in $X$, and let $\mathcal{H}_{\le d}(X)$ be the span of all monomials in $X$
up to degree $d$. $\cH_d(X)$ is also called the ``degree-$d$ Gaussian chaos'' for $X$.

\begin{theorem}[Regularity lemma, approximation version]\label{thm:approximation-intro}
    For every $d \in \N$, $\eps > 0$ there exists $M = M(d,\eps)$ such that:
    for all $p \in \mathcal{H}_{\le d}(X)$ such that $\E p^2 \leq 1$,
    there is $m \leq M$, a polynomial $q(Z_1, \dots, Z_m), \deg(q) \leq d$ and $Z_a \in \calH_{\le d}(X)$ such that:
    \begin{enumerate}[(i)]
        \item $\E (p-q)^2 \leq \eps$
        \item $W_2((Z_1, \dots, Z_{m}), \calN(0,1)^{\otimes m}) \leq \eps$ where $W_2$ is 2-Wasserstein distance.
    \end{enumerate}
\end{theorem}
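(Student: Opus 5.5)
The plan is a compactness argument by contradiction via ultraproducts, as the abstract suggests, combined with an energy-increment structure inside the limit object. Suppose the statement fails for some $d,\eps$. Then for every $M$ there is a counterexample $p_M \in \cH_{\le d}(X^{(M)})$ with $\E p_M^2\le 1$ that admits no approximation with $m\le M$. Form the ultraproduct of the Gaussian chaoses $\prod_{M\to\cU}\cH_{\le d}(X^{(M)})$. Hermite/chaos decomposition, hypercontractivity and uniformly bounded moments for bounded degree-$d$ polynomials make this a well-behaved limit space. Every finite tuple of elements has moments that converge along $\cU$, and by hypercontractivity these limiting moments determine a limiting law. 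Let $p_\infty$ denote the ultralimit of $(p_M)$.

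The key structural step is to show that $p_\infty$ lies, up to $L^2$ error $\eps/2$, in the algebra generated by finitely many ``Gaussian'' elements $Z_1,\dots,Z_m$ of the ultraproduct. These are elements whose joint law is exactly $\calN(0,1)^{\otimes m}$ in the limit. The plan is an iterative energy-increment argument, working degree by degree.
\begin{itemize}
\item Let $\cG$ be the closed span of the limit elements of $\cH_1$-type, and more generally the elements of each chaos whose law is Gaussian. By the fourth moment theorem (whose combinatorial proof the paper gives), an element $Z$ of degree-$k$ chaos with $\E Z^2=1$ is asymptotically Gaussian iff $\E Z^4\to 3$, and joint Gaussianity follows from pairwise covariances for chaos elements.
\item Project $p_\infty$ onto the polynomial algebra generated by an orthonormal Gaussian family. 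If the residual $r$ has large norm, argue that $r$ must correlate with some product of a new Gaussian element with lower-degree structure. Failing that, $r$ is ``pseudorandom'': its contractions vanish, which forces it to be orthogonal to everything of bounded complexity. This contradicts $r$ lying in a bounded-degree chaos with nonzero norm.
\item Each step increases the captured energy by an amount bounded below, so the process terminates after finitely many steps.
\end{itemize}

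Having obtained $Z_1,\dots,Z_m$ in the ultraproduct and a polynomial $q$ with $\E(p_\infty-q(Z))^2<\eps/2$, realize each $Z_a$ as an ultralimit of $(Z_{a,M})$ with $Z_{a,M}\in\cH_{\le d}(X^{(M)})$. Then for $\cU$-most $M$ we have $\E(p_M-q(Z_{\cdot,M}))^2<\eps$. Since the joint moments converge to Gaussian moments and hypercontractivity gives uniform integrability of squares, we also get $W_2$ convergence, so the $W_2$ distance is eventually $\le\eps$. Taking such an $M>m$ contradicts the choice of $p_M$.

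The main obstacle is the structural step: showing that the residual orthogonal to all polynomials in Gaussian elements must vanish. My first attempt would be induction on degree $d$. Write $r$ in its top chaos component and use contraction norms, which the fourth moment proof controls. If some contraction $r\otimes_k r$ is non-negligible, extract from it a lower-degree object, and by induction obtain a Gaussian factor $Z$ with $\E[r\cdot Z\,s]\ne 0$ for some lower-degree $s$. If all contractions vanish, $r$ is itself Gaussian, so it can be added as a new $Z_a$.
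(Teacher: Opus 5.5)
Your outer frame matches the paper's: a counterexample sequence, its class $p_\infty$ in the Hilbert ultraproduct, truncation of an expansion of $p_\infty$ to finitely many terms, a choice of representatives, and exit via $\cU$-large sets for both the $L^2$ error and the $W_2$ bound. (A side remark: hypercontractive moment growth does not give moment-determinacy once $d\ge 3$, since the cube of a Gaussian is moment-indeterminate. You only need determinacy for Gaussian limits, so this does no harm.)

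The gap is in the structural step, which is the whole content of the theorem. First, your second bullet is wrong as stated. Vanishing contractions do not make $r$ orthogonal to everything of bounded complexity; it is certainly not orthogonal to itself. By \cref{thm:fourth-moment} they make $r$ asymptotically Gaussian, which is the case where $r$ becomes a new variable, as your last paragraph says.

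More seriously, nothing supports ``each step increases the captured energy by an amount bounded below.'' In the ultraproduct your dichotomy is purely qualitative: either $\lim_{n\to\cU}\|R_n^{(j)}\|=0$ for all $0<j<k$, or some such limit is positive. A positive limit gives no lower bound in terms of $\|r\|_2$. Worse, the ``$r$ is Gaussian'' branch need never occur. Take a residual $r=a_0 g+\sum_{j\ge1}a_j\,u_j\diamond v_j$, where $g\in\uP_k$ is a unit primitive element, the $u_j\diamond v_j$ are orthonormal, and $a_j\downarrow 0$.
\begin{itemize}
\item After removing any finite number of tail terms, the residual still has a nonzero flattening, so you always take the extraction branch.
\item Each step gains only about $a_j^2$, and $g$ is never added.
\item Since $g\perp\uD_k$, and by \cref{lem:wick-hermite} the chaos-$k$ part of any polynomial in lower-chaos primitive variables lies in $\uD_k$, the energy $a_0^2$ is never captured.
\end{itemize}
If you instead threshold at some $\delta$, the small-contraction branch only gives approximate Gaussianity. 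Exactness in the ultraproduct is then lost, and you are back to the quantitative finitary energy-increment argument, which the paper only sketches and your proposal does not carry out.

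The missing idea is to stop waiting for the residual itself to be primitive. Instead, split each chaos orthogonally as $\uH_k=\uD_k\oplus\uP_k$, where $\uD_k$ is the closed span of $u\diamond v$ with $u\in\uH_j$, $v\in\uH_{k-j}$, $0<j<k$. By \cref{lem:primitive}, $\uP_k$ consists exactly of the classes whose proper flattenings have vanishing spectral norm. Hence every finite orthonormal family of primitives is jointly standard Gaussian by \cref{thm:fourth-moment}, and the primitive part of any element can be projected out directly.

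No increment bound is then needed: you prove a density statement and truncate. The paper does this by showing, via \cref{lem:wick-hermite} (which rests on \cref{lem:combinatorial-variance}), that graded Hermite polynomials in an orthonormal basis of primitives form an orthonormal basis of $\uH_d$ (\cref{thm:regularity-ultralimit}).

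For the ungraded statement here, your degree induction can be repaired directly. Let $V_i$ be the closure of polynomials of degree at most $i$ in finitely many primitive elements, and suppose $\uH_i\subseteq V_i$ for all $i<k$.
\begin{itemize}
\item For $u\in\uH_j$ and $v\in\uH_{k-j}$, hypercontractive continuity of multiplication gives $uv\in V_k$.
\item Hence $u\diamond v=uv-\sum_{i<k}\Pi_i(uv)\in V_k$, so $\uD_k\subseteq V_k$.
\item Together with $\uP_k\subseteq V_1$, this gives $\uH_k\subseteq V_k$.
\end{itemize}
Then truncate $p_\infty$ within $V_d$ and run Gram--Schmidt on the finitely many primitives within each chaos; they stay primitive. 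Finally, exit the ultraproduct as you describe.
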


To prove \cref{thm:approximation-intro},
we will deduce it from a semantically stronger theorem
characterizing the \emph{ultraproduct} of Gaussian polynomials~\cite{elek2012measure,tao2013ultraproducts}.
Roughly speaking, our theorem states that the possible limits of Gaussian polynomials
are ``graded Gaussian polynomials''.

At an intermediate level between the completely finitary \cref{thm:approximation-intro}
and the ultraproduct viewpoint,
one can also view a regularity lemma as a ``compactification''.
For example, the space of graphons compactifies
the space of graphs under the cut metric~\cite{lovasz2006limits,elek2012measure,lovasz2012large}.
At this level, we prove that graded-degree-$d$ Gaussian polynomials are the closure of degree-$d$ Gaussian polynomials in distribution (and the compactification when the space is bounded, $\E p^2 \leq 1$).

\begin{theorem}[Regularity lemma, compactification version]\label{thm:compactification-intro}
Let $X$ be a set of countably many variables,
and let $Z$ be a set of graded variables with countably many variables of each grade.
Let $\grH_d(Z)$ be the span of graded-degree-$d$ Hermite polynomials in $Z$.
Let $d \in \N$.
\begin{enumerate}[(i)]
    \item For all $p \in \grH_d(Z)$, there exists a sequence $p_n \in \cH_d(X_1, \dots, X_n)$ such that $p_n \xrightarrow{d} p$.
    \item If $p_n \in \grH_d(Z)$ satisfies $p_n \xrightarrow{d} p$ for some $p$, then $p_n \xrightarrow{d} p'$ for some $p' \in \grH_d(Z)$.
\end{enumerate}
\end{theorem}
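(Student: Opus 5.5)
The two parts go in opposite directions, so I would prove them separately. For (i) I would build an explicit approximating sequence. For (ii) I would pass to an ultraproduct and identify the limit object as a graded polynomial. I read (ii) as concerning $p_n \in \cH_d(X)$, with limit in $\grH_d(Z)$. Throughout I would use hypercontractivity on fixed-degree chaos: it turns convergence in distribution plus bounded variance into convergence of all moments, and it gives uniform integrability of $p_n^k$ for every $k$.

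\emph{Part (i).} Fix $p \in \grH_d(Z)$. Only finitely many graded variables appear in $p$, say $Z_{k,j}$ of grade $k$. For each of them choose a disjoint block of fresh variables $X$ and set
\[
Z^{(N)}_{k,j} = \frac{1}{\sqrt N}\sum_{i=1}^N X_{i,1}X_{i,2}\cdots X_{i,k},
\]
where the tuples $(X_{i,1},\dots,X_{i,k})$ are pairwise disjoint. Then $Z^{(N)}_{k,j} \in \cH_k(X)$ and $\E (Z^{(N)}_{k,j})^2 = 1$. By the multivariate CLT, the finite family $(Z^{(N)}_{k,j})_{k,j}$ converges jointly in law to i.i.d.\ $\calN(0,1)$; the family is independent across $(k,j)$ because the blocks are disjoint.

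Write $p = \sum c_\alpha \prod \He_{a_{k,j}}(Z_{k,j})$ with $\sum_{k,j} k\, a_{k,j} = d$. Substituting $Z^{(N)}$ gives $q_N := p(Z^{(N)})$, a polynomial in $X$ of degree $\le d$. Let $p_N$ be its orthogonal projection onto $\cH_d(X)$. The key estimate is $\E (q_N - p_N)^2 \to 0$. The components of $\He_a(Z^{(N)})$ in chaoses below $ak$ are sums of contractions of the kernel of $Z^{(N)}$ with itself, and these contraction norms are $O(N^{-1/2})$. Disjointness of the blocks means products across different $(k,j)$ create no further contractions. Given this estimate, $p_N$ and $q_N$ have the same limit law, and $q_N \xrightarrow{d} p(Z)$ by the continuous mapping theorem.

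\emph{Part (ii).} Let $p_n \in \cH_d(X)$ with $p_n \xrightarrow{d} p$. Then $\E p_n^2$ is bounded, so we may fix a nonprincipal ultrafilter $\cU$ and form the ultraproduct $L^2$ space, in which the ultralimit $p^\ast$ of the $p_n$ lives. By uniform integrability, all mixed moments of ultralimits are ultralimits of moments, so the law of $p^\ast$ equals the law of $p$.

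The structural step is to show that the ultraproduct of the chaoses $\cH_{\le d}$ is spanned, up to closure, by graded Hermite polynomials in a family of independent standard Gaussians $Z_{k,j}$ of grade $k$. I would do this by induction on the grade $k$.
\begin{enumerate}[1.]
\item Let $A_{<k}$ be the closed algebra generated by the Gaussians of grade $<k$ already constructed.
\item Project the ultraproduct of $\cH_k$ onto the orthogonal complement of the grade-$k$ part of $A_{<k}$, obtaining a space $G_k$.
\item Show that $G_k$ is a Gaussian Hilbert space, independent of $A_{<k}$.
\end{enumerate}
The combinatorial fourth moment theorem proved in the paper is the tool for step 3. An element of $G_k$ that is orthogonal to every product structure has vanishing contractions in the limit. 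By the fourth-moment characterization it is then Gaussian, and jointly Gaussian with the rest of $G_k$, asymptotically independent of $A_{<k}$. Choosing an orthonormal basis of $G_k$ gives the grade-$k$ variables $Z_{k,j}$, which are countably many because the spaces are separable after restricting to the countable algebra generated by the $p_n$.

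Finally, $p^\ast$ lies in the ultraproduct of $\cH_d$, so it expands as an $L^2$-convergent series of graded-degree-$d$ Hermite polynomials. Truncating this series gives elements of $\grH_d(Z)$ whose laws converge to that of $p^\ast$. To get an honest $p'\in\grH_d(Z)$, I would use boundedness of $\E p_n^2$ together with a compactness argument over coefficients. If that fails, I would interpret $\grH_d(Z)$ as its $L^2$-closure, which is the natural reading given countably many variables.

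\emph{Main obstacle.} I expect step 3 of the induction in (ii) to be hardest. The difficulty is showing that ``orthogonal to all products of lower-grade Gaussians'' forces the contractions to vanish in the ultralimit, which is what the fourth moment theorem needs as input. My first attempt would be to show that a contraction $f\otimes_r f$ with non-vanishing norm produces, by Cauchy–Schwarz, a nonzero correlation with a product of an element of grade $r$ and an element of grade $k-r$. This would contradict orthogonality to $A_{<k}$.
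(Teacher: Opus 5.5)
Your architecture matches the paper's. For (i) you use CLT-built approximants of each graded variable plus an $L^2$ comparison with the top chaos; your $N^{-1/2}\sum_i X_{i,1}\cdots X_{i,k}$ is an inessential variant of the paper's $N^{-1/2}\sum_j\psi_r(X_{ir,j})$. For (ii) you use the Hilbert ultraproduct and a split of $\uH_k$ in which your grade-$k$ part of $A_{<k}$ and $G_k$ play the roles of $\uD_k$ and $\uP_k$. You then use singular vectors of a flattening to show primitives have vanishing flattenings (\cref{lem:primitive}), and the fourth moment theorem.

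However, two steps of (ii) are missing or fail as stated.

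\emph{First}, you never show that Hermite polynomials in the constructed Gaussians lie in the right chaos. That is, you need $[\psi_\alpha(e_n)]\in\uH_{\gr(\alpha)}$, equivalently $[\psi_\alpha(e_n)]=\frac{1}{\sqrt{\alpha!}}\underset{i\in I}{\diamond}[e_{n,i}^{\diamond\alpha_i}]$ (\cref{lem:wick-hermite}). Joint Gaussianity of the $e_{n,i}$ gives $\E\psi_\alpha(e_n)^2\to1$, but not by itself that this mass sits in chaos $\gr(\alpha)$. Without it you cannot identify the grade-$k$ part of $A_{<k}$ with the closed span of top products $u\diamond v$. That identification is exactly what step 3 needs for the singular-vector correlation to contradict orthogonality. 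It is also needed to read the final expansion of $p^\ast$ as one in graded Hermite polynomials. The paper closes this by showing the top product also has norm tending to $1$ (\cref{lem:combinatorial-variance}). Inside your grade-by-grade induction it also follows from orthogonality. The components of $\psi_\alpha(e_n)$ in $\uH_j$, $j<\gr(\alpha)$, vanish because $\uH_j$ is already spanned by the $[\psi_\beta(e_n)]$ with $\gr(\beta)=j$, and these are orthogonal to $[\psi_\alpha(e_n)]$ by joint Gaussianity of all primitives.

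\emph{Second}, the inference ``all mixed moments agree, so the law of $p^\ast$ equals the law of $p$'' is invalid. In degree $\ge 3$ a Gaussian polynomial need not be determined by its moments; already $X^3$ is moment-indeterminate by a theorem of Berg. The paper applies the method of moments only to finitely many primitives, whose limit is Gaussian. Continuous mapping then gives convergence in law of a truncation $\sum_{\alpha\in F}c_\alpha\psi_\alpha(e_n)$. The remainder is controlled with bounded $1$-Lipschitz test functions and the $L^2$ tail $\sum_{\alpha\notin F}c_\alpha^2$, and \cref{lem:ultralimits}(iii) extracts an honest subsequence. Since $p_n\xrightarrow{d}p$ already, this gives $p'\overset{d}{=}p$.

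Three scope issues remain.

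Resolve your hedge in favour of the closure. Statement (ii) is false for finite spans: the limit in law of $\sum_{i\le n}2^{-i}\He_2(X_i)$ is not the law of any finite graded polynomial. So no compactness argument over coefficients can produce a finite $p'$. With $\grH_d(Z)$ the closed span as in \eqref{eq:limit-spaces}, $p'=\sum_\alpha c_\alpha\psi_\alpha(Z)$ is simply the $L^2$-convergent series, since $\sum_\alpha c_\alpha^2=\lim_{n\to\cU}\E p_n^2<\infty$. It uses only countably many variables because $p^\ast$ has countably many nonzero coefficients in an orthonormal basis; no separability argument is needed.

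Correspondingly, in (i) you cannot assume only finitely many variables occur in $p$. Approximate finite truncations and take a diagonal sequence, as in \cref{lem:approximability}.

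Finally, (ii) as stated concerns $p_n\in\grH_d(Z)$, not $\cH_d(X)$. You need the reduction at the start of the proof of \cref{thm:regularity-lemma-compactness}. Use (i) to pick ordinary $q_n$ whose laws are within $1/n$ of those of $p_n$ in bounded-Lipschitz distance, so that $q_n\xrightarrow{d}p$. Then obtain $\sup_n\E q_n^2<\infty$ from tightness via Paley--Zygmund and hypercontractivity (\cref{lem:moments}). The same argument is what justifies your unexplained claim that $\E p_n^2$ is bounded.
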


Equivalently, $\{\Law(p): p \in \grH_d(Z)\}$ is the closure of $\{\Law(p) : p \in \cH_d(X)\}$ in the space of probability measures on $\R$ with the weak topology.
A consequence of (ii) is that $\{\Law(p): p \in \grH_d(Z), \E p^2 \leq 1\}$ is compact in the weak topology.

In the main text, we will prove multivariate versions of these theorems.
Using a hypercontractivity argument, we can also generically upgrade \cref{thm:compactification-intro} from the weak topology to Wasserstein metrics, see \cref{sec:hypercontractivity}.
Considering it to be a problem of independent interest, Herry et al.\ \cite{herry2024limit} proved a variant of this theorem, which is that $\cH_{\le d}(X)$ is closed in distribution when $X$ is a set of countably many standard Gaussian variables.

There is a large body of work on characterizing the Gaussian limits inside $\calH_d(X)$.
The famous ``fourth moment theorem'' of Nualart and Peccati~\cite{nualart2005central,nualart2008central,nourdin2015optimal}
states that, if $\E p_n^2 \to 1$, then $p_n \xrightarrow{d} \calN(0,1)$ if and only if $\E p_n^4 \rightarrow 3$
if and only if all ``contraction norms'' of $p_n$ vanish.
We identify a small extension: if and only if
all proper flattenings of the coefficient tensor
of $p_n$ have vanishing spectral norm.
We use this to give a direct combinatorial proof of the fourth moment theorem
and then to prove the regularity lemma.

Our idea for the regularity lemma was motivated by an application to the theory of the Grothendieck constant \cite{heilman2026primal}.
In \cref{sec:borell} we give another application:
a much shorter proof of the ``robust Borell inequality'' of Mossel and Neeman \cite{mossel2015robust}.
Borell's inequality states that the maximum possible noise stability of a set $A \subseteq \R^n$ with fixed Gaussian measure is a halfspace.
The robust version states that any near-maximizer is close to a halfspace in $L_2$.
The regularity lemma gives robustness results like this almost for free, by a general strategy:
\begin{enumerate}[1.]
    \item Suppose we want to determine which Gaussian polynomial $p \in \cH_{\le d}(X)$ maximizes a target quantity.
    As long as the target is suitably continuous (such as noise stability), there exists a maximizer in the compactified space of graded Gaussian polynomials.
    \item Identify the maximizers on the compactified space. Perturbative arguments
    can be used here (we can search for local optima, then select the largest, to identify the global optimum).
    \item Robustness is free: by compactness and continuity,
    for every $\eps > 0$, there exists $\delta > 0$
    such that all $\delta$-near-optimal points are within $\eps$ of global optima (in a suitable metric).
\end{enumerate}
For the robust Borell inequality, we use a slight variant of this argument mainly because set indicator functions are not polynomial functions.
Since noise stability smooths away high-degree components, approximation by a low-degree polynomial still works.

\paragraph{Statement of AI use.} An initial draft of the regularity lemma was written in April 2026 using some assistance from GPT-5.4 (given the tensor decomposition strategy, it was able to suggest using the fourth moment theorem, although I could not understand its proof strategy).
For the current version, GPT-6 Astra helped close technical gaps and extend the proofs.
While the first version used an energy-increment argument,
GPT-6 Astra suggested the ultraproduct approach (communicated by Giulio Malavolta).
The author takes responsibility for the contents of the paper and any remaining errors.

\section{Graded Gaussian polynomials}
\label{sec:graded}

A graded variable is a variable with an integer label $r>0$ called its ``grade''.
We write $X_1, X_2, X_3,\dots$ for ``original'' grade-one
variables and $Z^{(r)}_1, Z_2^{(r)}, Z_3^{(r)},\dots$ for grade-$r$ variables,
with the identification $X_i = Z_i^{(1)}$ at grade one.
Sample $Z_i^{(r)} \overset{\text{i.i.d.}}{\sim} \calN(0,1)$ independently across all variables regardless of grade.

The idea of the asymptotic limit is that an asymptotically Gaussian
random variable in $\cH_d(X)$ can be replaced in the limit by a single new grade-$d$ variable.
In this way, we will represent the limit of a sequence of Gaussian polynomials
by a single graded polynomial. When we do an operation on this limit object
which depends on the degree (e.g.\ calculate the degree-$d$ weight)
then we use the grade information to recover the ``original'' degree.

We use the following standard notions from Gaussian analysis.
Let $\He_k$ be the unnormalized probabilists' Hermite polynomials.
Let $\psi_k = \He_k/\sqrt{k!}$ be their normalizations.
For a finitely-supported multi-index $\al \in \N^X$
let $X^\alpha = \prod_i X_i^{\alpha_i}$ and similarly for $\He_\alpha$ and $\psi_\alpha$. Let
$|\alpha|=\sum_i\alpha_i$ and $\alpha! = \prod_i \alpha_i!$.
Then $\{\psi_\alpha(X) : \alpha \in \N^X, |\alpha|<\infty\}$ is an orthonormal basis of
$L_2(X)$.
Let $\cH_d(X)$ be the closed span of the degree-$d$ Hermite polynomials in $X$.
Let $\cH_{\le d}(X) = \oplus_{i = 0}^d \cH_{i}(X)$. Let $\ip{f,g} = \E fg$ and $\|f\|_2 = (\E f^2)^{1/2}$.

Polynomials in $Z$ have an additional notion of total grade, or graded degree.
Let $\gr(\alpha) = \sum_{i,r} r\alpha_{ir}$ be the total grade of a multi-index $\al \in \N^Z$.
The Hermite polynomial $\He_\alpha(Z)$ is defined as before, and $\psi_\alpha(Z)$ are still an orthonormal basis for $L_2(Z)$.
Define graded-degree-$d$ chaos by
\begin{equation}\label{eq:limit-spaces}
    \grH_d(Z) = \overline{\linspan}\left(\He_\al(Z) : \gr(\al) = d\right)\,.
\end{equation}
The notation $\overline{\linspan}$ denotes the closure of the linear span in $L_2(Z)$.

The following notions on Gaussian space interact with degree, and so behave differently on graded Gaussian space.
Let $\Pi_d$ be the operator on $L_2(Z)$ which projects to $\grH_d(Z)$.
Let $W_d(f) = \norm{\Pi_d f}_2^2$.
For variables with grade $r$, the Ornstein--Uhlenbeck noise operator at parameter $t \in [0,1]$ acts by sampling standard Gaussians $W$ with covariance $\E[W_i^{(r)} Z_i^{(r)}] = t^r$.
Under these definitions, we still have the noise correlation and Parseval identities,
\begin{equation}\label{eq:correlation}
 \E_{(W,Z) \sim t\text{-correlated}}[f(W)g(Z)]=\sum_{d\ge0}t^d\langle\Pi_d f,\Pi_d g\rangle\,, \qquad \qquad \E[f(Z)^2] = \sum_{d \geq 0} W_d(f)\,.
\end{equation}

We verify that all $p \in \grH_d(Z)$ can be approximated by finite Gaussian polynomials, which is the first (easier) part of \cref{thm:compactification-intro}.
\begin{lemma}\label{lem:approximability}
    Let $d_k \in \N$ for $k \in \N$.
    For all $p_k \in \grH_{d_k}(Z)$
    there exist sequences $p_{n,k} \in \cH_{d_k}(X_1, \dots, X_{n})$ such that $(p_{n,k})_{k \in \N} \xrightarrow{d} (p_k)_{k \in \N}$.
\end{lemma}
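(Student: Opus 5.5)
The plan is to replace each graded variable $Z^{(r)}_i$ by an asymptotically Gaussian element of the degree-$r$ chaos $\cH_r(X)$ built from disjoint blocks of original variables, and then push the substitution through Hermite polynomials. Concretely, for a grade $r$ and a block $B \subseteq \{1,\dots,n\}$ of size $N$, set
\[
Y_B^{(r)} = \frac{1}{\sqrt{N}} \sum_{i \in B} \psi_r(X_i) \in \cH_r(X),
\]
and assign disjoint blocks $B_{n,i,r}$ to the finitely many pairs $(i,r)$ with $i, r \le n$ (for $r=1$ one can simply keep $X_i$ itself, since $X_i = Z^{(1)}_i$). By the multivariate central limit theorem, since the blocks are disjoint and the summands are independent with mean zero and unit variance, the joint vector $(Y^{(r)}_{B_{n,i,r}})_{i,r \le K}$ converges in distribution to $\calN(0,1)^{\otimes}$ for each fixed $K$, and by hypercontractivity, because all moments are bounded uniformly, we also get convergence of all joint moments.

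The key algebraic step is to handle a single Hermite monomial $\He_\alpha(Z)$ with $\gr(\alpha) = d$. The naive substitution $\He_\alpha(Y)$ is a polynomial of degree $d$ in $X$, but it is not in $\cH_d(X)$: products of chaos elements have lower-order components. I will therefore define the substitute as $\Pi_d$ (the ordinary top-chaos projection in $X$) applied to $\prod_{i,r} \He_{\alpha_{ir}}(Y^{(r)}_{B_{n,i,r}})$. I then claim that the lower-chaos remainder tends to $0$ in $L_2$. The reason is that products of distinct blocks are independent, so the contraction terms come only from within a single block. There, terms where some index $i \in B$ is repeated carry a factor $N^{-1}$ relative to the combinatorial count, and the Hermite correction in $\He_k(Y)$ exactly cancels the diagonal pairings at the Gaussian level. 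The cleanest way to argue this is to compare second moments. I expect $\E[\He_\alpha(Y)^2] \to \alpha!$ by the moment convergence, and I would compute $\E[(\Pi_d \He_\alpha(Y))^2] \to \alpha!$ directly from the tensor description: the off-diagonal (distinct-index) part of the symmetric tensor has norm tending to that of the Gaussian. The difference then gives an $L_2$ remainder tending to $0$. This step, showing that projecting onto $\cH_d(X)$ costs nothing asymptotically, is where I expect the main obstacle. An alternative I would try first is to build the substitute directly as the off-diagonal multilinear sum over distinct indices, which automatically lies in $\cH_d(X)$, and to show that it differs from $\He_\alpha(Y)$ by $o(1)$ in $L_2$, using the known expansion $\He_k(N^{-1/2}\sum \xi_i) = N^{-k/2}\sum_{\text{distinct}} \xi_{i_1}\cdots\xi_{i_k} + o(1)$ for i.i.d.\ centered unit-variance $\xi_i$.

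Given this, I extend linearly and pass to the general case by truncation. Write $p_k = \sum_\alpha c_{k,\alpha} \He_\alpha(Z)$, and choose finite truncations $p_k^{(K)}$ with $\|p_k - p_k^{(K)}\|_2 \le 1/K$ for all $k \le K$. For each fixed $K$, the substitution map sends $p_k^{(K)}$ to elements $q^{(K)}_{n,k} \in \cH_{d_k}(X_1,\dots,X_n)$ whose joint law, together with the substituted variables, converges, for $k \le K$, to the law of $(p_k^{(K)})_{k\le K}$. This holds because each $q_{n,k}^{(K)}$ is an $L_2$-small perturbation of a fixed continuous polynomial of the CLT vector. A diagonal argument then finishes the proof. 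Choose $K(n) \to \infty$ slowly enough that the finite-dimensional distances (in a metric for weak convergence on $\R^\N$ with the product topology) are at most $1/K$, set $p_{n,k} = q^{(K(n))}_{n,k}$ for $k \le K(n)$, and set $p_{n,k} = 0$ otherwise. Finally, combine this with $L_2$ approximation, which implies closeness in law, to conclude that $(p_{n,k})_k \xrightarrow{d} (p_k)_k$.
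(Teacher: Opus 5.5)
Your proposal is correct and follows essentially the paper's route. The paper also stands in for $Z^{(r)}_i$ with block sums $N^{-1/2}\sum_j \psi_r(X_{ir,j})$, uses exactly your ``alternative'' off-diagonal multilinear sums $U_{m,ir}$ as the exact degree-$d$ substitutes, and proves an $O(N^{-1})$ second-moment comparison with $\He_\alpha$ of the block sums. It then combines the multivariate CLT, Slutsky, linearity, truncation and a diagonal argument, as you do. Your $\Pi_d$ variant is equivalent, since $\|\He_\alpha(Y)-\Pi_d\He_\alpha(Y)\|_2\le\|\He_\alpha(Y)-U_\alpha\|_2$, where $U_\alpha$ is the product of your off-diagonal sums. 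One bookkeeping fix: the disjoint blocks cannot go to all pairs $(i,r)$ with $i,r\le n$ inside $\{1,\dots,n\}$. They should go only to the finitely many pairs used by the current truncation, with $K(n)\to\infty$ slowly and block size $N(n)\to\infty$.
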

\begin{proof}
For each graded variable $Z^{(r)}_i$ introduce new standard Gaussians $X_{ir,j}$ for $j \in [N]$.
Define approximants for $Z^{(r)}_i$ and $\He_m(Z^{(r)}_i)$ by,
\[
    S_{ir} = \frac{1}{\sqrt{N}} \sum_{j=1}^N \psi_r(X_{ir,j}), \qquad U_{m,ir} = \frac{1}{N^{m/2}} \sum_{\substack{j_1, \dots, j_m \in [N]\\ j_1, \dots, j_m \text{ distinct}}} \psi_r(X_{ir,j_1})\psi_r(X_{ir,j_2})\cdots \psi_r(X_{ir,j_m})\,.
\]
We approximate $p_k \in \grH_{d_k}(Z)$ by:
\[
    p_{N,k} = \sum_\alpha c_\alpha \prod_{i,r} U_{\alpha_{ir},ir} \qquad \text{where}\qquad p_k = \sum_\alpha c_\alpha \He_\alpha(Z) = \sum_\alpha c_\alpha \prod_{i,r} \He_{\alpha_{ir}}(Z_i^{(r)})\,.
\]
All terms in $p_{N,k}$ have Hermite degree $d_k$ so $p_{N,k} \in \calH_{d_k}(X)$. It remains to show $(p_{N,k})_{k \in \N} \xrightarrow{d} (p_k)_{k \in \N}$ as $N \to \infty$.
This is equivalent to $(p_{N,1}, \dots, p_{N,k}) \xrightarrow{d} (p_1,\dots, p_k)$ for every finite $k$.

First assume each $p_k=\He_{\alpha^k}(Z)$ is a single Hermite polynomial.
Consider $\tilde p_{N,k} = \He_{\alpha^k}(S)$. By the multivariate central limit theorem, $(\tilde p_{N,1}, \dots, \tilde p_{N,k}) \xrightarrow{d} (p_1, \dots, p_k)$.
On the other hand,
\begin{equation}\label{eq:multilinear-apx}
    \E (\tilde p_{N,k} - p_{N,k})^2 = O(N^{-1})\,.
\end{equation}
This is a combinatorial moment calculation which we include in \cref{lem:omitted-moment}.

Combine \cref{eq:multilinear-apx} and the convergence $(\tilde p_{N,1}, \dots, \tilde p_{N,k}) \xrightarrow{d} (p_1, \dots, p_k)$ with Slutsky's lemma to deduce $( p_{N,1},\dots, p_{N,k}) \xrightarrow{d} (p_1, \dots, p_k)$.
This completes the case of single Hermite polynomials.

The continuous mapping theorem gives $(p_{N,1}, \dots, p_{N,k}) \xrightarrow{d} (p_1, \dots, p_k)$ if every
$p_k$ is a finite sum.
For $p_k\in\grH_{d_k}(Z)$ with an infinite number of terms,
take a sequence of finite $L_2$ approximants, approximate each
one, then take a diagonal sequence to obtain $(p_{N,1},\dots,p_{N,k}) \xrightarrow{d} (p_1,\dots, p_k)$.
\end{proof}

\section{Proof of the regularity lemma}
\label{sec:regularity}

\subsection{Regularity lemma for degree 1 and 2}

A degree-1 polynomial is of the form $p(X) = \sum_i c_i X_i$.
The law of this polynomial is always Gaussian, so the characterization of degree-1 Gaussian polynomials is immediate.

As a warm-up, we prove a version of \cref{thm:approximation-intro} for degree-2 Gaussian polynomials.
\begin{lemma}\label{lem:degree2}
    Let $\eps > 0$.
    For all $p \in \cH_2(X_1, \dots, X_n)$ such that $\E p^2 \leq 1$, there exist $Z_0 \in \cH_2(X_1, \dots, X_n)$ and $Z_1, \dots, Z_m \in \cH_1(X_1, \dots, X_n)$ with $m \leq 2\eps^{-2}$ and coefficients $\lam_a$ such that:
    \begin{enumerate}[(i)]
        \item $p = Z_0 + \sum_{a=1}^m \lam_a\He_2(Z_a)$.
        \item $Z_0, Z_1,\dots, Z_m$ are independent and $W_2(Z_0, \calN(0, s)) \leq C\eps$ for an absolute constant $C$ where $s = \E Z_0^2$.
    \end{enumerate}
\end{lemma}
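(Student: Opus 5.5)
Write $p \in \cH_2(X_1,\dots,X_n)$ through its symmetric coefficient matrix: $p = \sum_{i,j} A_{ij}(X_iX_j - \delta_{ij})$ with $A$ symmetric. Then $\E p^2 = 2\|A\|_F^2 \le 1$. Diagonalize $A = \sum_k \mu_k v_kv_k^\top$ with orthonormal $v_k$, and set $Y_k = \langle v_k, X\rangle$. The $Y_k$ are i.i.d.\ $\calN(0,1)$ and
\[
p = \sum_k \mu_k (Y_k^2 - 1) = \sum_k \mu_k \He_2(Y_k)\,, \qquad \sum_k \mu_k^2 \le \tfrac12\,.
\]

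Split the eigenvalues at threshold $\eps$. Let $Z_1,\dots,Z_m$ be the $Y_k$ with $|\mu_k| > \eps$, with $\lam_a$ the corresponding $\mu_k$. Since $\sum \mu_k^2 \le 1/2$, we get $m < \eps^{-2}/2 \le 2\eps^{-2}$. Let $Z_0 = \sum_{|\mu_k|\le\eps}\mu_k\He_2(Y_k)$. Then (i) holds by construction. The $Z_a$ are distinct coordinates $Y_k$ and lie in $\cH_1$, while $Z_0 \in \cH_2$ depends only on the remaining $Y_k$. Since the $Y_k$ are independent Gaussians, $Z_0,Z_1,\dots,Z_m$ are independent.

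The remaining and main step is the quantitative CLT bound $W_2(Z_0,\calN(0,s))\le C\eps$, where $s = 2\sum_{|\mu_k|\le\eps}\mu_k^2$. Here $Z_0$ is a sum of independent centered variables $\xi_k = \mu_k(Y_k^2-1)$ with $\max_k|\mu_k|\le\eps$. My first approach is a $W_2$ Berry--Esseen bound for independent summands, such as Rio's bound $W_2 \lesssim (\sum_k \E\xi_k^4)^{1/2}/\sqrt{s}$ or the Bobkov-type estimate. This gives
\[
\sum_k \E\xi_k^4 \lesssim \sum_k\mu_k^4 \le \eps^2\sum_k\mu_k^2 \lesssim \eps^2 s\,,
\]
so $W_2 \lesssim \eps$. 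The normalization needs care when $s$ is small, and in that case a trivial bound is available: $W_2(Z_0,\calN(0,s)) \le \|Z_0\|_2 + \sqrt{s} = 2\sqrt{s}$. Hence whenever $s \le \eps^2$ we are already done.

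If one prefers to avoid citing a $W_2$ CLT, an alternative is Stein's method for the chi-square-type sum. There, $W_1 \lesssim \sqrt{\mathrm{Var}(\Gamma)}$ and $\mathrm{Var}(\Gamma) \propto \sum\mu_k^4 \le \eps^2 s$. One then upgrades $W_1$ to $W_2$ using hypercontractive moment bounds on degree-$2$ chaos, which gives uniform integrability of $Z_0^2$. This upgrade is the technically delicate point and could lose a power of $\eps$.

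To keep the linear dependence on $\eps$ claimed in the statement, I would rely on the direct independent-sum $W_2$ bound. The key input is that the fourth cumulant of $Z_0$ is of order $\sum\mu_k^4 \le \eps^2 s$, so rescaling to unit variance makes it $O(\eps^2)$.
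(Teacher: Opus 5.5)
Your proposal is correct and follows essentially the same route as the paper. You diagonalize the coefficient matrix to get $p=\sum_k \mu_k(Y_k^2-1)$ with independent $Y_k=\langle v_k,X\rangle$, split off the eigenvalues above a threshold (you use $\varepsilon$ where the paper uses $\varepsilon/2$; both give $m\le 2\varepsilon^{-2}$), and apply Rio's $W_2$ central limit bound with $\sum_k\mu_k^4\le \varepsilon^2\sum_k\mu_k^2$. Your separate small-$s$ case and the Stein's-method alternative are unnecessary, because Rio's bound is scale-invariant and already covers every $s>0$ at once, as the paper notes.
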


\begin{proof}
Write $p$ in the Hermite basis as $p(X) = \sum_{i \neq j} P_{ij} X_iX_j + \sum_{i=1}^n P_{ii} \He_2(X_i)$ for a symmetric coefficient matrix $P \in \R^{n \times n}_\sym$.
Let $\sum_{a}\lam_a v_av_a^\T$ be the spectral decomposition of $P$.
Then
\begin{equation}
     p(X) = X^\T P X - \tr(P) = \sum_{a} \lam_a \ip{v_a, X}^2 - \tr(P) = \sum_{a} \lam_a (\ip{v_a, X}^2 - 1) = \sum_a \lam_a \He_2(\ip{v_a, X})\,.
\end{equation}
By the orthogonality of the eigendecomposition, $\ip{v_a, X}$ are independent standard Gaussians.

We have $\frac 12 \E p^2 = \sum_{i,j=1}^n P_{ij}^2 =: \norm{P}_F^2$.
On the other hand, $\norm{P}_F^2 = \sum_a \lam_a^2$.
Since $\E p^2 \leq 1$, we deduce that $\sum_a \lam_a^2 \leq \frac 12$.

Let $A = \{a : |\lam_a| \geq \eps/2\}$. By the above, $|A| \leq 2\eps^{-2}$.
Write
\begin{align*}
    p = \sum_{a \in A} \lam_a \He_2(\underbrace{\ip{v_a, X}}_{=: Z_a})
   + \underbrace{\sum_{a\notin A}\lam_a \He_2(\ip{v_a, X})}_{=: Z_0}\,. 
\end{align*}

Since the $\He_2(\ip{v_a, X})$ are independent and mean-zero,
a central limit theorem implies $Z_0$ is approximately Gaussian if there are many summands.
If there are few summands, $Z_0$ is small since $|\lam_a| \leq \eps/2$.
Both cases are handled simultaneously by the CLT from \cite[Theorem 4.1]{rio2009upper} which yields for an absolute constant $C$, with $s = \E Z_0^2 = 2\sum_{a\not\in A} \lam_a^2$,
\[
    W_2(Z_0, \calN(0,s)) \leq \frac{C}{\sqrt{s}} \left(\sum_{a \notin A} \lam_a^4 \right)^{1/2} \leq C \max_{a \notin A}|\lam_a| \leq C \eps\,. \qedhere
\]
\end{proof}

\subsection{Preliminaries on ultraproducts}

A collection $\cU$ of subsets of $\N$ is a \emph{non-principal ultrafilter} if:
\begin{enumerate}[(i)]
    \item $\cU$ contains $\N$ but not $\0$.
    \item If $A \in \cU$ and $A \subseteq B$, then $B \in \cU$.
    \item If $A,B \in \cU$ then $A \cap B \in \cU$
    \item If $A \subseteq \N$ then exactly one of $A$ and $\N \setminus A$ is in $\cU$.
    \item There are no finite sets in $\cU$.
\end{enumerate}
We say that $A \in \cU$ is a \emph{$\cU$-large} set of natural numbers.

Fix any\footnote{
    A non-principal ultrafilter $\cU$ can be easily constructed using Zorn's lemma, which uses the axiom of choice (ZFC).
    We consider it all but certain that \cref{thm:regularity-lemma-compactness,thm:regularity-approximation} can also be proven without choice.
} non-principal ultrafilter $\cU$ on $\N$.
We can use $\cU$ to ``pick'' the limit of a sequence of objects inside a compact Hausdorff space,
as summarized in the next lemma.

\begin{lemma}[{\cite[Theorems~13.7 and~17.4]{willard1970general}}]
\hspace{0.1cm}\label{lem:ultralimits}
    \begin{enumerate}[(i)]
        \item (Existence and uniqueness of limits)
        Let $S$ be a compact Hausdorff space.
        For all sequences $x_n \in S$ there exists a unique $x \in S$ such that,
        for all open neighborhoods $V$ of $x$, we have $\{n : x_n \in V\} \in \cU$.
        Write $x = \lim_{n \to \calU} x_n$ or $x_n \underset{n \to \cU}{\longrightarrow} x$.

        \item (Continuity of limits) Let $S, T$ be Hausdorff spaces and $f: S \to T$ continuous.
        If $x_n \toU x$ in $S$ then $f(x_n) \toU f(x)$ in $T$.
        
        \item (Extraction of a subsequence)
            Let $x_n\toU x$ in a first-countable Hausdorff space $S$
            (for example, a metrizable space).
            Then there exists an infinite subsequence such that $x_{n}\to x$
            in $S$ along the subsequence.
            Moreover, the subsequence may be chosen inside
            any $A\in\cU$.
        \end{enumerate}
\end{lemma}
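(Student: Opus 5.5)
Since \cref{lem:ultralimits} is quoted from \cite{willard1970general}, we only sketch how its three parts follow from standard facts about ultrafilters and compactness.

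For (i), \emph{existence} goes by contradiction. Suppose no $x \in S$ works. Then every $x$ has an open neighborhood $V_x$ with $\{n : x_n \in V_x\} \notin \cU$, so by property (iv) its complement $\{n : x_n \notin V_x\}$ lies in $\cU$. The sets $V_x$ cover $S$, so compactness gives a finite subcover $V_{x_1}, \dots, V_{x_k}$. By property (iii), the finite intersection $\bigcap_{j} \{n : x_n \notin V_{x_j}\}$ is in $\cU$. But this set is empty, since every $x_n$ lies in some $V_{x_j}$. This contradicts property (i).

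\emph{Uniqueness} uses the Hausdorff property. If $x \neq y$ were both limits, choose disjoint open neighborhoods $V \ni x$ and $W \ni y$. Then $\{n : x_n \in V\}$ and $\{n : x_n \in W\}$ are both in $\cU$, so their intersection is in $\cU$. That intersection is empty, again contradicting property (i).

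Part (ii) is immediate from the definition. Let $V$ be an open neighborhood of $f(x)$. Then $f^{-1}(V)$ is an open neighborhood of $x$, so $\{n : f(x_n) \in V\} = \{n : x_n \in f^{-1}(V)\} \in \cU$. (Here $\toU$ is read as the defining neighborhood condition, which makes sense in any Hausdorff space.)

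For (iii), take a countable decreasing neighborhood base $V_1 \supseteq V_2 \supseteq \cdots$ at $x$. Fix $A \in \cU$. Each set $A_k = A \cap \{n : x_n \in V_k\}$ lies in $\cU$ by property (iii). Each $A_k$ is therefore infinite by property (v), and the $A_k$ are decreasing in $k$.

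We now choose indices inductively: pick $n_1 < n_2 < \cdots$ with $n_k \in A_k$, which is possible because each $A_k$ is infinite. For any neighborhood $U$ of $x$, some $V_K \subseteq U$. For every $k \geq K$ we have $x_{n_k} \in V_k \subseteq V_K \subseteq U$. Hence $x_{n_k} \to x$, and by construction every $n_k$ lies in $A$.

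The only mildly delicate point in the whole argument is in (i): the existence step uses property (iv) to pass to complements and property (iii) only for finite intersections. Compactness is precisely what reduces the infinite cover to a finite one.
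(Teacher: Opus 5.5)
Your argument is correct. The paper gives no proof of its own; it only cites Willard's general results (in a compact space every ultrafilter converges, and in a Hausdorff space limits are unique), and your finite-subcover and disjoint-neighborhood arguments are the standard proofs of those facts applied to the image ultrafilter $\{B \subseteq S : \{n : x_n \in B\} \in \mathcal{U}\}$. Parts (ii) and (iii) are the routine verifications the citation leaves implicit, and you handle (iii) correctly: you choose $n_k \in A \cap \{n : x_n \in V_k\}$, using property (v) to ensure these sets are infinite.
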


While \cref{lem:ultralimits} applies to compact spaces,
sequences inside a non-compact space can still be studied
using \emph{ultraproducts}.
We are specifically interested in sequences and limits of Gaussian polynomials. Define
\[
    B_d = \{(p_n)_{n \in \N}: p_n \in \cH_{\le d}(X_1, \dots, X_n),\, \sup_n \E p_n^2 < \infty\}\,, \qquad N_d = \{(p_n)_{n \in \N} \in B_d : \lim_{n \to \cU} \E p_n^2 = 0\}\,.
\]
The scalar ultralimit in the definition of $N_d$ exists because $\E p_n^2$ is bounded and so $(\E p_n^2)_{n \in \N}$ lies in a compact subset of $\R$.
By the triangle inequality, $N_d$ is a linear subspace of $B_d$.
The \emph{Hilbert ultraproduct} is the vector space
\[
    \uH_{\le d} = B_d/N_d\,.
\]

We write $[p_n]$ for elements of $\uH_{\le d}$
which formally are equivalence classes of the form $[p_n] = \{(p_n + r_n)_{n \in \N} : (r_n)_{n \in \N} \in N_d\}$.
A crucial property of $\uH_{\le d}$ is that it inherits the Hilbert space structure from $\cH_{\le d}(X)$.
\begin{lemma}\label{lem:ultra-hilbert}
    $\uH_{\le d}$ is a Hilbert space with inner product $\ip{[p_n], [q_n]}=\lim_{n \to \U}\E p_nq_n$.
\end{lemma}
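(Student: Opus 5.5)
We need to show three things: the inner product is well defined, it makes $\uH_{\le d}$ an inner product space, and $\uH_{\le d}$ is complete. The argument is the standard one for Hilbert-space ultraproducts, and the degree bound plays no role beyond guaranteeing that each $\cH_{\le d}(X_1,\dots,X_n)$ is a Hilbert space.

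\textbf{Well-definedness.} For $(p_n),(q_n)\in B_d$, Cauchy--Schwarz gives $|\E p_nq_n|\le \sup_n\|p_n\|_2\sup_n\|q_n\|_2<\infty$. The scalar sequence therefore lies in a compact interval, and the ultralimit exists by \cref{lem:ultralimits}(i). Now replace $p_n$ by $p_n+r_n$ with $(r_n)\in N_d$. The inner product changes by $\E r_nq_n$, which satisfies $|\E r_nq_n|\le \|r_n\|_2\sup_m\|q_m\|_2$. Since $\lim_{n\to\cU}\|r_n\|_2^2=0$, continuity of the square root and of multiplication (\cref{lem:ultralimits}(ii)) give $\lim_{n\to\cU}\E r_nq_n=0$. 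By symmetry the same holds when $q_n$ is changed. Bilinearity and symmetry follow because ultralimits are linear, which is again \cref{lem:ultralimits}(ii) applied to addition and scalar multiplication. For positivity, $\ip{[p_n],[p_n]}=\lim_{n\to\cU}\E p_n^2\ge0$, with equality exactly when $(p_n)\in N_d$, that is, when $[p_n]=0$.

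\textbf{Completeness.} This is the main step. Take a Cauchy sequence $[p^{(k)}]$ in $\uH_{\le d}$. Passing to a subsequence, we may assume $\|[p^{(k+1)}]-[p^{(k)}]\|<2^{-k}$. By definition of the ultralimit, the sets
\[
A_k=\{n:\|p^{(j+1)}_n-p^{(j)}_n\|_2<2^{-j}\ \text{for all } j<k\}
\]
are finite intersections of $\cU$-large sets, so $A_k\in\cU$. Intersect further with $\{n\ge k\}$; this is still in $\cU$ because $\cU$ is non-principal. The resulting sets are decreasing in $k$ and have empty intersection.

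For each $n$, let $k(n)$ be the largest $k$ with $n\in A_k$, taking $k(n)=1$ if $n\in A_1$ only or if no such $k$ exists; $k(n)$ is finite because the sets have empty intersection. Define $q_n=p^{(k(n))}_n$.

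To check that $(q_n)\in B_d$: for $n\in A_1$ we have $\|q_n\|_2\le\|p^{(1)}_n\|_2+\sum_j 2^{-j}$, which is bounded. On the complement of $A_1$, which is not in $\cU$, we are free to set $q_n=0$ without changing the class.

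For fixed $k$ and every $n\in A_k$, we have $k(n)\ge k$, so the telescoping sum gives
\[
\|q_n-p^{(k)}_n\|_2\le\sum_{j\ge k}2^{-j}=2^{1-k}.
\]
Hence $\|[q_n]-[p^{(k)}]\|\le 2^{1-k}\to0$, and $[p^{(k)}]\to[q_n]$. The only delicate point is this diagonal choice of $k(n)$ using nested $\cU$-large sets.
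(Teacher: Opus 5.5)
Your proposal is correct. Your completeness argument differs from the paper's.

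The paper never builds a limit by hand. It gives $B_d$ the sup norm $\sup_n\|p_n\|_2$, in which $B_d$ is Banach and $N_d$ is closed, because $(p_n)\mapsto\lim_{n\to\cU}\|p_n\|_2$ is $1$-Lipschitz. It then cites the general theorem that a Banach space modulo a closed subspace is complete in the quotient norm. Most of its effort goes into showing that this quotient norm equals $\lim_{n\to\cU}\|p_n\|_2$. Every representative has $\sup_n\|q_n\|_2\ge L$, and truncating to the $\cU$-large set $\{n:\|p_n\|_2<L+\eps\}$ gives a representative with sup at most $L+\eps$.

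You instead prove completeness directly with the classical diagonal argument for metric ultraproducts. You take a fast Cauchy subsequence, form the nested $\cU$-large sets $A_k\cap\{n\ge k\}$, and choose $q_n=p^{(k(n))}_n$. The telescoping bounds then give both $(q_n)\in B_d$ and $\|[q_n]-[p^{(k)}]\|\le 2^{1-k}$.

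What each buys:
\begin{itemize}
\item Your route is self-contained and needs no step reconciling two norms. It never takes a limit inside a factor space, since $q_n$ is always one of the given finite-stage elements. So it would work even if the factors $\cH_{\le d}(X_1,\dots,X_n)$ were incomplete metric spaces. The price is explicit bookkeeping and an essential use of non-principality, through $\{n\ge k\}\in\cU$, to make $k(n)$ finite.
\item The paper's route hides the Cauchy-series argument inside the quotient theorem and works for any ultrafilter, including principal ones. It relies on completeness of each factor and must separately identify the quotient norm with the ultralimit norm.
\end{itemize}
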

\begin{proof}
It is easy to check that $\ip{[p_n], [q_n]}$ defines a valid inner product on $\uH_{\le d} = B_d/N_d$.
What remains is to check that the space is complete.

Consider the space $B_d$ with the supremum norm $\norm{(p_n)_{n \in \N}}_\infty = \sup_n \norm{p_n}_2$.
Since each coordinate space $\calH_{\le d}(X_1, \dots, X_n)$ is complete, $B_d$ is complete in this norm ($B_d$ is a Banach space).
Moreover, $(p_n)_{n \in \N} \mapsto\lim_{n\to\cU}\norm{p_n}_2$ is
$1$-Lipschitz in this norm, so the zero set $N_d$ is closed in this norm.
By \cite[Theorem 1.41(d)]{rudin1991functional}, $B_d/N_d$ is complete in the quotient norm $\norm{[p_n]}_{B_d/N_d} := \inf_{(r_n)_{n \in \N} \in N_d}\norm{(p_n+r_n)_{n \in \N}}_\infty$.

It remains to show that the quotient norm agrees with the
inner-product norm $\sqrt{\ip{[p_n], [p_n]}} = \lim_{n \to \cU} \|p_n\|_2$
which will prove that $\uH_{\le d}$ is complete in the inner-product norm.
Fix $[p_n]$ and set $L=\lim_{n\to\cU}\norm{p_n}_2$.
Every representative $(q_n)_{n \in \N} \in B_d$ of $[p_n]$ satisfies
$\lim_{n\to\cU}\norm{q_n}_2=L$. Hence
$\sup_n\norm{q_n}_2\ge L$ by \cref{lem:ultralimits}(iii) and therefore the quotient norm is at least $L$.

Conversely, for every $\varepsilon>0$, the set $A=\{n\in\N:\norm{p_n}_2<L+\varepsilon\}$
belongs to $\cU$ by \cref{lem:ultralimits}(i). Define $q_n=p_n$ for $n\in A$ and
$q_n=0$ otherwise. Then $(p_n-q_n)\in N_d$ and
$\sup_n\norm{q_n}_2\le L+\varepsilon$.
Therefore the quotient norm is at most $L+\eps$. Letting $\eps\downarrow 0$ shows that it
equals $L$, completing the proof.
\end{proof}

Let $\uH_d = \{[p_n] : \forall n\in\N. \; p_n \in \cH_d(X)\}$ be the degree-$d$ component.
Since $\|\Pi_k p\|_2 \leq \|p\|_2$ then $N_d$ vanishes under $\Pi_k$ and $\Pi_k$ is well-defined on $\uH_{\le d}$. Hence, we maintain the decomposition $\uH_{\le d} = \oplus_{j=0}^d \uH_j$.

Two remarks on the purpose of introducing the ultraproduct:
\begin{enumerate}[1.]
    \item A technical advantage of the ultralimit is that it consistently chooses a limit
    inside all compact spaces simultaneously.
    If $x_n$ are in a compact Hausdorff space (e.g.\ bounded real numbers)
    then we can treat $\lim_{n \to \cU} x_n$ like a normal limit
    and delay applying compactness, passing to subsequences,
    etc.\ until we are ready to apply \cref{lem:ultralimits}(iii) to ``exit'' the ultraproduct.
    \item 
    For sequences in a non-compact space,
    ontologically, we can think of $[p_n] \in \uH_{\le d}$
    as a new type of object representing the limit of the sequence $p_n$.
    These objects inherit all of the usual operations of the original space, for example,
    given $[p_n], [q_n] \in \uH_{\le d}$ we can select representatives $p_n, q_n$
    and form $[p_n + q_n] \in \uH_{\le d}$.
    Classifying the limits of Gaussian polynomials reduces to classifying
    the elements of $\uH_{\le d}$
    (starting with \cref{lem:ultra-hilbert} which shows they form a Hilbert space).

    In the asymptotic Hilbert space, asymptotic approximations become exact relations. By \cref{lem:ultra-hilbert},
    \begin{align*}
        \|x_n - y_n\|_2 \toU 0 &\text{ if and only if } [x_n] = [y_n] \text{ in }\uH_{\le d}\\
        \ip{x_n, y_n} \toU 0 &\text{ if and only if } [x_n] \perp [y_n]\text{ in }\uH_{\le d}\,.
    \end{align*}
\end{enumerate}

\subsection{Orthogonal decomposition of the asymptotic Hilbert space}

Since $\uH_d$ is a Hilbert space, by Zorn's lemma it has an orthonormal basis, which is uncountable.
The goal of this section is to construct an orthonormal basis of ``graded Hermite polynomials''
for $\uH_d$ which will solve, in a way, the problem of describing $\uH_d$.

For \(p\in\cH_r(X)\) and \(q\in\cH_s(X)\), define the
\emph{top product} (or \emph{Wick product})
\[
 p\diamond q=\Pi_{r+s}(pq).
\]
It can be seen that $\diamond$ is commutative and associative.
We can also lift the top product to $\uH_d$.

\begin{lemma}
    $\diamond$ is a well-defined and continuous bilinear map $\uH_r \times \uH_s \to \uH_{r+s}$.
\end{lemma}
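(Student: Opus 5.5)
The plan is to define $[p_n] \diamond [q_n] := [p_n \diamond q_n]$ for representatives $p_n \in \cH_r(X_1,\dots,X_n)$, $q_n \in \cH_s(X_1,\dots,X_n)$, and to reduce everything to one finite-dimensional estimate, uniform in $n$:
\[
    \|p \diamond q\|_2 \le C_{r,s} \|p\|_2 \|q\|_2 \qquad \text{for } p \in \cH_r(X),\ q \in \cH_s(X).
\]

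To prove this estimate, first note that $\Pi_{r+s}$ is an orthogonal projection. Hence $\|p\diamond q\|_2 \le \|pq\|_2$. By Cauchy--Schwarz,
\[
    \|pq\|_2 \le \|p\|_4\|q\|_4 .
\]
Gaussian hypercontractivity gives $\|p\|_4 \le 3^{r/2}\|p\|_2$ for $p \in \cH_{\le r}(X)$, and the same for $q$. So $C_{r,s} = 3^{(r+s)/2}$ works, independently of $n$.

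Alternatively, one could avoid hypercontractivity by computing exactly. Writing $p,q$ via symmetric coefficient tensors $P, Q$, the product $p \diamond q$ has coefficient tensor $\mathrm{Sym}(P\otimes Q)$. Using $\|\He\|$-normalizations,
\[
    \|p\diamond q\|_2^2 = (r+s)!\,\|\mathrm{Sym}(P\otimes Q)\|_F^2 \le (r+s)!\,\|P\|_F^2\|Q\|_F^2 = \binom{r+s}{r}\|p\|_2^2\|q\|_2^2 .
\]
Either route suffices, and I would present the hypercontractive one for brevity.

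Given the estimate, the ultraproduct statements follow directly.

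\textbf{Membership in $B_{r+s}$.} If $\sup_n\|p_n\|_2,\sup_n\|q_n\|_2<\infty$, then $\sup_n\|p_n\diamond q_n\|_2<\infty$. Also $p_n \diamond q_n \in \cH_{r+s}(X_1,\dots,X_n)$, so $(p_n\diamond q_n)\in B_{r+s}$.

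\textbf{Independence of representatives.} If $(p_n') $ is another representative, so that $(p_n-p_n')\in N_r$, then bilinearity and the bound give
\[
    \|p_n\diamond q_n - p_n'\diamond q_n\|_2 \le C\|p_n-p_n'\|_2\sup_m\|q_m\|_2 \toU 0 .
\]
The same argument works in the second slot. Thus the class $[p_n\diamond q_n]$ does not depend on the chosen representatives, and the image lies in $\uH_{r+s}$.

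\textbf{Bilinearity and continuity.} Bilinearity is inherited pointwise. Taking ultralimits of the estimate, using \cref{lem:ultralimits}(ii) applied to the continuous maps $x\mapsto x^2$ and products, gives
\[
    \|[p_n]\diamond[q_n]\|\le C_{r,s}\|[p_n]\|\,\|[q_n]\| .
\]
A bounded bilinear map between normed spaces is jointly continuous, via $a\diamond b - a'\diamond b' = (a-a')\diamond b + a'\diamond(b-b')$.

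The only genuine content is the uniform-in-$n$ bound. The main point to get right is that the constant does not depend on the number of variables, which both the hypercontractive and the tensor argument guarantee. Everything else is routine bookkeeping with the quotient $B_d/N_d$.
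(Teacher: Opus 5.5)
Your proposal is correct and follows the paper's proof essentially verbatim: the same uniform bound $\|p\diamond q\|_2\le\|pq\|_2\le\|p\|_4\|q\|_4\le 3^{(r+s)/2}\|p\|_2\|q\|_2$ via contractivity of $\Pi_{r+s}$, Cauchy--Schwarz and hypercontractivity, then bilinearity plus this bound for independence of representatives, and an ultralimit of the bound for boundedness and hence joint continuity. Your alternative tensor computation is also valid (with your convention $\|p\|_2^2=r!\,\|P\|_F^2$; in the paper's normalization it reads $\|p\diamond q\|_2^2=c_{r,s}^2\|\mathrm{Sym}(P\otimes Q)\|_F^2\le\binom{r+s}{r}\|p\|_2^2\|q\|_2^2$ using \eqref{eq:top-product-tensor}), and it gives the sharper constant $\binom{r+s}{r}^{1/2}$ without needing hypercontractivity.
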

\begin{proof}
By contractivity of projection, Cauchy--Schwarz,
and hypercontractivity (\cref{lem:hypercontractivity}),
\[
    \norm{p\diamond q}_2
    \le \norm{pq}_2
    \le \norm{p}_4\norm{q}_4
    \le 3^{(r+s)/2}\norm{p}_2\norm{q}_2.
\]
Consequently, if $p_n \in \cH_r(X)$ and $q_n \in \cH_s(X)$ satisfy $\sup_n \E p_n^2 < \infty$ and $\sup_n \E q_n^2 < \infty$,
then $\sup_n \E (p_n\diamond q_n)^2 < \infty$ and hence $(p_n\diamond q_n)_{n \in \N} \in B_{r+s}$. Lift $\diamond$ to $\uH_{r} \times \uH_s$ by $[p_n]\diamond[q_n]:=[p_n\diamond q_n]$.
To check that $\diamond$ is independent of the representatives, suppose
$[p_n]=[p'_n]$ and $[q_n]=[q'_n]$.
By bilinearity and the preceding bound,
\[
\begin{split}
    \norm{p_n\diamond q_n-p'_n\diamond q'_n}_2
    \le 3^{(r+s)/2}\bigl(
        \norm{p_n-p'_n}_2\norm{q_n}_2
        +\norm{p'_n}_2\norm{q_n-q'_n}_2
    \bigr).
\end{split}
\]
The right-hand side tends to zero along $\cU$, since $\|q_n\|_2, \|p'_n\|_2$
are bounded and the difference terms tend to zero along $\cU$. Thus the product is well-defined.

Finally, taking ultralimits in the first inequality gives
\[
    \norm{u\diamond v}_{\uH_{r+s}}
    \le 3^{(r+s)/2}
       \norm{u}_{\uH_r}\norm{v}_{\uH_s}.
\]
Hence $\diamond$ is a bounded bilinear map and therefore
jointly continuous.
\end{proof}

Let $\uD_d$ be the subspace of $\uH_d$ generated by lower-degree polynomials,
\begin{equation}\label{eq:decomposition}
 \uD_d=\overline{\linspan}
   \{u\diamond v:u\in\uH_r,\ v\in\uH_{d-r},\ 0< r<d\}\,.
\end{equation}
Since $\uH_d$ is a Hilbert space, we can form the orthogonal decomposition $\uH_d = \uD_d\oplus\uP_d$ where $\uP_d$ is the orthogonal complement of $\uD_d$.
We call the elements of \(\uP_d\) \emph{primitive}.

Select an arbitrary orthonormal basis $\{[e_{n,i}] : i \in I_d\}$ for $\uP_d$
and let $I = \sqcup_{d \in \N} I_d$. We also normalize $\E e_{n,i}^2 = 1$ exactly.
For finitely-supported $\alpha \in \N^{I}$ define $\gr(\alpha) = \sum_{d \in \N}\sum_{i \in I_d}d \alpha_i$.
By the definition of $\uD_d$ and $\uP_d$ there is a dense spanning set for $\uH_d$ given by
\[
    \Big\{\frac{1}{\sqrt{\alpha!}}\underset{i \in I}{\diamond} [e_{n,i}^{\diamond \alpha_i}] : \al \in \N^I, \gr(\alpha) = d\Big\}\,.
\]

We will prove that all primitive elements are asymptotically Gaussian,
and these top products are asymptotically orthonormal Hermite polynomials with respect to the primitive Gaussians.
This will characterize the basis for $\uH_d$ as the collection of graded Hermite polynomials
in the primitive elements.

\begin{theorem}[Regularity lemma, ultrapower version]
\label{thm:regularity-ultralimit}
$\{[\psi_\alpha(e_n)] : \alpha \in \N^I, \gr(\alpha) = d\}$ is an orthonormal basis for $\uH_{d}$.
Furthermore, let $(Z_i)_{i \in I}$ be independent standard Gaussian variables such that $I_d$ is assigned grade $d$.
Let $d_k \in \N$ for $k \in \N$. For any $[p_{n,k}] \in \uH_{d_k}$ write the basis representation of $[p_{n,k}]$ and define $p_k \in \grH_{d_k}(Z)$ by
\[
    [p_{n,k}] = \sum_{\alpha \in \N^I : \gr(\alpha) = d_k} c_{k,\alpha} [\psi_\alpha(e_n)]\,, \qquad \qquad p_k = \sum_{\alpha \in \N^I: \gr(\alpha) = d_k} c_{k,\alpha} \psi_\alpha(Z)\,.
\]
Then $(p_{n,k})_{k \in \N} \overset{d}{\toU} (p_k)_{k \in \N}$.
\end{theorem}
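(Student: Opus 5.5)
Here $\psi_\alpha(e_n)$ means the Wick product $\frac{1}{\sqrt{\alpha!}}\diamond_i e_{n,i}^{\diamond\alpha_i}$, i.e.\ a representative of the dense spanning set above. The plan has three steps. First, show that any finite family of primitive elements is jointly asymptotically Gaussian. Second, deduce that the Wick products are asymptotically orthonormal. Third, transfer convergence in law from finite sums to general elements by density.

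\textbf{Step 1 (joint Gaussianity of primitives).} Take $u=\sum_j t_j[e_{n,i_j}]$ with finitely many indices, split by degree as $u=\sum_r u^{(r)}$ with $u^{(r)}\in\uP_r$. I will show $u_n\overset{d}{\toU}\calN(0,\|u\|^2)$, jointly with independence across degrees. Within a fixed degree $r$, apply the fourth moment theorem in the form announced in the introduction. If $\lim_{n\to\cU}\E (u^{(r)}_n)^2=\sigma^2>0$, then Gaussianity is equivalent to the vanishing of all contraction norms $\|f_n\otimes_k f_n\|$, $0<k<r$, where $f_n$ is the coefficient tensor.

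The key observation links contractions to orthogonality with $\uD_r$. For $0<k<r$, consider the flattening of $f_n$ into a matrix of shape $k$ versus $r-k$. The pairing $\langle u_n, a_n\diamond b_n\rangle$ with $a_n\in\cH_k$, $b_n\in\cH_{r-k}$ equals, up to constants, the bilinear form of this flattening evaluated at the coefficient tensors of $a_n$ and $b_n$. Since $u\perp\uD_r$, this pairing tends to zero along $\cU$ for every bounded $[a_n],[b_n]$.

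Suppose the spectral norm of the flattening did not vanish along $\cU$. Then I would choose, for each $n$, the top singular vectors as $a_n,b_n$. These are symmetrized appropriately, which is the delicate part: $a_n\diamond b_n$ involves the symmetrization of $a\otimes b$, but pairing with a symmetric $f_n$ is unaffected. Doing so produces $[a_n]\diamond[b_n]$ with non-negligible inner product against $u$, a contradiction. So every proper flattening has vanishing spectral norm. The Frobenius norm of the contraction is bounded by spectral norm times Frobenius norm, so all contractions vanish and the fourth moment theorem gives Gaussianity.

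Across different degrees and different primitive components, joint Gaussianity should follow from the multivariate fourth moment theorem (Peccati--Tudor): componentwise Gaussian convergence plus convergence of covariances suffices. The covariances are $\lim_{n\to\cU}\E e_{n,i}e_{n,j}=\delta_{ij}$, and different chaos degrees are exactly orthogonal. Hence $(e_{n,i})_{i\in F}\overset{d}{\toU}(Z_i)_{i\in F}$ for every finite $F$.

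\textbf{Step 2 (orthonormality of Wick products).} I need $\lim_{n\to\cU}\E[\psi_\alpha(e_n)\psi_\beta(e_n)]=\delta_{\alpha\beta}$. The idea is to express $\Pi_d$ of a product of the $e_{n,i}$ as the product minus lower chaos terms. By induction on $\gr(\alpha)$, those lower terms are asymptotically polynomials in lower-degree Wick products. Then $\psi_\alpha(e_n)$ equals $P_\alpha(e_n)$ plus an error that vanishes along $\cU$, where $P_\alpha$ is some fixed polynomial.

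Moments of $P_\alpha(e_n)P_\beta(e_n)$ converge to Gaussian moments. This uses Step 1 plus uniform integrability from hypercontractivity (\cref{lem:hypercontractivity}), since all moments are bounded by $\|\cdot\|_2$. In particular, the inner products converge to $\E[\tilde P_\alpha(Z)\tilde P_\beta(Z)]$ for the limiting polynomials. One must then identify the limit of $P_\alpha$ with the graded Hermite polynomial $\psi_\alpha(Z)$. This follows because projection onto chaos $d$ in the ultraproduct corresponds to orthogonality to all lower graded-degree terms, which matches the graded chaos decomposition of \cref{eq:limit-spaces}.

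Orthonormality plus the density of the spanning set then gives the basis claim.

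\textbf{Step 3 (convergence in law of general elements).} For finite sums, Step 2 gives $(p_{n,k})_k=(Q_k(e_n))_k+o_\cU(1)$ in $L_2$. The continuous mapping theorem and Slutsky's lemma then yield joint convergence in law to $(p_k)_k$. For infinite sums, approximate each $[p_{n,k}]$ in $\uH_{d_k}$ by truncations. The truncation error is small in $L_2$ both along $\cU$ and in the limit, by the orthonormality just proved and Parseval (\cref{eq:correlation}). A standard $\eps/3$ argument on bounded Lipschitz test functions, applied to finitely many $k$ at a time, finishes the proof.

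The main obstacle is Step 1's implication from ``orthogonal to $\uD_r$'' to ``all flattenings have vanishing spectral norm.'' There are two difficulties. One must produce symmetric, bounded witnesses $a_n,b_n$ in the correct chaos. One must also handle the possibility that the spectral norm is large only on a $\cU$-large set; the ultralimit handles that cleanly.
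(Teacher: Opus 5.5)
Your proposal is correct. Steps~1 and~3 match the paper. Step~1 is Lemma~\ref{lem:primitive} (orthogonality to $\uD_r$ forces every proper flattening to have vanishing spectral norm, via singular-vector witnesses), followed by the multivariate fourth moment theorem, Theorem~\ref{thm:fourth-moment}, run along $\cU$. Step~3 is the same truncation argument with bounded Lipschitz test functions.

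The genuine difference is Step~2: identifying the Wick product $h_{n,\alpha}=\frac{1}{\sqrt{\alpha!}}\diamond_i e_{n,i}^{\diamond\alpha_i}$ with the Hermite polynomial $\psi_\alpha(e_n)=\prod_i\psi_{\alpha_i}(e_{n,i})$ in the ultraproduct. This is the paper's Lemma~\ref{lem:wick-hermite}. In the paper $[\psi_\alpha(e_n)]$ denotes the latter object, but your Step~2 yields exactly this identification, so reinterpreting the notation costs nothing.

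The paper argues non-inductively. Since $h_{n,\alpha}$ is the top chaos component of $\psi_\alpha(e_n)$, one has $\E(\psi_\alpha(e_n)-h_{n,\alpha})^2=\E\psi_\alpha(e_n)^2-\E h_{n,\alpha}^2$. Both terms tend to $1$: the first by Gaussian moment convergence, the second by a separate Feynman-diagram computation for top products (Lemma~\ref{lem:combinatorial-variance}, only sketched there).

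You instead induct on graded degree. You control the lower chaos components through the already-established orthonormal bases of $\uH_j$, $j<\gr(\alpha)$, and then identify the limit polynomial by its orthogonality to lower graded chaos. This removes the second diagrammatic computation: everything reduces to joint Gaussianity of finitely many primitives plus hypercontractive uniform integrability. The price is an induction.

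One step is left implicit. The claim that the lower terms are ``asymptotically polynomials'' needs the expansion of $[\Pi_j(\prod_i e_{n,i}^{\alpha_i})]$ in the basis of $\uH_j$ to be finite. This holds because its coefficients are $\E[Z^\alpha\psi_\gamma(Z)]$, which vanish unless $\gamma\le\alpha$ coordinatewise.

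Your route can also be shortened. By the inductive hypothesis and Parseval in $\uH_j$, $\|[\Pi_j\psi_\alpha(e_n)]\|^2=\sum_{\gr(\gamma)=j}(\E[\psi_\alpha(Z)\psi_\gamma(Z)])^2=0$ for each $j<\gr(\alpha)$. Hence $[\psi_\alpha(e_n)]=[\Pi_{\gr(\alpha)}\psi_\alpha(e_n)]=[h_{n,\alpha}]$ directly.
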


Note that the target $(p_k)_{k \in \N}$ of the convergence in distribution
is independent of the choice of representatives $p_{n,k}$ for $[p_{n,k}]$
by Slutsky's lemma applied to the difference element from $N_d$.

\subsubsection{Proof of \texorpdfstring{\cref{thm:regularity-ultralimit}}{orthogonal decomposition}}
\label{sec:tensors}

We use combinatorial arguments on the tensor representations of the polynomials.
For an ordered tuple $\mathbf i=(i_1,\ldots,i_d)$, let
$\alpha(\mathbf i)_j=|\{s:i_s=j\}|$ and set $\He_{i_1\cdots i_d}(X)
  :=\He_{\alpha(\mathbf i)}(X)$.
For $p \in \calH_d(X_1, \dots, X_n)$,
the \emph{Hermite coefficient tensor} $P \in (\R^n)^{\otimes d}_{\sym}$ is the symmetric order-$d$ tensor with dimension $n$ such that,
\begin{equation}\label{eq:coefficient-tensor}
    p(X) = \frac{1}{\sqrt{d!}}\sum_{i_1, \dots, i_d = 1}^n P_{i_1\cdots i_d} \He_{i_1\cdots i_d}(X)\,.
\end{equation}
In Nualart and Peccati's notation, $p$ is realized as a $d$-dimensional multiple Wiener--\Ito integral, $p = \frac{I_d(P)}{\sqrt{d!}}$ \cite{nualart2005central}.
The entry normalization creates an important isometry between $\cH_d(X_1, \dots, X_n)$ and $(\R^n)^{\otimes d}_\sym$:
\begin{equation}\label{eq:isometry}
    \E pq = \ip{P, Q}_F =: \sum_{i_1, \dots, i_d} P_{i_1\cdots i_d}Q_{i_1\cdots i_d}, \qquad \E p^2 = \norm{P}_{F}^2 := \sum_{i_1,\dots, i_d} P_{i_1\cdots i_d}^2\,.
\end{equation}
We call $P \in (\R^n)^{\otimes d}$ a non-symmetric Hermite coefficient tensor
for $p$ if it satisfies \cref{eq:coefficient-tensor}.
If at least one tensor $P,Q$ is symmetric in \eqref{eq:isometry}, the equality continues to hold since the other tensor can be symmetrized without loss of generality.

For $u \in \cH_r(X)$ and $v \in \cH_s(X)$ with Hermite coefficient tensors $U,V$
we observe that there is a non-symmetric Hermite coefficient tensor $W$ for $u \diamond v \in \cH_{r+s}(X)$ given by
\begin{equation}\label{eq:top-product-tensor}
    W = c_{r,s} U \otimes V, \qquad c_{r,s} = \sqrt{\frac{(r+s)!}{r!s!}}\,.
\end{equation}
The factor $c_{r,s}$ accounts for normalization in \eqref{eq:coefficient-tensor}.

Given a tensor $P$ and a partition of the indices $[d]$ into two ordered sets $(L,R)$, the \emph{flattening} $P^{(L,R)} \in \R^{n^{|L|} \times n^{|R|}}$ is the matrix with entries $P^{(L,R)}_{i_L i_R} = P_{i_1\cdots i_d}$ where $i_L = (i_\ell)_{\ell \in L}$ and $i_R = (i_r)_{r \in R}$.
An order-$d$ symmetric tensor has only $d+1$ distinct flattenings.
We write $P^{(k)} \in \R^{n^k \times n^{d-k}}$ for the $k$th flattening.
vec denotes flattening the tensor into a column vector.

\begin{lemma}\label{lem:primitive}
Let $[p_n] \in \uH_d$. Let $P_n$ be the Hermite coefficient tensor of $p_n$. Then
\[
 [p_n]\in\uP_d
 \quad\Longleftrightarrow\quad
 \lim_{n \to\U}\norm{P_n^{(r)}}=0
 \quad\text{for every }0 < r<d.
\]
\end{lemma}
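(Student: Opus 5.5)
The plan is to express the inner product of $p_n$ against a top product $u_n\diamond v_n$ as a bilinear form in a flattening of $P_n$. Take $u_n\in\cH_r(X)$ and $v_n\in\cH_{d-r}(X)$ with coefficient tensors $U_n,V_n$. By \eqref{eq:top-product-tensor} and \eqref{eq:isometry}, using that $P_n$ is symmetric,
\[
  \E\, p_n (u_n\diamond v_n) = c_{r,d-r}\ip{P_n, U_n\otimes V_n}_F = c_{r,d-r}\, \mathrm{vec}(U_n)^\T P_n^{(r)}\, \mathrm{vec}(V_n)\,.
\]
Since $\uD_d$ is the closed span of such products, $[p_n]\in\uP_d$ if and only if $\lim_{n\to\cU}\E\, p_n(u_n\diamond v_n)=0$ for all bounded sequences $u_n,v_n$ of degrees $r$ and $d-r$, for every $0<r<d$. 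The two directions then amount to comparing this quantity with $\norm{P_n^{(r)}}$.

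($\Leftarrow$) We have $|\E\, p_n(u_n\diamond v_n)|\le c_{r,d-r}\norm{P_n^{(r)}}\,\norm{U_n}_F\norm{V_n}_F$, which tends to $0$ along $\cU$. So $[p_n]$ is orthogonal to every generator $u\diamond v$. By linearity and continuity of the inner product it is orthogonal to the closed span $\uD_d$, hence $[p_n]\in\uP_d$.

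($\Rightarrow$) Suppose $\lim_{n\to\cU}\norm{P_n^{(r)}}=\delta>0$. Pick top singular vectors $a_n\in\R^{n^r}$ and $b_n\in\R^{n^{d-r}}$ with $a_n^\T P_n^{(r)} b_n=\norm{P_n^{(r)}}$. These need not be symmetric tensors, and this is the main obstacle. I would handle it by symmetrizing. Let $A_n=\mathrm{sym}(a_n)$ and $B_n=\mathrm{sym}(b_n)$ as tensors of orders $r$ and $d-r$. Because $P_n$ is symmetric, $P_n^{(r)}$ is invariant under permuting the row indices among themselves and the column indices among themselves. Averaging therefore gives
\[
  A_n^\T P_n^{(r)} B_n = a_n^\T P_n^{(r)} b_n\,.
\]
Symmetrization is an orthogonal projection, so $\norm{A_n}_F,\norm{B_n}_F\le 1$.

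Now let $u_n,v_n$ be the polynomials in $\cH_r$ and $\cH_{d-r}$ with coefficient tensors $A_n,B_n$. They have bounded norm, and
\[
  \E\, p_n(u_n\diamond v_n)=c_{r,d-r}\norm{P_n^{(r)}}\to c_{r,d-r}\,\delta\neq0\,.
\]
Thus $[p_n]$ is not orthogonal to $[u_n]\diamond[v_n]\in\uD_d$, so $[p_n]\notin\uP_d$.

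Two points remain to check. First, the sequences are indexed consistently: the definition only fixes the polynomials for each $n$, and the spaces $\cH_d(X_1,\dots,X_n)$ are nested, so $u_n,v_n$ can be taken to live in the same variables. Second, the lift of $\diamond$ agrees with the representatives chosen, which holds by the earlier well-definedness lemma.
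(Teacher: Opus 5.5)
Your proof is correct and follows the paper's argument. Both use the identity $\E\, p_n(u_n\diamond v_n)=c_{r,d-r}\,\mathrm{vec}(U_n)^\T P_n^{(r)}\mathrm{vec}(V_n)$: the operator-norm bound gives ($\Leftarrow$), and testing against top singular vectors gives ($\Rightarrow$). Your symmetrization step is valid but not needed. When $\norm{P_n^{(r)}}>0$, the top singular vectors lie in the column and row spaces of $P_n^{(r)}$, which consist of symmetric tensors, so your remark that they ``need not be symmetric'' is inaccurate in that case; the paper uses this symmetry directly.
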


\begin{proof}
Let $[p_n] \in \uH_d(X),[u_n]\in\uH_r(X),[v_n]\in\uH_{d-r}(X)$ have Hermite coefficient tensors $P_n,U_n,V_n$.
By the tensor isometry \eqref{eq:isometry} and \eqref{eq:top-product-tensor},
\begin{align*}\label{eq:diamond-isometry}
    \E p_n (u_n \diamond v_n) &= c_{r,d-r} \langle P_n, U_n \otimes V_n\rangle = c_{r,d-r} \text{vec}(U_n)^\T P_n^{(r)} \text{vec}(V_n)\\
    |\E p_n (u_n \diamond v_n)| &\leq c_{r,d-r}\norm{P_n^{(r)}} \norm{U_n}_F \norm{V_n}_F = c_{r,d-r}\norm{P_n^{(r)}}\norm{u_n}_2\norm{v_n}_2\,.
\end{align*}

Suppose that $\lim_{n \to \cU} \norm{P_n^{(r)}} = 0$.
Taking ultralimits and using that $\|u_n\|_2, \|v_n\|_2$ are bounded,
then $\ip{[p_n], [u_n \diamond v_n]}_{\uH_d} = \lim_{n \to \cU} \E p_n(u_n \diamond v_n) = 0$.
Therefore, $[p_n] \perp \uD_d$ in this case. 

Conversely, suppose \([p_n]\in\uP_d\) and $\lim_{n \to \cU} \|P_n^{(r)}\| > 0$.
Choose unit-norm singular vectors $\tilde U_n, \tilde V_n$ for each $n$ such that
\[
 (\tilde U_n)^{\T} P_n^{(r)} \tilde V_n = \norm{P_n^{(r)}}\,.
\]
Unflatten $\tilde U_n, \tilde V_n$ into tensors $U_n, V_n$.
Note that since $P_n$ is a symmetric tensor, its nonzero singular vectors also correspond to symmetric tensors.
Then $U_n, V_n$ represent unit-norm \(u_n\in\cH_r(X)\), \(v_n\in\cH_{d-r}(X)\) with,
\[
    \E p_n (u_n \diamond v_n) = c_{r,d-r} \langle P_n, U_n \otimes V_n\rangle = c_{r,d-r}(\tilde U_n)^{\T} P_n^{(r)} \tilde V_n = c_{r,d-r}\norm{P_n^{(r)}}\,.
\]
Taking the ultralimit shows $\ip{[p_n],[u_n]\diamond[v_n]}>0$ contrary to \([p_n]\perp\uD_d\).
\end{proof}

\begin{lemma}
\label{lem:wick-hermite}
For all finitely-supported $\al \in \N^I$,
\[
    \frac{1}{\sqrt{\alpha!}}\underset{i \in I}{\diamond} [e_{n,i}^{\diamond \alpha_i}] = [\psi_\alpha(e_n)]\,.
\]
\end{lemma}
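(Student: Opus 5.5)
The plan is to prove the stronger statement that, for each fixed finitely-supported $\alpha$, the product $\prod_i \He_{\alpha_i}(e_{n,i})$ is asymptotically equal in $L_2$ to its top-degree projection. That projection is $\diamond_i e_{n,i}^{\diamond\alpha_i}$, by associativity of $\diamond$ and because $\Pi_{\gr(\alpha)}$ kills the lower-order terms of each Hermite polynomial. Dividing by $\sqrt{\alpha!}$ then gives the lemma. The only tool is the Gaussian product (contraction) formula. For $p\in\cH_r$, $q\in\cH_s$ with symmetric coefficient tensors $P,Q$, the product $pq$ decomposes into chaos components of degree $r+s-2k$, $0\le k\le \min(r,s)$. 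Up to combinatorial constants depending only on $r,s,k$, the $k$-th component has $L_2$ norm at most $\norm{P\otimes_k Q}_F$, where $\otimes_k$ contracts $k$ indices of $P$ against $k$ indices of $Q$. The key estimate is the matrix bound
\[
\norm{P\otimes_k Q}_F=\norm{(P^{(k)})^{\T}Q^{(k)}}_F\le \norm{P^{(k)}}\,\norm{Q}_F .
\]
By \cref{lem:primitive}, this tends to $0$ along $\cU$ whenever $P=P_n$ is the tensor of a primitive $[p_n]$ and $0<k<r$.

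\emph{Step 1 (one primitive variable).} Fix a unit primitive $[e_n]\in\uP_r$ with tensor $E_n$. I will show $[\He_m(e_n)]=[e_n^{\diamond m}]$ by induction on $m$, using $\He_{m+1}(x)=x\He_m(x)-m\He_{m-1}(x)$. By the induction hypothesis and continuity of $\diamond$ (together with hypercontractivity, which keeps all products bounded in $L_2$), it suffices to expand $e_n\cdot e_n^{\diamond m}$. The tensor of $e_n^{\diamond m}$ is a constant times $\mathrm{sym}(E_n^{\otimes m})$, by \eqref{eq:top-product-tensor}. A $k$-contraction of $E_n$ against $E_n^{\otimes m}$ distributes the $k$ indices among the $m$ copies. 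If some copy receives between $1$ and $r-1$ indices, I first contract $E_n$ with that copy. The key estimate gives Frobenius norm at most $\norm{E_n^{(k_1)}}\to0$, and the remaining contractions are controlled by Cauchy--Schwarz. The only surviving terms are those where all $r$ indices of $E_n$ hit a single copy. These give $\ip{E_n,E_n}=1$ times $E_n^{\otimes(m-1)}$, and after counting multiplicities they should produce exactly $m\,e_n^{\diamond(m-1)}$. Verifying this constant, so that it matches the Hermite recursion, is a bookkeeping step I expect to go through, since degree-$r$ primitives behave like single Gaussians in this computation.

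\emph{Step 2 (several primitive variables).} Next I multiply the one-variable pieces $[\He_{\alpha_i}(e_{n,i})]=[e_{n,i}^{\diamond\alpha_i}]$ across distinct $i$, one factor at a time. In the product of $e_{n,i}^{\diamond a}$ with $e_{n,j}^{\diamond b}$ for $i\ne j$, each lower-order contraction pairs indices of a copy of $E_{n,i}$ with a copy of $E_{n,j}$. If a copy participates partially, the key estimate with $\norm{E_{n,i}^{(k)}}\to0$ kills the term. Otherwise some copy of $E_{n,i}$ is fully contracted against a full copy of $E_{n,j}$. This forces equal degrees and produces the factor $\ip{E_{n,i},E_{n,j}}=\E e_{n,i}e_{n,j}$, which tends to $0$ along $\cU$ because the $[e_{n,i}]$ are orthonormal. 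Hence every lower chaos component of the product vanishes in the ultralimit, and $[\prod_i \He_{\alpha_i}(e_{n,i})]=\diamond_i[e_{n,i}^{\diamond\alpha_i}]$. Dividing by $\sqrt{\alpha!}$ finishes the proof.

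The main obstacle is the combinatorial bookkeeping of contractions when one factor is itself a Wick power, which is a symmetrized tensor product of many copies. Rather than using a full diagram formula, I would always contract the new factor against one copy at a time and bound the rest by Cauchy--Schwarz, so that only the single-matrix estimate above is ever needed. The one exact constant I need to track is the coefficient $m$ in Step 1.
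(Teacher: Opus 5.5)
Your proof is correct, but it takes a different route from the paper. The paper never computes the lower chaos components of $\psi_\alpha(e_n)$. It observes that $h_{n,\alpha}=\frac{1}{\sqrt{\alpha!}}\diamond_i e_{n,i}^{\diamond\alpha_i}$ is exactly $\Pi_{\gr(\alpha)}\psi_\alpha(e_n)$. Pythagoras then gives $\E(\psi_\alpha(e_n)-h_{n,\alpha})^2=\E\psi_\alpha(e_n)^2-\E h_{n,\alpha}^2$, and the paper shows both norms tend to $1$ along $\cU$:
\begin{itemize}
\item the first from asymptotic joint Gaussianity of the primitives (\cref{lem:primitive} plus the fourth moment theorem) together with moment convergence (\cref{lem:moments});
\item the second from the diagrammatic computation in \cref{lem:combinatorial-variance}.
\end{itemize}
You instead show directly that the lower components vanish. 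You use only the product formula, the Hermite recursion, the estimate $\norm{(P^{(k)})^\T Q^{(k)}}_F\le\norm{P^{(k)}}\norm{Q}_F$, and $\E e_{n,i}e_{n,j}\to0$ for the full cross-contractions that survive.

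Your route is self-contained. It needs neither the fourth moment theorem, nor the method of moments, nor \cref{lem:combinatorial-variance}, which the paper only sketches. It also makes clear that only two facts are used: vanishing flattenings and orthonormality. The paper's norm comparison instead avoids all tensor bookkeeping, because it reuses Gaussian-limit machinery it needs anyway for orthonormality in \cref{thm:regularity-ultralimit}.

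The constant you deferred does close. In the $k=r$ term the product-formula coefficient is $r!\binom{rm}{r}$. The fraction of placements in the symmetrization of $E_n^{\otimes m}$ where the $r$ contracted slots form one full copy is $m/\binom{rm}{r}$. With the normalization $e_n=I_r(E_n)/\sqrt{r!}$, the powers of $r!$ cancel and you get exactly $m\,e_n^{\diamond(m-1)}$.

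Two small remarks:
\begin{itemize}
\item For $0<k<r$ in Step 1, the key estimate applies directly to $E_n$ contracted against the whole Wick power. No splitting into copies is needed there.
\item In Step 2, the bipartite network between old and new copies can contain cycles. The ``one copy at a time'' bound should therefore be run as in \cref{lem:moment} and \cref{lem:tensor-frob}: contract the offending pair first using the operator norm, then bound everything else by Frobenius norms. This works without change.
\end{itemize}
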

\begin{proof}
Let $h_{n,\alpha} = \frac{1}{\sqrt{\alpha!}}\underset{i \in I}{\diamond} e_{n,i}^{\diamond \alpha_i}$.
We have,
\begin{align}\label{eq:var1}
    &\quad \E(\psi_\alpha(e_n) - h_{n,\alpha})^2 = \E \psi_\alpha(e_n)^2 - 2\E \psi_\alpha(e_n) h_{n,\alpha} + \E h_{n,\alpha}^2 = \E \psi_\alpha(e_n)^2 - \E h_{n,\alpha}^2\,.
\end{align}
The last equality holds because $h_{n,\alpha} \in \cH_{\gr(\alpha)}(X)$ exactly,
so it is orthogonal to lower-degree terms in $\psi_\alpha(e_n)$,
and the unique leading monomial of $\psi_\alpha(e_n)$ is the same as that of $h_{n,\alpha}$.

By \cref{lem:primitive}, the norm of every proper flattening of every $e_{n,i}$ vanishes.
By the fourth moment theorem (\cref{thm:fourth-moment}), the $e_{n,i}$ supported in $\alpha$ are asymptotically jointly Gaussian (note that the proof of \cref{thm:fourth-moment} still holds when all convergence is along $\cU$).
Then we have the convergence of the variance $\E \psi_\alpha(e_n)^2 \underset{n \to \cU}{\longrightarrow} 1$ by \cref{lem:moments}.
We have $\E h_{n,\alpha}^2 \toU 1$ by the combinatorial moment calculation in \cref{lem:combinatorial-variance}.
Therefore, \cref{eq:var1} goes to 0 along $\cU$ which proves the claim.
\end{proof}

\begin{proof}[Proof of \cref{thm:regularity-ultralimit}.]
    We observed above that a spanning set for $\uH_d$ is
    \[
        \Big\{\frac{1}{\sqrt{\alpha!}}\underset{i \in I}{\diamond} [e_{n,i}^{\diamond \alpha_i}] : \al \in \N^I, \gr(\alpha) = d\Big\}\,.
    \]
    By \cref{lem:wick-hermite}, these equal $\{[\psi_\alpha(e_n)] : \alpha \in \N^I, \gr(\alpha) = d\}$.
    The Gaussian convergence in \cref{thm:fourth-moment} implies the $[\psi_\alpha(e_n)]$ are
    orthonormal in $\uH_d$ like the usual Hermite basis.

    For the second part of the theorem,
    if the basis representation of $[p_{n,k}]$ is finite,
    then convergence in distribution follows from \cref{thm:fourth-moment} and the continuous mapping theorem.
    It remains to handle the case where the basis representation of $[p_{n,k}]$ has countably many nonzero terms. The expression for $p_k$ has $\E p_k^2 < \infty$ (hence it is well-defined in $L_2(Z)$) because, by orthonormality in $\grH_{d_k}(Z)$ and $\uH_{d_k}$:
    \begin{equation}\label{eq:ul-1}
        \E p_k^2 = \sum_{\alpha \in \N^I: \gr(\alpha) = d_k} c_{k,\alpha}^2 = \lim_{n \to \cU} \E p_{n,k}^2 < \infty\,.
    \end{equation}
    Fix $\eps > 0$ and a finite set $F_k$ such that $\sum_{\alpha \notin F_k}c_{k,\alpha}^2 \leq \eps$ and define the truncated approximations,
    \[
        [q_{n,k}] = \sum_{\alpha \in F_k} c_{k,\alpha} [\psi_\alpha(e_n)], \qquad q_k = \sum_{\alpha \in F_k} c_{k,\alpha} \psi_\alpha(Z)\,.
    \]
    We have simultaneously:
    \begin{align}\label{eq:eps1}
        \E (p_k - q_k)^2 = \sum_{\alpha \notin F_k}c_{k,\alpha}^2 \leq \eps \qquad \qquad
        \lim_{n \to \cU} \E(p_{n,k} - q_{n,k})^2  = \sum_{\alpha \notin F_k}c_{k,\alpha}^2  \leq \eps
    \end{align}
    Furthermore, since $F_k$ is finite, we have $(q_{n,1}, \dots, q_{n,k}) \overset{d}{\toU} (q_1, \dots, q_k)$ by the finite case.

    To prove $(p_{n,k})_{k \in \N} \overset{d}{\toU} (p_k)_{k \in \N}$ it is equivalent to prove $(p_{n,1},\dots, p_{n,k}) \overset{d}{\toU} (p_1, \dots, p_k)$ for each $k$.
    Fix a bounded 1-Lipschitz test function $\varphi : \R^k\to\R$.
    Let $P_n = (p_{n,1}, \dots, p_{n,k}), P = (p_1, \dots, p_k), Q_n = (q_{n,1}, \dots, q_{n,k}), Q = (q_1, \dots, q_k)$.
    Non-asymptotically,
    \begin{align*}
        & \quad |\E\varphi(P_n) - \E \varphi(P)|\\
        &\leq | \E \varphi(P_n) - \E\varphi(Q_n)| + |\E \varphi(Q_n) - \E\varphi(Q)| + |\E \varphi(Q) - \E\varphi(P)|\\
        &\leq \left(\sum_{i=1}^k \E(p_{n,i} - q_{n,i})^2\right)^{1/2} + |\E \varphi(Q_n) - \E\varphi(Q)| + \left(\sum_{i=1}^k\E (p_i - q_i)^2\right)^{1/2} & (\text{1-Lipschitz})
    \end{align*}
    The right term is at most $\sqrt{k\eps}$ by \cref{eq:eps1}.
    In the limit $n \to \cU$,
    the left term is at most $\sqrt{k\eps}$ and the middle term converges to 0 since $Q_n \overset{d}{\toU} Q$.
    We conclude, for every $\eps > 0$,
    \[
        \lim_{n \to \cU} |\E\varphi(P_n) - \E \varphi(P)| \leq 2\sqrt{k\eps}\,.
    \]

    Take $\eps \downarrow 0$ to get $\lim_{n \to \cU} |\E\varphi(P_n) - \E \varphi(P)| = 0$. Bounded 1-Lipschitz functions determine weak convergence by the Portmanteau theorem, so $P_n \overset{d}{\toU} P$.
\end{proof}

\subsection{Proof of regularity lemmas}

Let $p \in \cH_d(X), \E p^2 \leq 1$ with Hermite coefficient tensor $P$. The idea of the regularity lemma is, if a flattening $P^{(r)}$ has a large singular value $\sigma$, then write
\begin{align*}
    P^{(r)} &= \sig U^{(r)} (V^{(d-r)})^\T + P^{(r)}_{\text{rem}}, \qquad\quad\qquad (U^{(r)} \in \R^{n^r}, V^{(d-r)} \in \R^{n^{d-r}})\\
    p &= \frac{\sig}{c_{r,d-r}} u\diamond v + p_{\text{rem}}, \qquad\qquad \qquad\quad (u \in \cH_{\le r}(X), v \in \cH_{\le d-r}(X))\,.
\end{align*}
$U^{(r)}$ and $V^{(d-r)}$ are the (flattenings of the) Hermite coefficient tensors for lower-degree polynomials $u$ and $v$.
Since $u$ and $v$ are lower degree, they can be decomposed by induction.

On the other hand, removing the singular direction decreases the squared norm,\footnote{The first inequality is not an equality because the Hermite coefficient tensor of $p_{\text{rem}}$ is not $P_{\text{rem}}$ but the symmetrization of $P_{\text{rem}}$. Symmetrization cannot increase the Frobenius norm.}
\[
    \E p_{\text{rem}}^2 \leq \|P_{\text{rem}}^{(r)}\|^2_F = \|P^{(r)}\|_F^2 - \sigma^2 = \|P\|_F^2 - \sigma^2 = \E p^2 - \sigma^2\,.
\]
Since initially $\E p^2 \leq 1$, after removing at most $1/\eps^2$ singular directions, all proper flattenings have spectral norm at most $\eps$.
Once all proper flattenings are small, by \cref{thm:fourth-moment}, the remaining polynomial is Gaussian up to error depending on $\eps$.

The ultrapower is essentially carrying out this approach implicitly, without having to worry about the recursion or the quantitative parameters.
The energy measure $\E p^2 = \|P\|_F^2$ and the recursive decomposition are replaced by orthogonal decomposition in the Hilbert space $\uH_d$.
We now use this approach to prove the regularity lemmas.

\begin{theorem}[Regularity lemma, compactification version]
\label{thm:regularity-lemma-compactness}
    Let $Z$ be a set of graded variables with countably many variables of each grade.
    Let $d_k \in \N$ for $k \in \N$.
    Let $p_{n,k} \in \grH_{d_k}(Z)$ such that $\sup_n \E p_{n,k}^2 < \infty$ for each $k \in \N$.
    Then there exist $p_k \in \grH_{d_k}(Z)$ and a subsequence of $n$ such that $(p_{n,k})_{k \in \N} \xrightarrow{d}(p_k)_{k \in \N}$ along the subsequence.
    Consequently, $\{\Law(p_1, \dots, p_k): p_i \in \grH_{d_i}(Z), \E p_i^2 \leq 1\}$ is weakly compact for each $k$.
\end{theorem}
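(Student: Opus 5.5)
The plan is to reduce to the ultrapower version, \cref{thm:regularity-ultralimit}, by first replacing the graded inputs $p_{n,k}\in\grH_{d_k}(Z)$ by genuine Gaussian polynomials. Concretely, I apply \cref{lem:approximability} separately for each fixed $n$ to the family $(p_{n,k})_{k}$. This yields $q_{n,k}\in\cH_{d_k}(X_1,\dots,X_{N})$ with $(q_{n,k})_k$ close in distribution to $(p_{n,k})_k$. I choose $N=N(n)$ large enough that $d_{\mathrm{BL}}\bigl((q_{n,1},\dots,q_{n,n}),(p_{n,1},\dots,p_{n,n})\bigr)\le 1/n$ in the bounded-Lipschitz metric. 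Since $(q_{n,k})_k$ converges in law to $(p_{n,k})_k$ for each fixed $n$, such an $N(n)$ exists. I also want $\E q_{n,k}^2\le \E p_{n,k}^2+1$. This holds for the construction in \cref{lem:approximability}, since there $\E q^2\to\E p^2$ after finite truncation, which is visible from \cref{eq:multilinear-apx}. Reindexing and padding with zero terms, and using $\cH_d(X_1,\dots,X_N)\subseteq\cH_d(X_1,\dots,X_{N'})$ for $N\le N'$, I may assume $N(n)$ is increasing. Then $(q_{n,k})_n$, padded with zeros at unused indices, defines an element $[q_{n,k}]\in\uH_{d_k}$ with bounded norms.

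Next I apply \cref{thm:regularity-ultralimit} to the family $[q_{n,k}]$, $k\in\N$. It produces $p_k\in\grH_{d_k}(Z')$, where $Z'$ has grade-$d$ variables indexed by $I_d$, such that $(q_{n,k})_k\overset{d}{\toU}(p_k)_k$. The index sets $I_d$ are uncountable, which is a subtle point. However, each $p_k$ involves only countably many basis elements $\psi_\alpha$ (those with $c_{k,\alpha}\neq 0$), and over all $k$ this is still countably many. So only countably many $Z_i$ of each grade appear. Relabelling them injectively into the countable graded family $Z$ gives $p_k\in\grH_{d_k}(Z)$ with the same joint law.

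I then exit the ultraproduct. Convergence in distribution of $(q_{n,k})_{k}$ is convergence in the product topology on $\R^\N$, and this is metrizable, for instance by $\sum_k 2^{-k} d_{\mathrm{BL}}$ of the first $k$ marginals. So \cref{lem:ultralimits}(iii) gives a subsequence along which $(q_{n,k})_k\xrightarrow{d}(p_k)_k$. Along that subsequence, the $1/n$ bounded-Lipschitz closeness on the first $n$ coordinates transfers the convergence to $(p_{n,k})_k$. This proves the main claim.

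For compactness, take any sequence of tuples $(p_{n,1},\dots,p_{n,k})$ with $\E p_{n,i}^2\le1$. The main claim extracts a subsequence converging in law to some $(p_1,\dots,p_k)$ with $p_i\in\grH_{d_i}(Z)$. It remains to check $\E p_i^2\le 1$. This follows from \cref{eq:ul-1}, since $\E p_i^2=\lim_{n\to\cU}\E q_{n,i}^2$. To make this at most $1$ rather than $2$, I should demand in the first step that $\E q_{n,k}^2\le \E p_{n,k}^2+1/n$. Since the space of laws is metrizable, sequential compactness gives weak compactness. The main obstacle is the bookkeeping in the first step: the diagonal choice of $N(n)$ must simultaneously control the distributional error and the second moments. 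Transporting from the uncountable index set $I$ to the countable $Z$ is easy but has to be stated explicitly.
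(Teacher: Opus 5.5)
Apart from one bookkeeping step, your route is the paper's proof. You approximate the graded family by ordinary polynomials via \cref{lem:approximability} with $1/n$ bounded-Lipschitz error on the first $n$ coordinates. You then apply \cref{thm:regularity-ultralimit}, relabel the countably many primitives that actually occur into $Z$, and leave the ultraproduct using metrizability and \cref{lem:ultralimits}(iii). Your bound $\E p_i^2\le 1$ via \eqref{eq:ul-1} with the $1/n$ slack is an acceptable substitute for the paper's Portmanteau argument. The step that fails as written is placing $q_{n,k}$ in slot $N(n)$ of the ultraproduct sequence and \emph{zeros} in every other slot.

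The ultrafilter $\cU$ is fixed in advance, and $A=\{N(n):n\in\N\}$ need not belong to it. For example, suppose $\cU$ extends the filter of sets of asymptotic density one. Then no density-zero set is $\cU$-large. Your $A$ has density zero as soon as $N(n)\ge n^2$, and you cannot rule that out, because $N(n)$ must exceed the number of variables the approximants use. When $A\notin\cU$, the following happens:
\begin{enumerate}[(a)]
    \item The padded sequence vanishes on the $\cU$-large set $\N\setminus A$, so $[q_{n,k}]=0$ in $\uH_{d_k}$.
    \item \cref{thm:regularity-ultralimit} then returns $p_k=0$.
    \item \cref{lem:ultralimits}(iii) extracts a subsequence of padding slots. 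There is no $p_{n,k}$ at those slots, so the $1/n$ closeness has nothing to transfer the convergence to.
\end{enumerate}

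Either of two repairs works. You can pad by \emph{repetition}: put $q_{n,k}$ in every slot $m$ with $N(n)\le m<N(n+1)$, and zeros in the finitely many slots $m<N(1)$. Then any subsequence extracted from a $\cU$-large set has block labels $n(m)\to\infty$. You can thin it to a genuine subsequence of $n$, with error $1/n(m)\to 0$. Alternatively, nothing in the construction of $\uH_d$ or the proof of \cref{thm:regularity-ultralimit} uses that the $n$th term depends only on $X_1,\dots,X_n$. The paper relies on this implicitly when it asserts the reduction to $p_{n,k}\in\cH_{d_k}(X_1,\dots,X_n)$. So the $n$th term may simply use $N(n)$ variables, and no padding is needed.
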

\begin{proof}
First we show that, without loss of generality, we may assume $p_{n,k}$
are ordinary Gaussian polynomials (all variables have grade one).
By \cref{lem:approximability} applied to $(p_{n,1},\dots, p_{n,n})$,
also using that bounded 1-Lipschitz test functions determine convergence in distribution by the Portmanteau theorem,
there exist ordinary polynomials $q_{n,k}$
such that, for all $k \leq n$,
\begin{align*}
    &\left|\E \varphi(p_{n,1}, \dots, p_{n,k})  - \E\varphi(q_{n,1}, \dots, q_{n,k})\right| \leq \frac{1}{n}
\end{align*}
for all bounded 1-Lipschitz functions $\varphi : \R^k \to \R$.
So, if a subsequence of $q_{n,k}$ converges in distribution, then the same subsequence of $p_{n,k}$ converges in distribution to the same limit.
By \cref{lem:moments}, the approximants $q_{n,i}$ also satisfy $\sup_n \E q_{n,i}^2 < \infty$.
Hence, it suffices to assume $p_{n,k} \in \cH_{d_k}(X_1, \dots, X_n)$ are
Gaussian polynomials such that $\sup_n \E p_{n,k}^2 < \infty$.

Applying \cref{thm:regularity-ultralimit}, there exist
graded Gaussian polynomials $p_k \in \grH_{d_k}(Z)$ such that
\begin{equation}\label{eq:up-1}
    (p_{n,k})_{k \in \N}
    \overset{d}{\toU}(p_k)_{k \in \N}
\end{equation}
Each $p_k$ is a countable or finite sum of Hermite polynomials in the $Z$ variables,
so the entire family $(p_k)_{k\in\N}$ can be represented using at most countably
many graded Gaussian variables.

The weak topology on probability measures on
$\R^\N$ is metrizable (by the sup over bounded 1-Lipschitz functions weighted by $2^{-i}$ in coordinate $i$),
so we may use \cref{lem:ultralimits}(iii) to extract a subsequence
such that \cref{eq:up-1} holds ordinarily.

For the final assertion, apply this result to any sequence
of admissible $(p_{n,1},\dots, p_{n,k})$. The Portmanteau theorem gives, along the extracted subsequence,
\[
    \E p_i^2
    \le \liminf_{n\to\infty}\E p_{n,i}^2
    \le1.
\]
Hence $p_i$ is also admissible, and the set of admissible laws is compact.
\end{proof}

\begin{theorem}[Regularity lemma, approximation version]\label{thm:regularity-approximation}
    For every $k,D \in \N$, $\eps > 0$ there exists $M = M(k,D,\eps) \in \N$ such that the following holds.
    Let $d_1, \dots, d_k \leq D$ (repeats allowed).
    Let $p_i \in \mathcal{H}_{d_i}(X)$ such that $\E p_i^2 \leq 1$ for all $i=1,\dots, k$.
    Then there exist $m \leq M, g_a \leq D$ and $Z_a \in \calH_{g_a}(X)$ for $a \in [m]$, and $q_i \in \grH_{d_i}(Z_1, \dots, Z_m)$ where $Z_a$ has grade $g_a$, such that:
    \begin{enumerate}[(i)]
        \item $\sum_{i = 1}^k \E (p_i - q_i)^2 \leq \eps$
        \item $W_2((Z_1, \dots, Z_{m}), \calN(0,1)^{\otimes m}) \leq \eps$
    \end{enumerate}
\end{theorem}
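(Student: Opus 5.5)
We argue by contradiction and compactness, using the ultrapower version (\cref{thm:regularity-ultralimit}) to produce the finite structure and \cref{lem:ultralimits}(iii) to return to an ordinary sequence. Fix $k, D, \eps$ and suppose no $M$ works. Then for each $n$ there are degrees $d_{n,1},\dots,d_{n,k} \le D$ and polynomials $p_{n,i} \in \cH_{d_{n,i}}(X)$ with $\E p_{n,i}^2 \le 1$ that admit no valid decomposition with $m \le n$. Each $p_{n,i}$ depends on only finitely many variables, so after relabeling and reindexing we may take $p_{n,i} \in \cH_{d_{n,i}}(X_1,\dots,X_{N_n})$. Since the degree tuples range over a finite set, one tuple $(d_1,\dots,d_k)$ occurs on a $\cU$-large set. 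Replacing $p_{n,i}$ by $0$ off this set and padding indices, we obtain elements $[p_{n,i}] \in \uH_{d_i}$.

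Next we expand each $[p_{n,i}]$ in the orthonormal basis $\{[\psi_\alpha(e_n)] : \gr(\alpha) = d_i\}$ of \cref{thm:regularity-ultralimit}, with coefficients $c_{i,\alpha}$ satisfying $\sum_\alpha c_{i,\alpha}^2 \le 1$. Choose finite sets $F_i$ with tail mass $\sum_{\alpha \notin F_i} c_{i,\alpha}^2 \le \eps/(2k)$. The finitely many primitive indices $j \in I$ appearing in $\bigcup_i F_i$ have grades $g_j \le D$; enumerate them as $a = 1,\dots,m$, and set $Z_{n,a} = e_{n,j_a} \in \cH_{g_a}(X)$. Define
\[
q_{n,i} = \sum_{\alpha \in F_i} c_{i,\alpha}\,\psi_\alpha(Z_{n,1},\dots,Z_{n,m}) \in \grH_{d_i}(Z_1,\dots,Z_m).
\]
By orthonormality in $\uH_{d_i}$ we have $\lim_{n\to\cU} \sum_i \E(p_{n,i} - q_{n,i})^2 \le \eps/2$. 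Moreover, $Z_n \overset{d}{\toU} \calN(0,1)^{\otimes m}$ by the fourth moment theorem (\cref{thm:fourth-moment}, applied as in \cref{lem:wick-hermite}, since $[e_{n,j}]$ are orthonormal primitive elements).

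The step needing care is upgrading this convergence in distribution to $W_2$. We have $\E Z_{n,a}^2 = 1$ exactly, and hypercontractivity (\cref{lem:hypercontractivity}) gives uniformly bounded fourth moments for polynomials of degree at most $D$. Hence the squares $|Z_n|^2$ are uniformly integrable, and convergence in distribution together with convergence of second moments yields $W_2(Z_n, \calN(0,1)^{\otimes m}) \toU 0$. Alternatively, one can invoke the Wasserstein upgrade referenced in \cref{sec:hypercontractivity}.

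Combining the two estimates, the set of $n$ with $\sum_i \E(p_{n,i}-q_{n,i})^2 \le \eps$ and $W_2 \le \eps$ is $\cU$-large. Intersecting with the large set where the tuple is $(d_1,\dots,d_k)$ and with $\{n \ge m\}$ gives a nonempty set; any $n$ in it has a valid decomposition with $m \le n$ variables, which is a contradiction. Note that $m$ depends on the sequence and is not chosen in advance, which is harmless inside the contradiction argument. The main subtlety is that $q_{n,i}$ must be an exact polynomial in $Z_{n,a}$, which the construction provides, while the approximation error is only asymptotic. This is why we argue along $\cU$ rather than trying to extract explicit bounds on $M$.
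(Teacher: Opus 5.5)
Your proposal follows essentially the same route as the paper's proof. You argue by contradiction, embed the counterexamples in $\uH_{d_i}$, and expand them in the basis $[\psi_\alpha(e_n)]$ of \cref{thm:regularity-ultralimit}. You then truncate to finite sets $F_i$, take the finitely many primitive elements as the $Z_a$, and upgrade convergence in distribution to $W_2$ to obtain a $\cU$-large set of good $n$. Fixing the degree tuple on a $\cU$-large set, and picking a single $n \ge m$ instead of extracting a subsequence, are tidy refinements of the same argument.

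There is one slip: you assert that each $p_{n,i}$ depends on only finitely many variables. $\cH_d(X)$ is a closed span, so this fails when $X$ is infinite. The paper handles this with its $4\eps$ versus $\eps$ slack, and you can do the same. First truncate each Hermite expansion to $\tilde p_{n,i}$ with $\sum_i \E(p_{n,i}-\tilde p_{n,i})^2 \le \eps/4$; truncation keeps the degree and does not increase the norm. Then run your argument on the $\tilde p_{n,i}$ with target error $\eps/4$. The triangle inequality
\[
\Bigl(\sum_i \E(p_i-q_i)^2\Bigr)^{1/2} \le \Bigl(\sum_i \E(p_i-\tilde p_i)^2\Bigr)^{1/2} + \Bigl(\sum_i \E(\tilde p_i-q_i)^2\Bigr)^{1/2}
\]
then restores the bound $\eps$ for the original polynomials.
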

\begin{proof}
Suppose the assertion fails for some $k,D,\eps$.
Then for every $M$, there is a choice of polynomials which require at least $M$ variables to approximate
within error $4\eps$.
Construct a sequence $p_{n,i} \in \cH_{d_i}(X), \E p_{n,i}^2 \leq 1$ such that the number of Gaussians required to approximate them within error $4\eps$ grows with $n$.
We will use the ultraproduct to approximate
an infinite subsequence using a constant number of variables, yielding the contradiction.

Strictly speaking, the $p_{n,i}$ may not be polynomials since $X$ could be infinite.
For each one, truncate the Hermite expansion to a finite approximant within $\eps$ error.
Then we obtain a sequence of bona fide polynomials $p_{n,i} \in \cH_{d_i}(X_1, \dots, X_n), \E p_{n,i}^2 \leq 1$ such that the number of Gaussians required to approximate them within error $\eps$ grows with $n$.
Embed $[p_{n,i}]$ into $\uH_{d_i}$.

By \cref{thm:regularity-ultralimit}, write the basis representation
\[
    [p_{n,i}]
    =
    \sum_{\gr(\alpha)=d_i}
        c_{i,\alpha}[\psi_\alpha(e_n)].
\]
Choose finite sets $F_i$ of multi-indices of grade $d_i$
such that $\sum_{i=1}^k
    \sum_{\substack{\alpha\notin F_i}}
        c_{i,\alpha}^2
    <\eps/2$.
List the finitely many primitive elements $[e_{n,1}], \dots, [e_{n,m}]$ appearing
in the union of the $F_i$ and for each one select a representative sequence
of polynomials, written $Z_{n,a} \in \cH_{g_a}(X_1, \dots, X_n)$.

Define the graded-degree-$d_i$ polynomial
$q_i(Z_1, \dots, Z_m) = \sum_{\alpha \in F_i} c_{i,\alpha}\psi_\alpha(Z)$.
By asymptotic orthonormality of the $\psi_\alpha(Z)$ in the ultraproduct,
\begin{equation}\label{eq:l2}
    \lim_{n\to\cU}
    \sum_{i=1}^k
       \E\bigl(p_{n,i}
          - q_i(Z_{n,1},\dots,Z_{n,m})\bigr)^2
    =
    \sum_{i=1}^k
    \sum_{\substack{\gr(\alpha)=d_i\\\alpha\notin F_i}}
       c_{i,\alpha}^2
    <\eps/2.
\end{equation}
By \cref{lem:ultralimits}(i), inequality \eqref{eq:l2} holds
with error $\eps$ for all $n$ in a $\cU$-large set. Call this set $A$.

The second part of \cref{thm:regularity-ultralimit} gives $(Z_{n,1}, \dots, Z_{n,m}) \overset{d}{\toU}\calN(0,1)^{\otimes m}$.
We claim that the following set $B$ is also $\cU$-large,
\[
    B =\{n:W_2((Z_{n,1},\dots, Z_{n,m}),\calN(0,1)^{\otimes m})<\eps\}\,.
\]
Otherwise, $\N\setminus B \in\cU$.
By \cref{lem:ultralimits}(iii)
there is a subsequence of $n \in \N\setminus B$
such that $(Z_{n,1}, \dots,Z_{n,m})\xrightarrow{d}\calN(0,1)^{\otimes m}$
along the subsequence.
By \cref{lem:moments}, this also implies convergence in $W_2$,
contradicting the definition of $B$.
Thus $B \in \cU$.

Then $A \cap B$ is also $\cU$-large. Use \cref{lem:ultralimits}(iii) to obtain a  subsequence of $p_{n,i}$ which are approximated up to error $\eps$ by at most $m$ variables,
the desired contradiction.
\end{proof}

\begin{remark}
    The proof of \cref{thm:regularity-approximation} gives a nonconstructive bound on $M(k,D,\eps)$. {GPT-6} Astra asserts that the strategy described at the start of this section can prove an Ackermann-type recursion for $M(k,D,\eps)$, for example, $M(k,2,\eps) \leq Ck^4\eps^{-3}(k+1)^{Ck/\eps}$, a tower-type bound for $D = 3$, and larger bounds for $D>3$. We have not attempted to verify the proof.
\end{remark}

\section{Application: robust Borell inequality}
\label{sec:borell}

\renewcommand{\Stab}{\operatorname{Stab}}

For a parameter $s \in [-1,1]$, the Ornstein--Uhlenbeck noise operator $T_s$ is defined on $L_2(X)$ by
\[ (T_s f)(X) = \E[f(Y) \mid X], \qquad (X,Y) \sim s\text{-correlated Gaussians}\,.\]
Let $\Stab_s(f) = \ip{f, T_s f} = \E_{(X,Y) \sim s\text{-correlated}} f(X)f(Y)$ denote the noise stability of $f$.
For a set $A \subseteq \R^n$ we also write $\Stab_s(A) = \Stab_s(1_A)$.
A \emph{halfspace} is a set of the form $H = \{X : \sum_i c_i X_i \geq c_0\}$
for some coefficients $c_i$ normalized to $\sum_i c_i^2 = 1$.

\begin{theorem}[Robust Borell inequality]
\label{thm:borell-stability}
    For $a \in (0,1)$ let $H_a = (-\infty, \Phi^{-1}(a)) \subseteq \R$.
    For all $a \in (0,1), s \in (0,1), \eps > 0$ there exists $\delta = \delta(a,s, \eps) > 0$ such that:
    if $A \subseteq \R^n, \gam(A) = a$ satisfies $\Stab_s(A) \geq \Stab_s(H_a) - \delta$ then
    there exists a halfspace $H \subseteq \R^n, \gam_n(H) = a$ such that
    $\gam_n(A \symdiff H) \leq \eps$.
\end{theorem}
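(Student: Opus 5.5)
We argue by contradiction and compactness, following the three-step strategy from the introduction. Suppose the statement fails for some $a,s,\eps$. Then there are sets $A_n\subseteq\R^{m_n}$ with $\gam(A_n)=a$ and $\Stab_s(A_n)\ge \Stab_s(H_a)-1/n$, but $\gam(A_n\symdiff H)>\eps$ for every halfspace $H$ of measure $a$. Write $f_n=1_{A_n}$.

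The first step is to pass to bounded degree. Since $\Stab_s(f)=\sum_d s^d W_d(f)$ and $\sum_d W_d(f_n)=a$, the contribution of degrees above $D$ is at most $s^{D+1}a$. This is below any prescribed $\eta$ once $D=D(s,\eta)$ is large. So the truncations $g_n=\Pi_{\le D}f_n$ capture the stability up to $\eta$.

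The second step takes a limit. We apply \cref{thm:regularity-lemma-compactness} to the finitely many components $\Pi_j f_n\in\cH_j(X)$, $j\le D$; these are genuine elements of $\cH_j$ after truncating the variables with arbitrarily small error. Along a subsequence we obtain graded limits $h_j\in\grH_j(Z)$ with joint convergence in distribution. Through \cref{thm:regularity-ultralimit} we also get convergence of all Gram inner products: $\E h_j^2=\lim W_j(f_n)$, by \cref{eq:ul-1}, since $[\Pi_jf_n]$ has the basis expansion defining $h_j$.

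To keep the Boolean structure we work with the full ultraproduct of the bounded functions $f_n$. Concretely, we let $D\to\infty$ diagonally, so that the graded polynomials $h_{\le D}=\sum_{j\le D}h_j$ are consistent across $D$ and increase to some $h\in L_2(Z)$. Its graded weights are $W_j(h)=\lim_n W_j(f_n)$. Convergence in distribution of $g_n$ together with $|f_n-g_n|_2$ small should then give that $h$ is $\{0,1\}$-valued with $\E h=a$. This argument goes through the joint law of $(g_n,f_n)$, using that $f_n^2=f_n$ passes to limits of test functions. Finally, by \cref{eq:correlation}, the graded noise stability satisfies $\Stab_s(h)=\sum_j s^jW_j(h)=\lim\Stab_s(f_n)\ge\Stab_s(H_a)$.

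The third step identifies the maximizer and derives the contradiction. On graded space, the grade-$r$ noise operator at parameter $s$ is the ordinary OU operator with correlation $s^r\le s$ applied to a grade-$r$ variable. Hence $h=1_B$ for a set $B$ in the product Gaussian space, and its stability is a mixture of correlations $s^r$. Borell's inequality, applied coordinatewise via the monotonicity of stability in correlation, gives $\Stab_s(1_B)\le\Stab_s(H_a)$. Equality forces $B$ to be a halfspace depending only on grade-one directions, by the equality case of Borell. In particular $W_1(h)=W_1(1_{H_a})$ is the full degree-one weight of a halfspace, namely $\varphi(\Phi^{-1}(a))^2$.

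Therefore $\lim_n\|\Pi_1 f_n\|_2^2=\varphi(\Phi^{-1}(a))^2$, with $\Pi_1 f_n=\langle c_n,X\rangle$. A standard argument then says that a set of measure $a$ whose first-degree weight equals the maximal value is close to the halfspace $H_n=\{\langle c_n/|c_n|,X\rangle\ \ge\ \text{or}\ \le\ \text{threshold}\}$ of measure $a$. This follows because $\E[1_{A}\langle u,X\rangle]$ is uniquely maximized by that halfspace, with a quantitative deficit in $\gam(A\symdiff H)$ via a rearrangement in the single direction $u$. This contradicts the choice of $A_n$.

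I expect the main obstacle to be the second step: justifying that the limit object is a genuine indicator function, since the regularity lemma is stated only for bounded-degree polynomials. A second difficulty is the equality case of Borell on graded space. If passing to the indicator limit proves awkward, I would fall back on working only with the weights $W_j(f_n)$. The aim would then be to show directly that near-optimal stability forces $W_1(f_n)\to\varphi(\Phi^{-1}(a))^2$. This could use the limit $h_{\le D}$ together with a Borell-type bound for $[0,1]$-valued functions, namely $\Stab_s(\E[h\mid\cdot])$ for the limit conditional expectation.
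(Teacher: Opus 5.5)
\textbf{Gap: the limit need not be an indicator.} The problem is in your second step, and the conclusion it aims at is false in general. Your truncation only controls the damped tail, $\sum_{d>D}s^dW_d(f_n)\le s^{D+1}a$. It gives no control of $\|f_n-g_n\|_2^2=\sum_{d>D}W_d(f_n)$, which need not be small uniformly in $n$, because Hermite weight can escape to arbitrarily high degree.

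Your step never uses near-optimality, so it would apply equally to $A_n=\{\prod_{i\le n}X_i>0\}$. There $\Pi_j1_{A_n}=0$ for $0<j<n$, so the limit is $h\equiv 1/2$, not an indicator. Passing to the joint law of $(g_n,f_n)$ does not rescue this: when $\|f_n-g_n\|_2\not\to0$, the weak limit of $f_n$ need not coincide with $h$.

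So the limit object is in general only $[0,1]$-valued, and even that needs an argument. Your third step applies Borell's equality case to $h=1_B$, so it is unjustified as written. That a near-maximizing sequence loses no mass is true only a posteriori.

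\textbf{What is missing.} Two ingredients are needed, and both appear in the paper's argument.

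First, prove $0\le h\le 1$. The noise operator damps the tail uniformly, $\|T_sf_n-\sum_{k\le D}s^k\Pi_kf_n\|_2\le s^{D+1}$, so joint convergence of the low-degree components gives $T_sf_n\xrightarrow{d}T_sh$. Since $T_sf_n\in[0,1]$, also $T_sh\in[0,1]$ almost surely, and $T_sh\to h$ as $s\uparrow 1$ (\cref{lem:boolean-limit}).

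Second, identify the maximizers among $[0,1]$-valued functions of mean $a$, not only among sets. For $s\in(0,1)$, $\Stab_s$ is a positive definite quadratic form, hence strictly convex. If $h$ is not $\{0,1\}$-valued, pick $\eta>0$ and a positive-measure set on which $\eta\le h\le 1-\eta$, and split it into equal halves $E_1,E_2$. Then $h_\pm=h\pm\eta(1_{E_1}-1_{E_2})$ are distinct, $[0,1]$-valued, of mean $a$, and $h=\frac12(h_++h_-)$. Applying Borell's inequality in functional form to $h_\pm$, after your grade comparison, gives $\Stab_s(h)<\Stab_s(H_a)$ (\cref{lem:characterize-equality}).

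With these two pieces, the rest of your argument goes through as you describe. That includes your grade comparison $\sum_\alpha s^{\gr(\alpha)}\hat h_\alpha^2\le\sum_\alpha s^{|\alpha|}\hat h_\alpha^2$, the equality case of Borell, and your closing barycenter argument (the paper's \cref{lem:halfspace-barycenter}). Your fallback paragraph points in this direction but supplies neither the $[0,1]$ bound nor the convexity step.
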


Mossel and Neeman \cite{mossel2015robust} originally proved
\cref{thm:borell-stability}, and subsequently Eldan improved the bound on $\delta$ \cite{eldan2015two}.
Here we prove the theorem with a nonquantitative bound on $\delta$, with the advantage that the argument is significantly simpler.

The idea of the proof is to show that all maximizers among the space of limits of sets $A \subseteq \R^n$ are halfspaces.
To take the limit of a set $A$, we separately take the limit of each component $\Pi_k 1_A$ using the regularity lemma.
There might be ``mass escaping to infinity'' which disappears in the limit, such as for $A_n = \{X : \prod_{i=1}^n X_i > 0\}$ which has no Hermite weight between degree 0 and $n$.
But, this mass isn't seen by noise stability, so we still get $\Stab_s(A_n) \to \Stab_s(f)$ for the limit $f$.

\begin{lemma}\label{lem:boolean-limit}
    Let $Z$ be a set of graded variables with countably many variables of each grade.
    Let $a \in [0,1]$.
    Let $A_n \subseteq \R^n, \gam_n(A_n) = a$. Then there exists a subsequence of $A_n$ and $f \in L_2(Z)$ such that $0 \leq f \leq 1, \E f = a$, and for all $s \in (0,1)$, along the subsequence,
    \[
        (\Pi_k 1_{A_n})_{k \in \N} \xrightarrow{d} (\Pi_k f)_{k \in \N}, \qquad \lim_{n \to \infty} \Stab_s(A_n) = \Stab_s(f)\,.
    \]
    (Recall that $\Pi_k, T_s,\Stab_s(f)$ are defined on the graded space according to grade, see \eqref{eq:correlation}.)
\end{lemma}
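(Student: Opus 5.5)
Apply \cref{thm:regularity-lemma-compactness} to the sequences $p_{n,k} = \Pi_k 1_{A_n}$ for $k \in \N$, each in $\cH_k(X_1,\dots,X_n) \subseteq \grH_k(Z)$, with $\E p_{n,k}^2 \le \E 1_{A_n}^2 = a$. This gives a subsequence and graded polynomials $g_k \in \grH_k(Z)$ with $(\Pi_k 1_{A_n})_{k} \xrightarrow{d} (g_k)_k$ jointly. Define $f = \sum_k g_k$.

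\textbf{Step 1: $f$ is in $L_2(Z)$ and $\Pi_k f = g_k$.} Along the subsequence, Portmanteau gives $\E g_k^2 \le \liminf_n \E(\Pi_k 1_{A_n})^2$. Fatou over $k$ then gives $\sum_k \E g_k^2 \le \liminf_n \sum_k \E(\Pi_k 1_{A_n})^2 \le a$. So $f \in L_2(Z)$, and $\Pi_k f = g_k$ because the $g_k$ lie in orthogonal chaoses. Also $\Pi_0 1_{A_n} = a$ is constant, so $\E f = g_0 = a$. This proves the convergence claim.

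\textbf{Step 2: noise stability.} Write $\Stab_s(A_n) = \sum_k s^k \E(\Pi_k 1_{A_n})^2$. The tail beyond degree $K$ is at most $s^K a$, uniformly in $n$. For each fixed $k$ we need $\E(\Pi_k 1_{A_n})^2 \to \E g_k^2$. Convergence in distribution alone gives only $\liminf$. However, \cref{lem:moments} (the hypercontractivity upgrade) shows that bounded-variance chaos sequences converging in law also converge in $L_2$ moments, so second moments converge. Combining the finite sum with the uniform tail bound gives $\Stab_s(A_n) \to \sum_k s^k \E g_k^2 = \Stab_s(f)$ by \eqref{eq:correlation}. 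The subsequence is chosen once and does not depend on $s$.

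\textbf{Step 3: $0 \le f \le 1$ (the main obstacle).} Convergence of individual chaos components does not obviously preserve pointwise bounds, because mass can escape to infinitely high degree. The plan is to use $T_t$ for fixed $t<1$ as the bridge.
\begin{itemize}
\item Each $T_t 1_{A_n}$ takes values in $[0,1]$.
\item $T_t 1_{A_n} = \sum_k t^k \Pi_k 1_{A_n}$ converges in law to $T_t f = \sum_k t^k g_k$.
\end{itemize}
For the second item, truncate at degree $K$. The truncation error is at most $t^K\sqrt a$ in $L_2$, uniformly in $n$, and the finite sums converge in law by joint convergence and the continuous mapping theorem. This gives the convergence by the same $\varepsilon$-approximation argument as in the proof of \cref{thm:regularity-ultralimit}. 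Since $[0,1]$ is closed, Portmanteau gives $0 \le T_t f \le 1$ almost surely. Finally, $T_t f \to f$ in $L_2(Z)$ as $t \uparrow 1$, so $0 \le f \le 1$ almost surely.
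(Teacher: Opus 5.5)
Your proposal is correct and takes essentially the same route as the paper. You apply \cref{thm:regularity-lemma-compactness} to $(\Pi_k 1_{A_n})_k$, set $f=\sum_k g_k$, obtain $\Stab_s$ convergence from second-moment convergence (\cref{lem:moments}) together with the uniform tail bound, and deduce $0\le f\le 1$ from $T_t 1_{A_n}\xrightarrow{d} T_t f$ and $T_t f\to f$ as $t\uparrow 1$. The differences are only cosmetic: you bound $\sum_k \E g_k^2$ via Portmanteau and Fatou instead of moment convergence of partial sums, and you use the closed-set form of Portmanteau instead of a test function vanishing on $[0,1]$.
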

\begin{proof}
    Apply \cref{thm:regularity-lemma-compactness} to $(\Pi_k 1_{A_n})_{k \in \N}$.
    Then there exist graded polynomials $p_k \in \grH_k(Z)$ such that $(\Pi_k 1_{A_n})_{k \in \N} \xrightarrow{d} (p_k)_{k \in \N}$.
    By \cref{lem:moments}, convergence holds for all joint moments. Thus for all $D$,
    \[
        \sum_{k = 0}^D \|p_k\|_2^2 = \lim_{n \to \infty} \sum_{k = 0}^D \|\Pi_k 1_{A_n}\|_2^2 \leq \lim_{n \to \infty}\E 1_{A_n}^2 \leq 1\,.
    \]
    Thus we can define $f = \sum_{k \in \N} p_k$ in $L_2(Z)$ which has the desired components $\Pi_k f = p_k$. The mean of $f$ is, using that non-constant Hermite polynomials are mean-zero,
    \[\E f = \E \Pi_0 f = \lim_{n \to \infty} \E \Pi_0 1_{A_n} = \lim_{n \to \infty} \gam_n(A_n) = a\,.\]
    Next, recall that the noise operator acts as $T_s f = \sum_{k \in \N} s^k \Pi_k f$.
    We have
    \[\abs{\Stab_s(A_n) - \Stab_s(f)} \leq \sum_{k \in \N} s^k \left|\norm{\Pi_k 1_{A_n}}_2^2 - \norm{\Pi_k f}_2^2\right|, \qquad \sum_{k \geq D} s^k \left|\norm{\Pi_k 1_{A_n}}_2^2 - \norm{\Pi_k f}_2^2\right| \leq 2s^{D}\,.\]
    Therefore, for all $\eps > 0$, by taking $D$ large enough then applying convergence in distribution to the components up to $D$, we get $|\Stab_s(A_n) - \Stab_s(f)| \leq \eps$ for all sufficiently large $n$. Hence $\lim_{n \to \infty} \Stab_s(A_n) = \Stab_s(f)$.

    It remains to show $0 \leq f \leq 1$. We use the noise operator,
    proving first $T_s 1_{A_n} \xrightarrow{d} T_s f$.
    We have,
    \[
        \left\|T_s 1_{A_n}  - \sum_{k = 0}^D s^k \Pi_k 1_{A_n} \right\|_2 \leq s^{D+1}, \qquad \left\|T_s f - \sum_{k=0}^D s^k \Pi_k f \right\|_2 \leq s^{D+1}\,.
    \]
    Then for any bounded 1-Lipschitz function $\phi$,
    \begin{align*}
        &\quad |\E \phi(T_s 1_{A_n}) - \E \phi(T_s f)| \\
        &\leq \E |T_s 1_{A_n} - {\textstyle\sum}_{k = 0}^D s^k \Pi_k 1_{A_n}| + |\E \phi({\textstyle\sum}_{k = 0}^D s^k \Pi_k 1_{A_n}) - \E \phi({\textstyle\sum}_{k = 0}^D s^k \Pi_k f)| + \E |{\textstyle\sum}_{k = 0}^D s^k \Pi_k f - T_s f|\\
        &\leq 2s^{D+1} + |\E \phi({\textstyle\sum}_{k = 0}^D s^k \Pi_k 1_{A_n}) - \E \phi({\textstyle\sum}_{k = 0}^D s^k \Pi_k f)|\,.
    \end{align*}
    For any $\eps > 0$, selecting $D$ large enough then applying joint convergence in distribution to the middle term makes the entire expression at most $\eps$.
    Hence $T_s 1_{A_n} \xrightarrow{d} T_s f$.
    
    Take $\phi$ to be a bounded continuous test function which is 0 on $[0,1]$ and positive elsewhere.
    By $T_s 1_{A_n} \xrightarrow{d} T_s f$ and the observation $T_s 1_A \in [0,1]$ for any set $A$,
    \[
        \E \phi(T_s f) = \lim_{n \to \infty} \E \phi(T_s 1_{A_n}) = 0\,.
    \]
    Since $\phi$ is nonnegative, we must have $T_s f \in [0,1]$ almost surely.
    Finally, since $T_s f$ converges to $f$ in $L_2$ as $s \uparrow 1$, then $f \in [0,1]$ almost surely.
\end{proof}

\begin{lemma}[Characterization of equality case]
\label{lem:characterize-equality}
    Let $a \in (0,1), s \in (0,1)$.
    Let $Z$ be a set of graded variables with countably many variables of each grade
    and let $f \in L_2(Z)$ such that $0 \leq f \leq 1$, $\E f = a$, and $\Stab_s(f) = \Stab_s(H_a)$.
    Up to a set of measure zero, then $f = 1_H$ for some $H = \{X : \sum_i c_i X_i \geq c_0\}$ with $\sum_i c_i^2 = 1$ supported on grade-1 variables.
\end{lemma}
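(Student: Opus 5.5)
The plan is to reduce the graded problem to the classical (ungraded) two-function Borell inequality by splitting the graded noise into an ordinary $s$-noise followed by an extra smoothing on the higher grades. Let $T''$ be the operator on $L_2(Z)$ which applies Ornstein--Uhlenbeck noise with correlation $s^{r-1}$ to every grade-$r$ variable. In particular $T''$ acts as the identity on grade-one variables. Construct a coupling as follows: sample $Z$; sample $Y$ with $\E[Y_i^{(r)} Z_i^{(r)}] = s^{r-1}$; then sample $W$ which is $s$-correlated with $Y$ coordinatewise and conditionally independent of $Z$ given $Y$. Then $(W,Z)$ has correlation $s^r$ on grade-$r$ coordinates, and conditioning on $(W,Y)$ gives
\[
    \Stab_s(f) = \E f(W) f(Z) = \E f(W)\, g(Y), \qquad g := T'' f\,.
\]
Here $(W,Y)$ is an ordinary $s$-correlated pair of (countably many) standard Gaussians, and $0 \le g \le 1$, $\E g = a$.

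Next apply the two-function form of Borell's inequality to $f$ and $g$ in the ungraded sense. For $[0,1]$-valued functions with means $a,a$ it gives $\E f(W)g(Y) \le \Stab_s(H_a)$, so our hypothesis forces equality. Invoke the equality case of the two-function Borell inequality (as in Mossel--Neeman): equality for $s\in(0,1)$, $a \in (0,1)$ forces $f = g = 1_H$ a.e.\ for a single halfspace $H = \{\sum_i c_i X_i \ge c_0\}$ with $\sum_i c_i^2 = 1$. At this stage $H$ may use variables of any grade.

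It remains to show that $H$ only involves grade-one variables. Since $g = 1_H$ is an indicator, $\E g^2 = \E g = a$. On the other hand, by Parseval and contractivity,
\[
    \E (T'' f)^2 \le \E f^2 \le \E f = a\,,
\]
so both inequalities are equalities. Equality in the first one means every Hermite coefficient of $f$ involving a grade-$r \ge 2$ variable vanishes, because $T''$ multiplies such coefficients by a factor of modulus strictly less than $1$. Hence $f$ depends only on grade-one variables. Then $1_H = f$ is measurable with respect to the grade-one variables, which forces $c_i = 0$ off grade one. This is because a halfspace indicator depending nontrivially on an independent coordinate is not measurable with respect to the others.

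The main obstacle is the equality case of Borell's inequality in this infinite-dimensional, two-function setting. The standard results are stated for $\R^n$, while $f$ and $g$ are functions of countably many variables. My first attempt would be to reduce to finite dimensions. Conditioning on the tail variables, or approximating by martingale projections onto the first $n$ coordinates, preserves $[0,1]$-valuedness and the means and only decreases stability, so the finite-dimensional equality statement can be used. A cleaner alternative is to use a proof of the equality case that is dimension-free, such as the Ornstein--Uhlenbeck semigroup/Mossel--Neeman argument, whose rigidity step works verbatim on $L_2$ of countably many Gaussians. If that proves messy, I would fall back on a perturbative argument. A maximizer must satisfy the first-order condition $T_s g = $ threshold structure, i.e.\ $f = 1\{T_s g \ge \lambda\}$. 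Combining this with the equality in the strict-convexity step above should pin down $f$ as a halfspace indicator.
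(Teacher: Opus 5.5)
Your argument is correct in outline, but it is organized differently from the paper's.

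\textbf{How the paper proceeds.} It removes higher grades first by an elementary comparison. Since $\gr(\alpha)\ge|\alpha|$, the graded stability $\sum_\alpha s^{\gr(\alpha)}\hat f_\alpha^2$ is at most the ungraded stability $\sum_\alpha s^{|\alpha|}\hat f_\alpha^2$ of the same function. The inequality is strict unless $f$ has no Hermite weight on grade-$\ge 2$ variables, and the ungraded stability is at most $\Stab_s(H_a)$ by the plain Borell inequality. The paper then reduces to indicators by strict convexity of $f\mapsto\langle f,T_sf\rangle$. Only at the end does it cite the Mossel--Neeman equality case, and only for a single set.

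\textbf{How you proceed.} You factor the graded noise as $T_sT''$ and rewrite $\Stab_s(f)$ as the ungraded two-function quantity $\E f(W)\,(T''f)(Y)$. The two-function equality case then gives at once that $f$ is a halfspace indicator. Grade-one support follows from $\|T''f\|_2=\|f\|_2$, which is the same multiplier-below-one mechanism as the paper's comparison. Your coupling and the final contraction step are correct.

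\textbf{Trade-off.} Your route packages the reductions to indicators and to grade one into one application of the equality case. The paper keeps the black box lighter: only the inequality is needed for the grade reduction, and the equality case only for one set. Yours needs the equality case for two $[0,1]$-valued functions under the \emph{product} functional.

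\textbf{A missing step in the citation.} Mossel--Neeman's form is $\E J_s(f(W),g(Y))\le J_s(\E f,\E g)$, with $J_s(x,y)=\Pr(U\le\Phi^{-1}(x),\,V\le\Phi^{-1}(y))$ for $s$-correlated $U,V$. Its equality cases also include non-indicator pairs $f=\Phi(\langle v,x\rangle+b)$, $g=\Phi(\langle v,x\rangle+c)$. To exclude them you need the observation that $xy<J_s(x,y)$ for $x,y\in(0,1)$. Then equality in $\E f(W)g(Y)\le\E J_s(f(W),g(Y))$ forces $f(W)$ or $g(Y)$ into $\{0,1\}$ almost surely, which rules those pairs out.

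\textbf{The countably-many-variables issue.} Of your three plans, the second is the right one and is what the paper does, but it is not verbatim.
\begin{itemize}
\item In the Mossel--Neeman argument, the step deducing that $\nabla v_t$ is constant from $\nabla v_t(X)=\nabla w_t(Y)$ a.s.\ passes through the conditional law of $Y$ given $X$. With countably many coordinates these conditional laws are singular to the Gaussian measure, so the almost-sure identity cannot be transferred through them.
\item The paper's footnote replaces that step. It chains two $s$-correlated steps into an $s^2$-correlated pair $(X,X')$ and uses $\E\|\nabla v_t(X)-\nabla v_t(X')\|_2^2\ge 2(1-s^2)\E\|\nabla v_t(X)-\E\nabla v_t(X)\|_2^2$.
\end{itemize}

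Your first plan does not work. The projections $\E[f\mid X_1,\dots,X_n]$ in general only have stability tending to $\Stab_s(H_a)$, so an exact equality statement in $\R^n$ never applies. Converting near-equality into structure is the robust dimension-free theorem, which is exactly what this lemma is used to prove.

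The perturbative fallback is too weak on its own, because the first-order condition does not single out halfspaces. For example, $A=\{x_1x_2>0\}$ satisfies $A=\{T_s1_A>1/2\}$ up to a null set.
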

\begin{proof}
    The noise stability is $\Stab_s(f) = \sum_{k \in \N} s^k \norm{\Pi_k f}_2^2$.
    Graded variables in $f$ increase the graded-degree, and hence strictly worsen the noise stability: $\sum_{k \in \N}s^k \|\Pi_k f\|_2^2 \leq \sum_{k \in \N} s^k \|\tilde \Pi_k f\|_2^2$ where $\tilde \Pi_k$
    treats all variables as grade 1.
    Therefore maximizers can use only grade 1 variables.

    For $0 \leq f \leq 1$, write $f = \E_{\mu} g$ for a mixture $\mu$ of 0/1-valued $g$.
    Since $\Stab_s(f)$ is strictly convex, maximizers must be 0/1-valued functions (i.e.\ sets).
    Strict convexity of $f \mapsto \Stab_s(f)$ follows from the observation that $\Stab_s(f) = \ip{f, T_s f}$ is a positive definite quadratic form for $s \in (0,1]$.
    
    It remains to characterize the maximizers among $A \subseteq \R^X$ with a countable number of variables $X$.
    For a finite set of variables $X$ and
    Mossel and Neeman \cite[Theorem 1.3]{mossel2015robust} characterize the maximizers
    as halfspaces $H = \{X : \sum_i c_i X_i \geq c_0\}$. The case of countably many variables follows by notational changes.\footnote{Except the line ``$\nabla v_t(X) = \nabla w_t(Y)$ almost surely implies $\nabla v_t = \nabla w_t$ are almost surely equal to some constant, since the conditional distribution of $Y$ given $X$ is fully supported'' ($X, Y$ are a pair of $\rho$-correlated Gaussians).
    Change this line to: We deduce $\nabla v_t(X) = \nabla w_t(Y) = \nabla v_t(X')$ almost surely where $X, X'$ are a pair of $\rho^2$-correlated Gaussians. It holds generally that, $\E \|\nabla v_t(X) - \nabla v_t(X')\|_2^2 \geq 2(1-\rho^2)\E\|\nabla v_t(X) - \E \nabla v_t(X)\|_2^2$. This implies $\nabla v_t(X) = \E \nabla v_t(X)$ almost surely. Hence $\nabla v_t(X)$ is constant, and $\nabla w_t(Y) = \nabla v_t(X)$ is the same constant.}
\end{proof}

For a set $A \subseteq \R^n$, let $m_A$ denote its barycenter,
\[
    m_A := \int_{\R^n} x 1_A(x) d\gam_n(x), \qquad (\Pi_1 1_A)(x) = \ip{m_A, x}\,.
\]
The candidate halfspace approximation for $A$ is $H = \{x \in \R^n: \ip{\frac{m_A}{\norm{m_A}_2}, x} \geq \Phi^{-1}(1 - \gam_n(A))\}$.
The following lemma establishes an upper bound on the approximation error.
\begin{lemma}\label{lem:halfspace-barycenter}
    Let $a \in (0,1)$ and $A \subseteq \R^n, \gam_n(A) = a$.
    Set $H = \{x \in \R^n : \ip{x, \frac{m_A}{\norm{m_A}_2}} \geq \Phi^{-1}(1-a)\}$. Then
    \[
        \gam_n(A \symdiff H)^2 \leq \frac{4\sqrt{2}}{\sqrt{\pi}}\left(\norm{m_H}_2 - \norm{m_A}_2\right)\,.
    \]
\end{lemma}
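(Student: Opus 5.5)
Write $u = m_A/\norm{m_A}_2$ and $t = \Phi^{-1}(1-a)$, so $H = \{\ip{x,u} \geq t\}$ and $\gam_n(H) = a$. The argument is one-dimensional: project onto the direction $u$ and compare $A$ and $H$ via the linear function $\ip{x,u}$.

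\emph{Step 1: reduce to an integral of $\ip{x,u}$.} Since $\ip{m_A,u} = \norm{m_A}_2$ and $m_H$ is parallel to $u$,
\[
    \norm{m_H}_2 - \norm{m_A}_2 = \int \ip{x,u}\,(1_H - 1_A)\,d\gam_n = \int_{H\setminus A} \ip{x,u}\,d\gam_n - \int_{A \setminus H} \ip{x,u}\,d\gam_n .
\]
Subtract $t\bigl(\gam_n(H\setminus A) - \gam_n(A\setminus H)\bigr)$, which is zero because $\gam_n(A) = \gam_n(H)$. This gives
\[
    \norm{m_H}_2 - \norm{m_A}_2 = \int_{A\symdiff H} \abs{\ip{x,u} - t}\,d\gam_n ,
\]
since the integrand $\ip{x,u}-t$ is $\geq 0$ on $H\setminus A$ and $<0$ on $A\setminus H$. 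This is the bathtub principle made quantitative.

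\emph{Step 2: lower bound the integral.} Put $\eta = \gam_n(A\symdiff H)$. The pushforward of $\gam_n$ under $x\mapsto \ip{x,u}$ is $\calN(0,1)$, with density bounded by $1/\sqrt{2\pi}$. Hence for any set $S$ of measure $\eta$,
\[
    \int_S \abs{\ip{x,u}-t}\,d\gam_n \geq \int_{\R} \abs{y-t}\,\min\Bigl(\phi(y), \mathbf 1\bigl[\abs{y-t} \le \ell\bigr]\,\tfrac{1}{\sqrt{2\pi}}\Bigr)\,dy .
\]
A cleaner way to state this: the integral over $S$ is at least the integral of $\abs{y-t}$ against the measure of mass $\eta$ concentrated as close to $t$ as possible. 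Because the density is at most $1/\sqrt{2\pi}$, that mass occupies an interval around $t$ of total length at least $\sqrt{2\pi}\,\eta$, i.e.\ radius at least $\sqrt{2\pi}\,\eta/2$. Putting density exactly $1/\sqrt{2\pi}$ on $[t-r, t+r]$ with $r = \sqrt{2\pi}\,\eta/2$ gives the extremal configuration, and
\[
    \frac{2}{\sqrt{2\pi}}\int_0^{r} y\,dy = \frac{r^2}{\sqrt{2\pi}} = \frac{\sqrt{2\pi}}{4}\,\eta^2 .
\]
Therefore $\eta^2 \le \frac{4}{\sqrt{2\pi}}\bigl(\norm{m_H}_2 - \norm{m_A}_2\bigr) = \frac{4\sqrt2}{\sqrt{\pi}}\cdot\frac{1}{2}\bigl(\norm{m_H}_2 - \norm{m_A}_2\bigr)$, which is at least as strong as the stated bound.

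\emph{Main obstacle.} The only delicate point is justifying the rearrangement in Step 2 rigorously. I would use the layer-cake identity $\int_S \abs{\ip{x,u}-t}\,d\gam_n = \int_0^\infty \gam_n\bigl(S\cap\{\abs{\ip{x,u}-t}>\lambda\}\bigr)\,d\lambda$. Then bound each layer using
\[
    \gam_n\bigl(S \cap \{\abs{\ip{x,u}-t} > \lambda\}\bigr) \geq \Bigl(\eta - \tfrac{2\lambda}{\sqrt{2\pi}}\Bigr)_+ ,
\]
which holds because the slab $\{\abs{\ip{x,u}-t}\le \lambda\}$ has measure at most $2\lambda/\sqrt{2\pi}$. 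Integrating over $\lambda$ gives $\sqrt{2\pi}\,\eta^2/4$ directly.

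Two degenerate cases remain. If $m_A = 0$, the direction $u$ is undefined; this is handled by convention, since any unit $u$ works in the same computation with $\norm{m_A}_2 = 0$. The constant slack absorbs any factor-of-$2$ discrepancy with the stated bound.
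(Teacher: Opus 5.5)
Your proof is correct and follows the paper's route: you use the same identity $\|m_H\|_2-\|m_A\|_2=\int_{A\triangle H}|\langle u,x\rangle-t|\,d\gamma_n$, followed by the bound $1/\sqrt{2\pi}$ on the density of $\langle u,x\rangle$. The only difference is the last step: the paper reduces to a slab and uses that half the mass lies at distance at least $\frac{\sqrt{\pi}}{2\sqrt2}\gamma_n(A\triangle H)$ from $t$, whereas your layer-cake integration needs no rearrangement and gains a factor of $2$, giving the constant $\frac{2\sqrt2}{\sqrt\pi}$ (the heuristic display with the undefined $\ell$ can simply be dropped).
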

\begin{proof}
    Let $u = \frac{m_A}{\norm{m_A}_2}$ and $t = \Phi^{-1}(1-a)$ so $H = \{x : \ip{u,x} \geq t\}$. The unit vector $u$ points in the direction of both $m_A$ and $m_H$, so
    \begin{align*}
        \int_{\R^n} \ip{u,x} 1_A(x) d\gam_n(x) = \ip{u,m_A} = \norm{m_A}_2, \qquad \int_{\R^n}\ip{u,x} 1_H(x) d\gam_n(x) = \ip{u, m_H} = \norm{m_H}_2.
    \end{align*}
    Therefore, using that $\int_{\R^n} (1_A(x) - 1_H(x)) d\gam_n(x) = \gam_n(A) - \gam_n(H) = 0$,
    \begin{align}\label{eq:b1}
        \norm{m_H}_2 - \norm{m_A}_2 = \int_{\R^n} \left(\ip{u,x} -t \right)(1_H(x) - 1_A(x)) d\gam_n(x) = \int_{A \symdiff H} |\ip{u,x} - t| d\gam_n(x)\,.
    \end{align}
    In the rightmost expression,
    the worst case for $A \symdiff H$ is if it is a slab around the plane $\ip{u,x} = t$.
    Therefore, we reduce to a one-dimensional integral,
    \begin{equation}\label{eq:b2}
    \int_{A \symdiff H} |\ip{u,x} - t| d\gam_n(x) \geq \inf_I \int_I |x - t| d\gam(x)
    \end{equation}
    where the one-dimensional integral is Gaussian since $\ip{u,x}$ is standard Gaussian,
    and $I \ni t$ is an interval with $\gam(I) = \gam_n(A \symdiff H) =: \delta$.
    Since the Gaussian PDF is pointwise upper bounded by $\frac{1}{\sqrt{2\pi}}$,
    $I$ must contain at least $\frac\delta2$ mass outside the interval $[t - \frac{\sqrt{\pi}}{2\sqrt{2}}\delta, t + \frac{\sqrt{\pi}}{2\sqrt{2}}\delta]$.
    All such points have $|x-t| \geq \frac{\sqrt{\pi}}{2\sqrt{2}}\delta $. Therefore,
    \begin{align}\label{eq:b3}
        \int_I |x - t| d\gam(x) \geq \frac{\sqrt{\pi}}{2\sqrt{2}}\delta \cdot \frac{\delta}{2} =\frac{\sqrt{\pi}}{4\sqrt{2}} \delta^2\,.
    \end{align}
    Combining \eqref{eq:b1}, \eqref{eq:b2}, \eqref{eq:b3} proves the claim.
\end{proof}

\begin{proof}[Proof of \cref{thm:borell-stability}]
    Assume for the sake of contradiction that the assertion fails for some $a,s,\eps$.
    Then there is a sequence of sets $A_n, \gam(A_n) = a$
    such that $\Stab_s(H_a) - o(1) \leq \Stab_s(A_n) \leq \Stab_s(H_a)$ but $\gam_n(A_n\symdiff H) > \eps$
    for all halfspaces $H$ of measure $a$.
    By \cref{lem:boolean-limit},
    there exists $f \in L_2(Z)$ such that, along a subsequence,
    \[
        (\Pi_k 1_{A_n})_{k \in \N} \xrightarrow{d} (\Pi_k f)_{k \in \N}, \qquad 0 \leq f \leq 1, \qquad \E f = a, \qquad \Stab_s(f) = \Stab_s(H_a)\,.
    \]
    By \cref{lem:characterize-equality},
    $f$ is a halfspace: $f = 1_H$ for some $H = \{X : \sum_i c_i X_i \geq c_0\}$.
    By the joint convergence in \cref{lem:boolean-limit},
    \[
        \lim_{n \to \infty} \|\Pi_1 1_{A_n}\|_2^2 = \|\Pi_1 1_H\|_2^2\,.
    \]
    In the notation of \cref{lem:halfspace-barycenter}, $\|\Pi_1 1_{A_n}\|_2^2 = \|m_{A_n}\|_2^2$ and $\|\Pi_1 1_H\|_2^2 = \|m_H\|_2^2$.
    Therefore, by \cref{lem:halfspace-barycenter}, there are halfspaces $H_n$ such that $\gam_n(A_n \symdiff H_n) \to 0$, contradicting the assumption.
\end{proof}

\bibliographystyle{alpha}
{\footnotesize\bibliography{references}}

\appendix

\section{Probability preliminaries}
\label{sec:hypercontractivity}

\begin{lemma}[{\cite[Example 2.4]{billingsley1999convergence}}]
Let $x_{n,k}$ be real-valued random variables. Endow $\R^\N$ with the product topology. The following are equivalent:
\begin{enumerate}[(i)]
    \item $(x_{n,k})_{k \in \N} \xrightarrow{d} (x_k)_{k \in \N}$
    \item $(x_{n,1},\dots, x_{n,k}) \xrightarrow{d} (x_1, \dots, x_k)$ for all $k$
    \item $(x_{n,j})_{j \in J} \xrightarrow{d} (x_j)_{j \in J}$ for all finite sets $J \subseteq \N$.
\end{enumerate}
\end{lemma}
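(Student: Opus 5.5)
The three conditions are equivalent because the product topology on $\R^\N$ is determined by finite-dimensional projections. I will prove the cycle (i)$\Rightarrow$(iii)$\Rightarrow$(ii)$\Rightarrow$(i).

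For (i)$\Rightarrow$(iii), the coordinate projection $\pi_J:\R^\N\to\R^J$ is continuous in the product topology. The continuous mapping theorem then gives $(x_{n,j})_{j\in J}\xrightarrow{d}(x_j)_{j\in J}$. For (iii)$\Rightarrow$(ii), take $J=\{1,\dots,k\}$.

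The substantive direction is (ii)$\Rightarrow$(i), and I expect it to be the main obstacle. The plan is to use the metric $\rho(x,y)=\sum_i 2^{-i}\min(1,|x_i-y_i|)$, which metrizes the product topology, and to check weak convergence on bounded $\rho$-Lipschitz test functions. By the Portmanteau theorem these functions determine weak convergence on a metric space.

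Fix a bounded $\rho$-Lipschitz $\varphi$ with constant $L$. For $k\in\N$, define $\varphi_k(x)=\varphi(x_1,\dots,x_k,0,0,\dots)$. Since $\rho(x,(x_1,\dots,x_k,0,\dots))\le 2^{-k}$, we have $\sup_x|\varphi(x)-\varphi_k(x)|\le L2^{-k}$. Each $\varphi_k$ is a bounded continuous function of the first $k$ coordinates, so (ii) gives $\E\varphi_k(x_n)\to\E\varphi_k(x)$. A three-term triangle inequality then yields
\[
  \limsup_{n\to\infty}\bigl|\E\varphi(x_n)-\E\varphi(x)\bigr|\le 2L2^{-k}.
\]
Letting $k\to\infty$ proves (i).

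An alternative route is to argue that the finite-dimensional cylinder sets form a convergence-determining class. The reason is that $\R^\N$ is separable, and finite intersections of basic open cylinders generate the topology, which is the argument in Billingsley. Tightness is automatic here because each coordinate sequence is tight, so $\prod_i[-M_i,M_i]$ with $\sum_i\P(|x_{n,i}|>M_i)<\eta$ is a compact set of mass at least $1-\eta$, by Tychonoff. Combining this tightness with Prokhorov's theorem and uniqueness of limits determined by the finite-dimensional marginals gives (i). I would present the direct Lipschitz approximation argument, since it avoids the tightness and Prokhorov step.
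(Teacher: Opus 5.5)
Your proof is correct, including the one substantive step, (ii)$\Rightarrow$(i). The paper gives no argument of its own and cites Billingsley's Example~2.4, which works with sets rather than test functions. There, the finite-dimensional cylinders $\pi_k^{-1}H$ form a $\pi$-system in the separable space $\R^\N$. Every ball $B(x,\eps)$ contains cylinder neighborhoods $\{y : |y_i-x_i|<\eta,\ i\le k\}$ of $x$ (with $2^{-k}<\eps/2$ and $\eta<\eps/2$), and their boundaries are pairwise disjoint as $\eta$ ranges over a continuum. A general criterion then shows that finite-dimensional continuity sets form a convergence-determining class. Like yours, that route needs no tightness or Prokhorov step. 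So your aside attributing the tightness argument to Billingsley is slightly off, although that argument is also valid.

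Your main route replaces the set-based criterion by a single estimate, $\sup_x|\varphi(x)-\varphi(x_1,\dots,x_k,0,\dots)|\le L2^{-k}$ for $\rho$-Lipschitz $\varphi$, combined with the Lipschitz form of the Portmanteau theorem. It is shorter and self-contained. It also uses essentially the same weighted metric the paper later relies on to metrize weak convergence on $\R^\N$ in the proof of \cref{thm:regularity-lemma-compactness}. Billingsley's version gives the slightly stronger conclusion that finite-dimensional continuity sets are convergence-determining. It also makes clear why the analogous statement fails in spaces such as $C[0,1]$: there, cylinders are never contained in small balls, so no uniform finite-coordinate approximation of your kind exists.
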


\begin{lemma}[Method of moments {\cite[Theorems 29.4, 30.1, 30.2]{billingsley1995probability}}]\label{lem:moment-method} 
Let $x_{n,k}$ be real-valued random variables. Assume that $(x_1, \dots, x_k)$ have a jointly Gaussian distribution. Assume for all finitely supported $\alpha \in \N^k$,
\[
    \E \prod_{i=1}^k x_{n,i}^{\alpha_i} \longrightarrow \E \prod_{i =1}^kx_i^{\alpha_i}\,.
\]
Then $(x_{n,1},\dots, x_{n,k}) \xrightarrow{d} (x_1,\dots, x_k)$.
\end{lemma}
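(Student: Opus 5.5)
This is the classical method of moments for a Gaussian limit, and I would reduce it to the one-dimensional case via the Cramér--Wold device. The point is that a Gaussian law is determined by its moments, since its moments grow only like $\sqrt{m!}$ and so satisfy Carleman's condition.

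\textbf{Step 1 (tightness).} The second moments $\E x_{n,i}^2$ converge by hypothesis, hence they are bounded in $n$. Markov's inequality then makes the vectors $x_n=(x_{n,1},\dots,x_{n,k})$ tight. By Prokhorov's theorem, every subsequence has a further subsequence converging in distribution to some limit $y$.

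\textbf{Step 2 (identify the limit).} Along such a subsequence, the moments of order $2m$ are bounded for every $m$. So each fixed monomial $\prod_i x_{n,i}^{\alpha_i}$ is uniformly integrable, because its $L^2$ norm is controlled by a higher moment via Hölder's inequality. Therefore $\E y^\alpha=\lim \E x_n^\alpha=\E x^\alpha$ for every $\alpha$.

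Now fix $t\in\R^k$. The projections $\langle t,y\rangle$ and $\langle t,x\rangle$ have equal moments of all orders. Since $\langle t,x\rangle$ is a one-dimensional Gaussian, its moments satisfy $|\E\langle t,x\rangle^m|\le C^m\sqrt{m!}$. This gives Carleman's condition, equivalently a moment generating function finite near the origin. By the uniqueness theorem for the one-dimensional moment problem (Billingsley, Theorem 30.1), $\langle t,y\rangle\overset{d}{=}\langle t,x\rangle$. Cramér--Wold (Theorem 29.4) then gives $y\overset{d}{=}x$.

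\textbf{Step 3 (conclude).} Every subsequence of $x_n$ has a further subsequence converging to the law of $x$, so the whole sequence converges in distribution to $x$.

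The only delicate point is Step 2: passing moments to the limit requires uniform integrability, which I supply from the bounded higher moments. The determinacy of the limit is handled by the Gaussian growth bound. When the argument is run along $\cU$, as the paper needs, the same proof works by replacing "subsequence" with ultralimits in the compact space of laws. Tightness ensures that the ultralimit is a probability law, and the identification step is unchanged.
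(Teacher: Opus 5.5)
Your proof is correct and uses the same ingredients the paper invokes by citation: Cram\'er--Wold, moment-determinacy of the Gaussian (Billingsley 30.1), and the tightness/uniform-integrability argument behind Theorem 30.2. The only difference is the order. The cited route applies the one-dimensional method of moments to each projection $\langle t, x_n\rangle$ and then uses Cram\'er--Wold as a convergence theorem, whereas you run the tightness argument directly in $\R^k$ and use projections only to identify the subsequential limit.

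One small wording point: Carleman's condition and finiteness of the moment generating function near $0$ are not equivalent in general. Your bound $C^m\sqrt{m!}$ does give the latter, which is exactly the hypothesis of Theorem 30.1.
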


\begin{lemma}[Gaussian hypercontractive inequality]
\label{lem:hypercontractivity}
For all $p\in\cH_{\le d}(X)$ and $q\ge2$,
\begin{equation*}
 \norm p_q\le(q-1)^{d/2}\norm p_2\,.
\end{equation*}
\end{lemma}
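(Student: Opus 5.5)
This is the standard Gaussian hypercontractive inequality, and I would prove it through the Ornstein--Uhlenbeck semigroup rather than by any chaos-specific combinatorics. Write $T_s = \sum_{k\ge 0} s^k \Pi_k$ for $s\in[0,1]$, which agrees with the noise operator defined in \cref{sec:borell} on ordinary (grade-one) variables. The first and main input is Nelson's hypercontractivity theorem: for $1 < p \le q < \infty$ and $0 \le s \le \sqrt{(p-1)/(q-1)}$,
\[
    \|T_s f\|_q \le \|f\|_p \qquad \text{for all } f \in L_p(X).
\]
I would apply it with $p = 2$ and $s = (q-1)^{-1/2}$.

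\textbf{Reduction to the semigroup bound.} Given $p \in \cH_{\le d}(X)$, write $p = \sum_{k=0}^d \Pi_k p$ and set
\[
    f = \sum_{k=0}^d s^{-k}\, \Pi_k p, \qquad s = (q-1)^{-1/2},
\]
so that $T_s f = p$. By Parseval \eqref{eq:correlation},
\[
    \|f\|_2^2 = \sum_{k\le d} s^{-2k}\|\Pi_k p\|_2^2 \le s^{-2d}\|p\|_2^2 = (q-1)^d \|p\|_2^2 ,
\]
where the inequality uses $s\le 1$, i.e.\ $q\ge 2$. Nelson's theorem then gives $\|p\|_q = \|T_s f\|_q \le \|f\|_2 \le (q-1)^{d/2}\|p\|_2$, which is the claim.

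\textbf{Reduction to finitely many variables.} If $X$ is countably infinite, I would first prove the inequality for polynomials in finitely many variables. For general $p \in \cH_{\le d}(X)$, truncate the Hermite expansion to get finite polynomials $p_N \to p$ in $L_2$. Passing to an almost surely convergent subsequence, Fatou's lemma gives $\|p\|_q \le \liminf_N \|p_N\|_q \le (q-1)^{d/2}\|p\|_2$. So only finite dimension is needed.

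\textbf{Proving Nelson's theorem.} This is the main obstacle, since it is the only nontrivial input. I would cite it (Nelson; Gross), but if a self-contained argument is wanted I would proceed in three steps:
\begin{enumerate}[1.]
    \item Prove the two-point (Bonami--Beckner) inequality on $\{\pm1\}$ by a direct calculus estimate.
    \item Tensorize it to $\{\pm1\}^m$, using that $T_s$ is a product operator together with Minkowski's integral inequality.
    \item Pass to Gaussians via the central limit theorem, approximating each $X_i$ by $m^{-1/2}\sum_{j\le m} x_{ij}$ with $x_{ij}$ uniform on $\{\pm 1\}$. Under this substitution, polynomials of degree $\le d$ become multilinear polynomials whose moments converge by the CLT, and both sides of the inequality are continuous under this limit.
\end{enumerate}
The delicate point in step 3 is handling the uniform integrability of $|p|^q$. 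Hypercontractivity on the cube itself supplies this bound, so I would take $q$ to be an even integer first and then interpolate to general $q \ge 2$ by monotonicity of $L_q$ norms.
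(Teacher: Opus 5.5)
Your proof is correct. The paper itself gives no proof of this lemma: it is recorded in the appendix as a standard fact and used as a black box. It is used with $q=4$ in the bound on $\diamond$, and for general $q$ in \cref{lem:moments}, where any finite constant would suffice.

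You supply the standard derivation from Nelson's theorem. On $\cH_{\le d}(X)$ the operator $T_s$ is invertible with $\|T_s^{-1}\|_{L_2\to L_2}=s^{-d}$. Hence $\|p\|_q=\|T_s(T_s^{-1}p)\|_q\le\|T_s^{-1}p\|_2\le(q-1)^{d/2}\|p\|_2$. This makes visible that the constant is dimension-free and comes only from the top degree.

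Your Fatou step is a genuine addition to what the paper records. $\cH_{\le d}(X)$ is defined as a closed span that may involve infinitely many variables. The paper also applies the lemma to $\grH_d(Z)$ after forgetting grades, and your version covers that case too. The paper's choice buys brevity; yours makes the statement self-contained modulo Nelson.

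One caution about your optional self-contained proof of Nelson's theorem. Suppose ``interpolate to general $q\ge2$ by monotonicity of $L_q$ norms'' means deducing the inequality for non-even $q$ from the even case. Then you only get the constant $(2\lceil q/2\rceil-1)^{d/2}$, not $(q-1)^{d/2}$.

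The restriction to even $q$ is unnecessary anyway. Write $S_i=m^{-1/2}\sum_{j\le m}x_{ij}$. Suppose you have shown that $T_s^{\mathrm{cube}}(f\circ S)$ converges in distribution to $T_s f$; this needs an $L_2$ comparison with $(T_sf)\circ S$, much like \cref{lem:omitted-moment}. Then the direction you need, $\|T_sf\|_q\le\liminf_m\|T_s^{\mathrm{cube}}(f\circ S)\|_q$, follows from lower semicontinuity of $\E|\cdot|^q$ under weak convergence. This holds for every real $q$ and requires no uniform integrability.
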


\begin{lemma}
\label{lem:moments}
Let $d_1,\dots,d_k\in\N$ and $p_{n,i} \in \cH_{\le d_i}(X)$.
If $(p_{n,1}, \dots, p_{n,k}) \xrightarrow{d} (p_1,\dots, p_k)$ then all mixed moments converge, and the convergence also holds in $W_p$ for $p \in [1,\infty)$.
The same holds for graded polynomials $p_{n,i} \in \grH_{d_i}(Z)$.
\end{lemma}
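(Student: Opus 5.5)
The plan has three steps: uniform moment bounds via hypercontractivity (\cref{lem:hypercontractivity}), uniform integrability to get convergence of moments, and then Wasserstein convergence.

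\textbf{Step 1: uniform $L_q$ bounds.} Suppose $(p_{n,1},\dots,p_{n,k}) \xrightarrow{d} (p_1,\dots,p_k)$ with $p_{n,i} \in \cH_{\le d_i}(X)$. First I would show $\sup_n \E p_{n,i}^2 < \infty$ for each $i$. Paley--Zygmund together with hypercontractivity gives
\[
\Pr\bigl(|p_{n,i}| \ge \tfrac12 \|p_{n,i}\|_2\bigr) \ge \tfrac{9}{16}\cdot\frac{\|p_{n,i}\|_2^4}{\|p_{n,i}\|_4^4} \ge \tfrac{9}{16}\cdot 3^{-2d_i} =: c_i > 0 .
\]
If $\|p_{n,i}\|_2 \to \infty$ along a subsequence, the laws of $p_{n,i}$ would not be tight. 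This contradicts convergence in distribution. Hence $\sup_n \|p_{n,i}\|_2 < \infty$, and \cref{lem:hypercontractivity} then gives $\sup_n \|p_{n,i}\|_q \le (q-1)^{d_i/2}\sup_n\|p_{n,i}\|_2 < \infty$ for every $q \ge 2$.

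\textbf{Step 2: convergence of mixed moments.} Fix a monomial $\prod_i x_i^{\alpha_i}$. By Hölder's inequality and Step 1, $\sup_n \E \bigl|\prod_i p_{n,i}^{\alpha_i}\bigr|^2 < \infty$, so the family $\prod_i p_{n,i}^{\alpha_i}$ is uniformly integrable. By the continuous mapping theorem, $\prod_i p_{n,i}^{\alpha_i} \xrightarrow{d} \prod_i p_i^{\alpha_i}$. Uniform integrability then upgrades this to convergence of expectations, which is the standard fact that convergence in distribution plus uniform integrability implies convergence of means. In particular $\E|p_i|^q < \infty$ by Fatou's lemma.

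\textbf{Step 3: Wasserstein convergence.} I use the characterization that $W_p$ convergence on $\R^k$ is equivalent to weak convergence together with convergence of $\E\|x\|^p$, or equivalently uniform integrability of $\|x\|^p$. Uniform integrability of $\|(p_{n,1},\dots,p_{n,k})\|^p$ follows from its uniformly bounded $2p$-th moment, which in turn follows from Step 1 and Minkowski's inequality.

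\textbf{Graded case.} For $p_{n,i} \in \grH_{d_i}(Z)$, only the hypercontractive bound needs checking. Each graded polynomial of graded degree $d_i$ is in particular an ordinary polynomial of ordinary degree at most $d_i$ in the $Z$ variables, since every grade is at least one. So \cref{lem:hypercontractivity} applies verbatim with the same constant, and Steps 1--3 go through unchanged.

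The main subtlety is Step 1, namely that convergence in distribution alone forces bounded $L_2$ norms. The anti-concentration estimate from hypercontractivity handles this.
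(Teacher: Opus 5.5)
Your proof is correct and follows the paper's argument essentially step for step. Both use Paley--Zygmund plus hypercontractivity to turn tightness into a uniform $L_2$ bound, then hypercontractivity again for uniform $L_q$ bounds, uniform integrability with the continuous mapping theorem for the mixed moments, the ``weak convergence plus convergence of $p$th moments'' characterization of $W_p$, and the inclusion $\grH_d(Z)\subseteq\cH_{\le d}(Z)$ for the graded case; the only differences are cosmetic, namely your Paley--Zygmund threshold $\tfrac12$ instead of $1/\sqrt{2}$ and working with monomials rather than general polynomial-growth test functions.
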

\begin{proof}
Write $P_n=(p_{n,1},\dots,p_{n,k})$ and
$P=(p_1,\dots,p_k)$.
Since $P_n\xrightarrow{d}P$, each coordinate sequence
$(p_{n,i})_{n\in \N}$ has tight laws.
Recall a collection of probability measures on $\R^I$ is \emph{tight} if for every $\eps > 0$
there is compact $K \subseteq \R^I$ such that $\mu(K) \geq 1-\eps$ for every measure $\mu$ in the collection.

By tightness, the Paley--Zygmund inequality, and \cref{lem:hypercontractivity},
we have for $p_{n,k} \neq 0$,
\[
    \Pr\left(
        |p_{n,k}|\ge\frac{\norm{p_{n,k}}_2}{\sqrt2}
    \right)
    \ge
    \frac{\norm{p_{n,k}}_2^4}
         {4\norm{p_{n,k}}_4^4}
    \ge \frac{1}{4\cdot9^{d_k}}.
\]
Thus tightness implies $\sup_n\norm{p_{n,k}}_2<\infty$.
Applying hypercontractivity again, for every $q\ge2$,
\[
    \sup_n\norm{p_{n,k}}_q
    \le (q-1)^{d_k/2}\sup_n\norm{p_{n,k}}_2
    <\infty.
\]
Consequently, we deduce $\sup_n \E\norm{P_n}^q<\infty$ and, by the Portmanteau theorem, $\E\norm{P}^q< \infty$ for all $q \geq 1$.
Then for every continuous function $\varphi$ of
polynomial growth, the sequence $(\varphi(P_n))_{n\in\N}$ is uniformly
integrable. The continuous mapping theorem and
\cite[Theorem~3.5]{billingsley1999convergence} imply
\[
    \E\varphi(P_n)\longrightarrow\E\varphi(P).
\]
Thus we have convergence of all mixed moments.
Taking $\varphi(x)=\|x\|^q$
gives convergence of the $q$th absolute moments.
Together with weak convergence, this implies
$W_q(\Law(P_n),\Law(P))\to0$ by
\cite[Theorem~6.9]{villani2009optimal}.
Forgetting grades shows $\grH_{d}(Z) \subseteq \cH_{\le d}(Z)$ so the same argument applies to graded polynomials.
\end{proof}

\section{Combinatorial proof of the fourth moment theorem}
\label{sec:fourth-moment}

We use the tensor notation from \cref{sec:tensors}.
We adopt the Nualart and Peccati~\cite{nualart2005central} notation that $P \otimes_r P$ equals $(P^{(r)})^\T P^{(r)}$ unflattened into a tensor.
$P \otimes_r P$ is a non-symmetric tensor with order $2(d-r)$ when $P$ has order $d$.

\begin{theorem}[{Fourth moment theorem \cite{nualart2005central,peccati2004gaussian}}]
\label{thm:fourth-moment}
Let $d_1, \dots, d_k \in \N \setminus \{0\}$ (repeats allowed). Let $p_{n,i} \in \cH_{d_i}(X_1, \dots, X_n)$.
    Let $P_{n,i}$ be their Hermite coefficient tensors.
    Assume that $\lim_{n \to \infty}\E p_{n,i}p_{n,j} = \Sig_{ij}$ for some matrix $\Sig \in \R^{k \times k}$.
    The following are equivalent:
\begin{enumerate}[(i)]
    \item $(p_{n,1}, \dots, p_{n,k}) \xrightarrow{d} \calN(0, \Sig)$
    \item $\E p_{n,i}^4 \rightarrow 3\Sig_{ii}^2$ for all $i = 1,\dots, k$.
    \item $\|P_{n,i} \otimes_r P_{n,i}\|_F \rightarrow 0$ for all $0 < r < d_i$ and $i=1, \dots, k$.
    \item $\norm{P_{n,i}^{(r)}} \rightarrow 0$ for all $0 < r < d_i$ and $i = 1, \dots, k$.
\end{enumerate}
\end{theorem}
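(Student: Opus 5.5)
I would prove the cycle of implications (i) $\Rightarrow$ (ii) $\Rightarrow$ (iii) $\Rightarrow$ (iv) $\Rightarrow$ (i), working throughout with the Hermite coefficient tensors and the isometry \eqref{eq:isometry}.

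\emph{(i) $\Rightarrow$ (ii).} This is immediate from \cref{lem:moments}. Since each $p_{n,i}\in\cH_{d_i}(X)$, convergence in distribution upgrades to convergence of all moments. The fourth moment of $\calN(0,\Sig_{ii})$ is $3\Sig_{ii}^2$.

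\emph{(ii) $\Rightarrow$ (iii).} Fix $i$ and drop the index. I would use the product formula for multiple Wiener--\Ito integrals, $I_d(P)^2=\sum_{r=0}^d r!\binom dr^2 I_{2d-2r}(P\tilde\otimes_r P)$. Expanding $\E p^4=\E(p^2)^2$ by orthogonality of chaoses gives the standard identity
\[
\E p^4-3(\E p^2)^2=\sum_{r=1}^{d-1}c_{d,r}\bigl(\|P\otimes_rP\|_F^2+\|P\tilde\otimes_rP\|_F^2\bigr)
\]
with positive constants $c_{d,r}$. The $r=0$ term produces $2\|P\|_F^4$ after symmetrization, after splitting $\|P\tilde\otimes_0 P\|^2$ combinatorially by how many indices cross. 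The $r=d$ term produces $\|P\|_F^4$. Since the left side tends to $0$ and every summand on the right is nonnegative, each $\|P\otimes_rP\|_F\to0$. The step I expect to be the main obstacle is the bookkeeping in the $r=0$ term: the symmetrization $\|P\tilde\otimes P\|_F^2$ must be expanded as an average over permutations and regrouped into $2\|P\|_F^4/\binom{2d}{d}$ plus contraction norms. I would do this by counting, for a uniformly random permutation of $[2d]$, how many of the first $d$ slots land in the first copy. Equivalently, I could prove the identity by Wick/diagram expansion of $\E p^4$ as a sum over perfect matchings of four $d$-stars with no self-edges, classifying each diagram by the number $r$ of edges between copies $1$ and $2$.

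\emph{(iii) $\Rightarrow$ (iv).} This is linear algebra. $P\otimes_rP$ is the unflattening of $M=(P^{(r)})^\T P^{(r)}$, so $\|P\otimes_rP\|_F=\|M\|_F\ge\|M\|=\|P^{(r)}\|^2$.

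\emph{(iv) $\Rightarrow$ (i).} Here I would use the method of moments (\cref{lem:moment-method}). Expand each mixed moment $\E\prod_j p_{n,i_j}$ via the diagram formula as a sum over perfect matchings of the half-edges of the stars $P_{n,i_1},\dots,P_{n,i_m}$, with no edge inside a single star; Hermite polynomials kill self-loops. A diagram in which each star is matched entirely to a single other star contributes products of $\E p_{n,i}p_{n,j}\to\Sig_{ij}$, which is exactly Wick's formula for $\calN(0,\Sig)$. For any other connected component, some star $P$ splits its $d$ legs into a nonempty proper subset $L$ (size $r$) going to one part of the diagram and the complement going elsewhere. The value of that component is then a contraction of the form $\mathrm{vec}(A)^\T P^{(r)}\mathrm{vec}(B)$. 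This is bounded by $\|P^{(r)}\|$ times the Frobenius norms of the tensors $A$ and $B$ obtained by contracting the remaining stars. By repeatedly applying Cauchy--Schwarz (or submultiplicativity of Frobenius norms under contraction), those norms are bounded by products of $\|P_{n,j}\|_F$, which are bounded. Hence every non-pairing diagram tends to $0$. The number of diagrams is bounded independently of $n$, so the moments converge to the Gaussian ones. Finally, the argument uses only limits of bounded scalars, so it goes through verbatim along $\cU$, as is needed in \cref{lem:wick-hermite}.
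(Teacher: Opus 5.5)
Your architecture matches the paper's. You use the same cycle of implications and the same sum-of-squares formula for $\kappa_4$ from the diagram expansion of $\E p^4$; the two terms there actually carry different coefficients, $\binom{d}{r}^2$ and $\binom{d}{r}^2\binom{2d-2r}{d-r}$, but only positivity matters. You also use the same inequality $\|P\otimes_rP\|_F\ge\|P^{(r)}\|^2$ and the same method-of-moments argument separating pair diagrams from the rest.

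The gap is in the one estimate that carries (iv)$\Rightarrow$(i). You claim that a non-pair component has value $\mathrm{vec}(A)^\T P^{(r)}\mathrm{vec}(B)$, with $A$ and $B$ the contractions of the stars on the $L$-side and the $L^c$-side. That requires deleting $P$ to disconnect the component into those two sides, which typically fails. A triangle of three degree-2 stars has value $\tr(P_1P_2P_3)$ and appears in $\E p^3$. The 4-cycle diagrams of $\E p^4$ have value exactly $\|P\otimes_rP\|_F^2$. Neither has a cut vertex. In general the rest of the component contracts to a single tensor $T$ with $d$ free legs. Then $|\langle P^{(r)},T^{(r)}\rangle|$ is controlled by $\|P^{(r)}\|$ times the nuclear norm of $T^{(r)}$, not by $\|P^{(r)}\|\,\|T\|_F$; consider $P^{(r)}=I_N$ and $T^{(r)}=I_N/\sqrt N$. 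So the Frobenius bounds you invoke do not apply as stated.

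The repair is short, but it has to be made. The paper picks adjacent stars $a,b$ in a component with at least three stars, with $r$ edges between them, so that $0<r<d_a$ after relabeling. It contracts only that multiedge, giving $Q$, the unflattening of $(P_a^{(r)})^\T P_b^{(r)}$. Then $\|Q\|_F\le\|P_a^{(r)}\|\,\|P_b\|_F$ by $\|XY\|_F\le\|X\|\,\|Y\|_F$. The remaining closed network is bounded by a product of Frobenius norms via \cref{lem:tensor-frob}.

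Alternatively, keep your split but let $A$ and $B$ also carry the indices of the edges running between the two sides. The value becomes $\langle P^{(r)},AB^\T\rangle$, and
\[
|\langle M,AB^\T\rangle|\le\|M\|\,\|AB^\T\|_*\le\|M\|\,\|A\|_F\|B\|_F,
\]
where $\|\cdot\|_*$ is the nuclear norm. Here $\|A\|_F$ and $\|B\|_F$ are bounded by products of $\|P_c\|_F$ by applying \cref{lem:tensor-frob} to the doubled networks. With either fix, the rest of your argument goes through.
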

Equivalence of (i),(ii),(iii) was proven by \cite{nualart2005central,nualart2008central}.
It is easy to check that (iii) is equivalent to (iv)
using the relations:
\begin{equation}\label{eq:norm-compare}
    \|P^{(r)}\|^2 \leq \|P \otimes_r P\|_F \leq \|P^{(r)}\| \|P\|_F\,.
\end{equation}
To see these relations, let $s_j$ be the singular values of $P^{(r)}$. \cref{eq:norm-compare} follows from the equations,
\[
    \|P^{(r)}\| = \max_j s_j, \qquad \|P \otimes_r P\|_F^2 = \sum_{j} s_j^4, \qquad \|P\|_F^2 = \sum_j s_j^2\,.
\]
Since $\|P_{n,i}\|_F^2 = \E p_{n,i}^2$ converges to the constant $\Sig_{ii}$, then \cref{eq:norm-compare} implies $\|P \otimes_r P\|_F$ vanishes if and only if $\|P_{n,i}^{(r)}\|$ vanishes. Hence (iii) is equivalent to (iv).

We now give a direct proof of \cref{thm:fourth-moment} using the moment method (see also \cite{nourdin2011yet} for a prior combinatorial proof).
The moments of Hermite polynomials have natural combinatorial interpretations in terms of matchings.
\begin{lemma}[{\cite[Chapter 3]{janson1997gaussian}}]
\label{fact:hermite-matchings}
    Let $\al^1, \dots, \al^\ell \in \N^I$ be finitely supported. Then:
    \begin{align*}
        \E\left[\prod_{j=1}^\ell \He_{\al^j}(X)\right] &= \sum_{M \in \calM(\al^1,\dots,\al^\ell)} \prod_{uv \in M} \mathbf{1}_{u=v}
    \end{align*}
     where we take $\ell$ groups of objects, with the $j$th group being a multiset of size $|\al^j|$ with $\al^j_i$ copies of each $i \in I$. Let $\calM(\al^1,\dots, \al^\ell)$ be the set of perfect matchings on the disjoint union of the groups such that no two objects are matched within a group. The equality $u=v$ is on $I$.
\end{lemma}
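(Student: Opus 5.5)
We prove the matching formula by expanding each Hermite polynomial in monomials and applying Wick's (Isserlis') theorem.

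\emph{Setup.} Since distinct coordinates of $X$ are independent and $\He_{\al}(X)=\prod_i \He_{\al_i}(X_i)$, the expectation factorizes over $i\in I$. The count of matchings factorizes the same way, because any matching contributing a nonzero term pairs only equal labels. So it suffices to treat a single coordinate: show that
\[
    \E\prod_{j=1}^\ell \He_{a_j}(X)
\]
equals the number of perfect matchings of $\ell$ groups of sizes $a_1,\dots,a_\ell$ with no edge inside a group.

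\emph{Step 1: Hermite polynomials as sums over partial matchings.} Use the identity
\[
    \He_a(x)=\sum_{m}(-1)^{|m|}x^{a-2|m|},
\]
where $m$ ranges over partial matchings of a set of $a$ points. This is the standard combinatorial form of $\He_a(x)=\sum_k(-1)^k\frac{a!}{k!2^k(a-2k)!}x^{a-2k}$; one can also verify it by checking the recursion $\He_{a+1}=x\He_a-a\He_{a-1}$. Multiplying over the groups gives
\[
    \prod_j \He_{a_j}(X)=\sum_{m_1,\dots,m_\ell}(-1)^{\sum_j|m_j|}X^{N-2\sum_j|m_j|},
\]
where each $m_j$ is a partial matching inside group $j$ and $N=\sum_j a_j$.

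\emph{Step 2: Wick's formula.} The moment $\E X^{k}$ is the number of perfect matchings of $k$ points. The leftover $N-2\sum_j|m_j|$ unmatched points are matched arbitrarily by some matching $M'$. The pair $(\{m_j\},M')$ then determines a perfect matching $M=M'\cup\bigcup_j m_j$ of all $N$ points, together with a distinguished subset $S=\bigcup_j m_j$ of the within-group edges of $M$. Regrouping by $M$, the expectation equals
\[
    \sum_M\sum_{S\subseteq E_{\mathrm{in}}(M)}(-1)^{|S|}=\sum_M \mathbf 1[E_{\mathrm{in}}(M)=\emptyset],
\]
where $E_{\mathrm{in}}(M)$ denotes the set of edges of $M$ lying inside a single group. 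This is exactly the claimed count.

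\emph{Step 3: Reassemble.} Multiplying the per-coordinate counts recovers the sum over matchings on the disjoint union of all groups with the indicator $\prod_{uv\in M}\mathbf 1_{u=v}$.

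The main obstacle is the bijective bookkeeping in Step 2: we must check that each pair $(\{m_j\},M')$ corresponds to exactly one pair $(M,S)$, with no overcounting coming from edges of $M'$ that also happen to lie inside a group. This holds because $S$ is recorded explicitly as part of the data, so the map is injective, and every pair $(M,S)$ arises by taking $m_j$ to be the edges of $S$ in group $j$ and $M'=M\setminus S$.
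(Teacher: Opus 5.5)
Your proof is correct. The signed partial-matching expansion of $\He_a$, Wick's formula for $\E X^k$, and the bijection $(\{m_j\},M')\leftrightarrow(M,S)$ with $S\subseteq E_{\mathrm{in}}(M)$ are all valid. The alternating sum $\sum_{S\subseteq E_{\mathrm{in}}(M)}(-1)^{|S|}$ then removes exactly the matchings that have a within-group edge. The reduction to a single coordinate is also sound, because both sides factor over labels.

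The paper gives no proof of its own; it cites Janson, Chapter 3. There the statement appears as the Feynman-diagram formula for expectations of products of Wick products, and $\He_\alpha(X)$ is a special case. Your argument is the standard inclusion--exclusion derivation of that formula, made self-contained for i.i.d.\ coordinates.
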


A \emph{tensor network} is a multigraph $\pi$ along with a collection of symmetric tensors $T_a \in (\R^{n})^{\otimes \deg(a)}_\sym$ associated to the vertices of $\pi$.
The value of the tensor network is:
\[
    \val_\pi(T) = \sum_{\varphi : E(\pi) \to [n]} \prod_{a \in V(\pi)} (T_a)_{\varphi(a)}
\]
where $\varphi(a)$ is the multiset of labels on the edges incident to $a$ (the order of the incident edges does not matter since $T_a$ is assumed to be symmetric).

    \begin{lemma}\label{lem:tensor-frob}
        Let $(T, \pi)$ be a tensor network such that $\pi$ has no self-loops.
        Then:
        \[
            |\val_\pi(T)| \leq \prod_{a \in V(\pi)} \norm{T_a}_F.
        \]
    \end{lemma}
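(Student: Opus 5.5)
The plan is an induction on the number of vertices of $\pi$, peeling off one vertex at a time with Cauchy--Schwarz. Fix a vertex $a \in V(\pi)$ and split the edges of $\pi$ into three classes: edges incident to $a$ (none are loops, by assumption), edges not touching $a$, and nothing else. Write $E_a$ for the edges at $a$ and $E' = E(\pi)\setminus E_a$. For a labeling $\psi : E_a \to [n]$, define
\[
    R(\psi) = \sum_{\varphi' : E' \to [n]} \prod_{b \neq a} (T_b)_{(\psi \cup \varphi')(b)}\,,
\]
so that $\val_\pi(T) = \sum_{\psi} (T_a)_{\psi(a)} R(\psi)$. Note that $\psi(a)$ determines exactly $\deg(a)$ indices of $T_a$, one per edge (multi-edges give repeated coordinates, but each edge contributes one slot since there are no self-loops). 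Hence $\psi \mapsto (T_a)_{\psi(a)}$ is just $T_a$ viewed as a vector indexed by $[n]^{E_a}$, with $\sum_\psi (T_a)_{\psi(a)}^2 = \norm{T_a}_F^2$. Cauchy--Schwarz gives
\[
    |\val_\pi(T)| \le \norm{T_a}_F \Big(\sum_{\psi} R(\psi)^2\Big)^{1/2}\,.
\]

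The second step is to bound $\sum_\psi R(\psi)^2$. I would view $R$ as the value of an ``open'' tensor network on $\pi - a$, in which the edges of $E_a$ become dangling legs attached to the neighbors of $a$. The cleanest route is to strengthen the induction hypothesis to open networks. The strengthened claim: for a multigraph with no self-loops plus a set of dangling half-edges, where each vertex carries a tensor of order equal to its total degree (full edges plus dangling legs), the vector of values indexed by labelings of the dangling legs has Euclidean norm at most $\prod_b \norm{T_b}_F$. Symmetry is irrelevant here, so the claim can be stated for arbitrary tensors. The closed lemma is the case with no dangling legs.

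Proving the strengthened claim again goes by removing a vertex $a$. Group its incident items into dangling legs $D_a$ and full edges $E_a$ to other vertices. For fixed external labels, the value is
\[
    \sum_{\psi: E_a\to[n]} (T_a)_{D_a,\psi}\, R(\psi, \cdot)\,,
\]
where $R$ is the open network on the remaining vertices with $E_a$ now dangling. Applying Cauchy--Schwarz over $\psi$, for each fixed labeling of $D_a$ and of the other external legs, bounds the squared value by $\sum_\psi (T_a)_{D_a,\psi}^2 \cdot \sum_\psi R(\psi,\cdot)^2$. Summing over all external labels factorizes, since the $T_a$ factor depends only on the $D_a$ labels and the $R$ factor only on the remaining external labels. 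This yields $\norm{T_a}_F^2 \cdot \norm{R}^2$, and induction finishes. The base case, with no vertices, is trivial; a single vertex with only dangling legs is just $\norm{T_a}_F$.

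The main obstacle is bookkeeping. Each edge must correspond to a distinct tensor slot, so that the Cauchy--Schwarz vector really has norm $\norm{T_a}_F$. This is where the no-self-loop hypothesis enters: a loop at $a$ would force two slots of $T_a$ to be equal and summed, which is a trace and is not controlled by the Frobenius norm. I would also note that multi-edges between $a$ and $b$ are fine, since they simply become several independent dangling legs.
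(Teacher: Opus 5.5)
Your proof is correct. It differs from the paper's in how the induction step shrinks the network.

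The paper first reduces to connected $\pi$. It then contracts all edges between two adjacent vertices $a,b$ into a single vertex carrying the contracted tensor $Q$. It bounds $\|Q\|_F\le\|T_a\|_F\|T_b\|_F$ by applying $\|AB\|_F\le\|A\|_F\|B\|_F$ to flattenings. The network stays closed, but $Q$ is no longer symmetric, so the paper strengthens the induction hypothesis to non-symmetric tensors.

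You instead delete a single vertex. This leaves every remaining tensor unchanged but creates dangling legs, so you strengthen the hypothesis to open networks instead, and then need only one Cauchy--Schwarz per deletion.

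What your route buys:
\begin{itemize}
\item It proves a more general statement: the output tensor of any loop-free open network has Frobenius norm at most $\prod_b\|T_b\|_F$. The paper's bound on $\|Q\|_F$ is the two-vertex instance.
\item It needs no reduction to connected networks.
\end{itemize}

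What the paper's route buys:
\begin{itemize}
\item It is shorter.
\item Its merge step is exactly the device reused in the proof of \cref{lem:moment}. There the merged tensor is bounded using the spectral norm of one factor, and \cref{lem:tensor-frob} is then applied to a network containing the non-symmetric vertex $Q$. Your version also covers that use, since you state it for arbitrary tensors.
\end{itemize}

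One small wording point: parallel edges between $a$ and a neighbor do not give ``repeated coordinates''. Each carries an independent label and occupies its own slot. Your conclusion $\sum_\psi (T_a)_{\psi(a)}^2=\|T_a\|_F^2$ is nonetheless right.
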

    \begin{proof}
        $\val_\pi(T)$ factors across connected components of $\pi$, so we may assume $\pi$ is connected.
        Use induction on $|V(\pi)|$. We also upgrade the statement to allow non-symmetric tensors (in this case, each vertex of $\pi$ comes with an ordering of the incident edges).
        
        In the base case, $\pi$ is a single multiedge between two tensors $R,S$,
        \[
            \abs{\val_\pi(T)} = \left|\sum_{i \in [n]^d} R_i S_i\right| \leq \left(\sum_{i \in [n]^d} R_i^2\right)^{1/2} \left(\sum_{i \in [n]^d} S_i^2\right)^{1/2} = \norm{R}_F \norm{S}_F\,.
        \]
        
        In the induction step, let $a,b$ be the endpoints of a multiedge in $\pi$ with associated tensors $R,S$.
        Let $\pi'$ be $\pi$ with the multiedge contracted.
        Let $Q$ be the contraction of $R,S$ along the multiedges between $a,b$
        and associate $Q$ to the contracted vertex in $\pi'$.
        Call this new tensor network $(T', \pi')$. Then $\val_\pi(T) = \val_{\pi'}(T')$ by definition. By induction,
        \[
            \abs{\val_{\pi'}(T')} \leq \norm{Q}_F \prod_{c \in V(\pi) \setminus \{a,b\}} \norm{T_{c}}_F\,.
        \]
        We can express $Q$ as a flattened matrix multiplication $R^{(L_a,R_a)} S^{(L_b,R_b)}$
        followed by unflattening, where $R_a,L_b$ are the edges between $a,b$.
        Using $\norm{AB}_F \leq \norm{A}_F \norm{B}_F$ on the matrix multiplication,
        \[
            \norm{Q}_F \prod_{c \in V(\pi) \setminus \{a,b\}} \norm{T_c}_F \leq \norm{R}_F \norm{S}_F \prod_{c \in V(\pi) \setminus \{a,b\}} \norm{T_{c}}_F
            = \prod_{a \in V(\pi)} \norm{T_a}_F\,. \qedhere
        \]
    \end{proof}

\begin{lemma}\label{lem:moment}
    Let $d_1, \dots, d_k \geq 1$. Let $p_a \in \cH_{d_a}(X_1, \dots, X_n)$ such that $\E p_a^2 \leq 1$ and let $P_a$ be their Hermite coefficient tensors.
    Assume that $\norm{P_a^{(r)}} \leq \delta$ for all $a \in [k]$ and $0 < r < d_a$.
    Let $Z=(Z_1,\ldots,Z_k)$ be a centered Gaussian vector with the same
    covariance matrix as $(p_1,\ldots,p_k)$, and let $D=\sum_{a=1}^k d_a$.
    If $D$ is even, then
      \begin{equation}\label{eq:mixed-moment-comparison}
        \left|\mathbb E\prod_{a=1}^k p_a
              -\mathbb E\prod_{a=1}^k Z_a\right|
        \le
        \frac{(D-1)!!}{\sqrt{d_1!\cdots d_k!}}\,\delta.
      \end{equation}
  If $D$ is odd, both expectations vanish.
\end{lemma}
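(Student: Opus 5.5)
Expand each $p_a$ in the Hermite basis via \eqref{eq:coefficient-tensor} and apply \cref{fact:hermite-matchings} to the product $\prod_a p_a$. We have
\[
\E\prod_{a=1}^k p_a = \frac{1}{\sqrt{d_1!\cdots d_k!}}\sum_{M} \val_{\pi_M}(P_1,\dots,P_k),
\]
where $M$ ranges over perfect matchings of the $D$ half-edges ($d_a$ half-edges attached to vertex $a$) with no half-edge matched to another at the same vertex. Here $\pi_M$ is the resulting multigraph on $[k]$, which has no self-loops. The point is that, for each fixed index assignment, the Hermite matching sum enforces equal labels on matched half-edges. Summing over labelings then turns the expectation into a sum of tensor network values. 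If $D$ is odd there are no perfect matchings, so the moment vanishes; the Gaussian side vanishes as well, since $Z$ is centered. The total number of such matchings is at most $(D-1)!!$.

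Next we split the matchings into two classes.
\begin{itemize}
\item \emph{Full-pair matchings.} The vertex set decomposes into pairs $\{a,b\}$ with $d_a=d_b$, and all $d_a$ half-edges of $a$ are matched to $b$.
\item \emph{Remaining matchings}, in which some vertex $a$ has its half-edges split among at least two neighbors.
\end{itemize}

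For a full-pair matching, $\val$ factors as $\prod_{\{a,b\}}\ip{P_a,P_b}_F$. For a given pairing of the vertices, the number of such matchings is $\prod d_a!$. Dividing by $\sqrt{\prod d_a!}$ gives exactly $\prod_{\{a,b\}}\E p_ap_b$, using \eqref{eq:isometry}. Pairs with $d_a\ne d_b$ contribute zero on both sides, because $\E p_ap_b=0$ by orthogonality of chaoses. Summing over pairings therefore reproduces $\E\prod_a Z_a$ by Wick's formula.

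It remains to bound each remaining matching by $\delta$, that is, to show $|\val_{\pi_M}(P)|\le\delta$ whenever some vertex $a$ splits. This is the key step.

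Let $L$ be the set of half-edges at $a$ going to one neighbor $b$, with $0<|L|<d_a$. Contract all other vertices, following the procedure in the proof of \cref{lem:tensor-frob}, to obtain a network in which $P_a$ is attached to a single tensor $Q$ with $\norm{Q}_F\le\prod_{c\ne a}\norm{P_c}_F$. The catch is that after contraction the edges at $a$ may all go to $Q$, which would lose the spectral information. Instead we apply Cauchy--Schwarz on the flattening $P_a^{(|L|)}$, viewing it as a matrix from the $L$-indices to the complementary indices. The value then has the form $\mathrm{tr}(P_a^{(L,\bar L)}\,G)$, where $G$ is the matrix obtained by contracting the rest of the network. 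The rest of the network, with $a$ deleted, is a network with open legs $L\cup\bar L$. If that graph had no edge joining an $L$-leg's vertex and a $\bar L$-leg's vertex, $G$ would factor as a rank-one outer product $uv^\T$ with $\norm{u}\norm{v}\le\prod_{c\neq a}\norm{P_c}_F$, and the bound $\norm{P_a^{(|L|)}}\prod_{c\ne a}\norm{P_c}_F\le\delta$ would follow at once.

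In general $G$ is not rank one. Here I would use the nuclear-norm duality $|\mathrm{tr}(AG)|\le\norm{A}\,\norm{G}_{S_1}$. I would then try to bound $\norm{G}_{S_1}$ by the product of Frobenius norms, by splitting the remaining network into two halves (the $L$-side and the $\bar L$-side) joined by internal edges, so that $G=BC$ and $\norm{G}_{S_1}\le\norm{B}_F\norm{C}_F$. The partition can be chosen, for instance, by putting $b$ together with its contracted side on one half; the two halves then contract via \cref{lem:tensor-frob}-type bounds to Frobenius norms at most $\norm{P_b}_F\cdot(\cdots)$. I expect finding this factorization cleanly to be the main obstacle. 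If it fails, a fallback is to cut the graph along the edges at a suitable vertex and apply Cauchy--Schwarz to reduce to a contraction norm $\norm{P_a\otimes_r P_a}_F$, which by \eqref{eq:norm-compare} is at most $\delta$.

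Combining the two cases, the difference of the moments is at most $\frac{1}{\sqrt{\prod d_a!}}$ times the number of remaining matchings times $\delta$, which is at most $\frac{(D-1)!!}{\sqrt{\prod d_a!}}\,\delta$, as claimed in \eqref{eq:mixed-moment-comparison}.
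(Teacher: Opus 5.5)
Your outline matches the paper's proof: the Feynman-diagram expansion, full-pair diagrams reproducing Wick's formula for $\E\prod_a Z_a$, a bound of $\delta$ on every other diagram, and the $(D-1)!!$ count. Your ``remaining'' class (some vertex has at least two neighbors) is exactly the paper's class of diagrams with a component on at least three vertices. The gap is the per-diagram bound $|\operatorname{val}_\pi(P)|\le\delta$. This is the heart of the lemma, and you leave it open as ``the main obstacle.'' Your fallback does not rescue it. A Cauchy--Schwarz reduction to $\|P_a\otimes_r P_a\|_F$ brings in a second copy of $P_a$, so it bounds $|\operatorname{val}_\pi(P)|^2$ and gives only something of order $\sqrt{\delta}$, not the linear-in-$\delta$ bound.

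The missing idea is to contract not everything except $a$, but only the single link between $a$ and the chosen neighbor $b$. Let $r=|L|$ be the number of $a$--$b$ edges, so $0<r<d_a$. Merge $a$ and $b$ into one vertex carrying $Q$, the unflattening of $(P_a^{(r)})^{\top}P_b^{(r)}$. Then $\|Q\|_F\le\|P_a^{(r)}\|\,\|P_b\|_F\le\delta$ by $\|AB\|_F\le\|A\|\,\|B\|_F$. The merged network has no self-loops, since every $a$--$b$ edge is absorbed into $Q$. So \cref{lem:tensor-frob}, in its non-symmetric form, gives $|\operatorname{val}_\pi(P)|\le\|Q\|_F\prod_{c\ne a,b}\|P_c\|_F\le\delta$. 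This is exactly the paper's argument.

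In your nuclear-norm formulation, the factorization you wanted is forced by the choice of $L$. Every $L$-leg of $G$ lands on $b$, and every $\bar L$-leg lands on a vertex other than $b$. Cutting $\{b\}$ from $V\setminus\{a,b\}$ therefore gives $G=BC$ with $\|B\|_F=\|P_b\|_F$, and $\|C\|_F\le\prod_{c\ne a,b}\|P_c\|_F$ by contracting $C$ as in \cref{lem:tensor-frob}. Then $|\mathrm{tr}(P_a^{(L,\bar L)}G)|\le\|P_a^{(r)}\|\,\|B\|_F\|C\|_F\le\delta$, the same estimate.

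Two smaller slips. First, for a fixed pairing $\eta$ the number of full-pair matchings is $\prod_{\{a,b\}\in\eta}d_a!=\sqrt{d_1!\cdots d_k!}$, not $\prod_a d_a!$; otherwise the normalization would not cancel. Second, for odd $D$ the Gaussian side does not vanish just because $Z$ is centered, since $k$ may be even. It vanishes because every pairing of $[k]$ then contains a pair with $d_a\ne d_b$, for which $\E Z_aZ_b=\E p_ap_b=0$.
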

\begin{proof}
    Take a set of $k$ vertices, where $a$ is incident to $d_a$ labeled half-edge endpoints. Let $\mathcal D$ be the
    set of perfect matchings of all $D$ half-edges such that no two half-edges at the
    same vertex are matched. $\pi\in\mathcal D$ forms a tensor network with $P_1, \dots, P_k$.
    We have,
    \begin{align*}
      \mathbb E\prod_{a=1}^k p_a
      &=\frac{1}{\sqrt{d_1!\cdots d_k!}} \sum_{i_a \in [n]^{d_a}} \prod_{a=1}^k (P_a)_{i_a} \E\left[\prod_{a=1}^k\He_{i_a}(X)\right] & (\text{\cref{eq:coefficient-tensor}})\\
      &=\frac{1}{\sqrt{d_1!\cdots d_k!}}
        \sum_{\pi\in\mathcal D}
           \operatorname{val}_\pi(P_1,\ldots,P_k). & (\text{\cref{fact:hermite-matchings}}) \numberthis \label{eq:wick-diagram-expansion}
    \end{align*}

\cref{eq:wick-diagram-expansion} is a ``Feynman diagram expansion'' for the moment $\E \prod_{a=1}^k p_a$.
When $D$ is odd, $\mathcal D$ is empty, and the expectation is zero as needed.

Call a diagram $\pi$ a \emph{pair diagram} if every connected component
has exactly two vertices. Such a component on vertices $a,b$ is
possible only if $d_a=d_b=d$. Its $d!$ possible matchings all
give $\langle P_a,P_b\rangle_F = \E [p_a p_b]$, and its normalization in
\eqref{eq:wick-diagram-expansion} is $1/d!$. Thus the sum over pair
diagrams equals
\[
  \sum_{\text{pairings }\eta\text{ of }\{1,\ldots,k\}}
    \prod_{\{a,b\}\in\eta}\mathbb E[p_ap_b]
  =\mathbb E\prod_{a=1}^k Z_a.
\]

Every remaining diagram has a connected component with at least
three vertices.
Choose adjacent vertices $a,b$ in this component,
and let $r\ge1$ be the number of edges between them. These edges
cannot be all edges incident to both vertices, so, after interchanging $a,b$ if
necessary, $0<r<d_a$.
As in the proof of \cref{lem:tensor-frob}, form a new tensor network $\pi'$ by contracting the vertices $a,b$ and putting at the contracted vertex the tensor $Q$ which is the contraction of $P_a, P_b$. This defines $\val_\pi(P) = \val_{\pi'}(P, Q)$.
By \cref{lem:tensor-frob},
\[
    \abs{\val_{\pi'}(P, Q)} \leq \|Q\|_F \prod_{c \in [k] \setminus \{a,b\}} \|P_c\|_F\,.
\]
We can express $Q$ as the unflattening of a matrix multiplication $(P_a^{(r)})^\T P_b^{(r)}$. Use $\|AB\|_F \leq \|A\|\|B\|_F$ on the matrix multiplication to deduce,
\[
    \|Q\|_F \prod_{c \in [k] \setminus \{a,b\}} \|P_c\|_F \leq \|P_a^{(r)}\| \prod_{b \in [k] \setminus \{a\}} \|P_b\|_F \leq \|P_a^{(r)}\| \leq \delta\,.
\]
The second-to-last inequality uses $\|P_b\|_F^2 = \E p_b^2 \leq 1$.
Thus $\abs{\val_\pi(P)} \leq \delta$ for each non-pair diagram in \cref{eq:wick-diagram-expansion}.
There are at most $(D-1)!!$ matchings of $D$ slots, which proves
\cref{eq:mixed-moment-comparison}.
\end{proof}

\begin{proof}[Proof of \cref{thm:fourth-moment}.]
(i) $\Longrightarrow$ (ii). Convergence in distribution implies convergence of moments by \cref{lem:moments}.

\noindent (ii) $\Longrightarrow$ (iii). Let $p \in \cH_d(X_1,\dots, X_n)$ have Hermite coefficient tensor $P$.
The fourth cumulant $\kappa_4 := \E p^4 - 3(\E p^2)^2$ can be written as a sum-of-squares expression in terms of tensor contractions as \cite[Proof of Proposition 3, (i) $\Longrightarrow$ (ii)]{nualart2005central},
\begin{equation}\label{eq:kappa4}
    \kappa_4 = \sum_{r=1}^{d-1} \binom{d}{r}^2 \left(\|P \otimes_r P\|_F^2 + \binom{2d-2r}{d-r} \|\Sym(P \otimes_r P)\|_F^2\right)\,.
\end{equation}
Therefore, if $\kappa_4 \rightarrow 0$ as $n \to \infty$ then all of the tensor norms $\|P \otimes_r P\|_F$ must also vanish.

Here is a combinatorial interpretation of \eqref{eq:kappa4}.
Following \cref{eq:wick-diagram-expansion},
\[
    \E p^4 = \frac{1}{(d!)^2}\sum_{\pi \in \calD} \val_\pi(P, P, P, P)
\]
where $\calD$ is the set of perfect matchings on four vertices with $d$ incident half-edges such that no half-edges are matched at the same vertex.

Suppose $\pi$ has $r$ edges matched between vertex 1 and 2.
Then there are also $r$ edges matched between vertex 3 and 4 and there are $\left(\binom{d}{r}^2r!\right)^2$ ways to choose this part of $\pi$.
We interpret the edges between 1,2 as contracted in $P \otimes_r P$
and the edges outgoing from 1,2 as the indices of $P \otimes_r P$.

There are $(2d-2r)!$ ways to choose the remaining part of $\pi$.
If the remaining part of $\pi$ matches the half-edges at 1 with the half-edges at 3 (i.e.\ $\pi$ is a cycle graph),
we obtain exactly $\val_\pi(P,P,P,P) = \|P \otimes_r P\|_F^2$.
However, in general, the sum over $\pi$ treats all outgoing edges from $\{1,2\}$
in the same way,
so the average value of $\val_\pi(P,P,P,P)$ is $\|\Sym(P \otimes_r P)\|_F^2$.
In total,
\begin{align}
    \frac{1}{(d!)^2}\sum_{\pi \in \calD} \val_\pi(P, P, P, P) &= \sum_{r=0}^d\frac{(2d-2r)!}{(d!)^2}\left(\binom{d}{r}^2r!\right)^2 \|\Sym(P \otimes_r P)\|_F^2 \notag\\
    &= \sum_{r = 0}^d \binom{d}{r}^2 \binom{2d-2r}{d-r} \|\Sym(P \otimes_r P)\|_F^2
\end{align}

We re-analyze the terms $r=0$ and $r=d$ separately.
The term $r=d$ equals $\|P\|_F^4 = (\E p^2)^2$.
All $\pi$ in the term $r = 0$ are cyclic graphs (or disconnected).
Let $s$ be the number of 1-3 edges which equals the number of 2-4 edges.
There are $\binom{d}{s}^4 (s!)^2((d-s)!)^2 = \binom{d}{s}^2 (d!)^2$ choices for $\pi$.
For these cases, $\val_\pi(P,P,P,P) = \|P \otimes_s P\|_F^2$.
Therefore,
\begin{align*}
    \frac{1}{(d!)^2}\sum_{\pi \in \calD} \val_\pi(P, P, P, P) &=  \|P\|_F^4 + \sum_{s = 0}^d \binom{d}{s}^2\|P \otimes_s P\|_F^2 + \sum_{r = 1}^{d-1}\binom{d}{r}^2 \binom{2d-2r}{d-r} \|\Sym(P \otimes_r P)\|_F^2\\
    &= 3\|P\|_F^4 + \sum_{s = 1}^{d-1} \binom{d}{s}^2 \|P \otimes_s P\|_F^2 + \sum_{r = 1}^{d-1}\binom{d}{r}^2 \binom{2d-2r}{d-r} \|\Sym(P \otimes_r P)\|_F^2
\end{align*}
Recall this equals $\E p^4$. Subtract $3\|P\|_F^4 = 3(\E p^2)^2$ to obtain $\kappa_4$.

\noindent (iii) $\Longrightarrow$ (iv). Proven immediately after the theorem statement.

\noindent 
(iv) $\Longrightarrow$ (i). Let $Z_n$ be a centered Gaussian
    vector with the same covariance matrix as
    $(p_{n,1},\ldots,p_{n,k})$.
    The coordinatewise variances of $Z_n$ are uniformly bounded
    since $\E p_{n,i}^2 \to \Sig_{ii}$,
    say by a constant $C \geq 1$.
    We may apply \cref{lem:moment}
    to get, for every $(m_1,\ldots,m_k)\in\N^k$, letting $|m| = \sum_a m_a$ and $D = \sum_a m_ad_a$ and $\delta_n = \max_{a \in [k]} \max_{0 < r < d_a} \|P_{n,a}^{(r)}\|$,
    \[
      \abs{\mathbb E\prod_{a=1}^k p_{n,a}^{m_a}
      -\mathbb E\prod_{a=1}^k Z_{n,a}^{m_a}} \leq C^{|m|}(D-1)!!\,\delta_n
      \longrightarrow0.
    \]
    The covariances and hence also all moments of $Z_{n}$ converge to those of $\calN(0,\Sig)$.
    So by the method of moments in \cref{lem:moment-method}, $(p_{n,1},\dots, p_{n,k}) \xrightarrow{d} \calN(0, \Sig)$.
\end{proof}

\subsection{Omitted moment calculations}

\begin{lemma}\label{lem:omitted-moment}
    For $i,r \in \N$, let $S_{ir} = \frac{1}{\sqrt{N}} \sum_{j=1}^N \psi_r(G_{ir,j})$. For finitely-supported $\al \in \N^{\N \times \N}$, let 
    \[
        U_{m,ir} = \frac{1}{N^{m/2}} \sum_{\substack{j_1,\dots, j_m \in [N]\\j_1,\dots, j_m\text{ distinct}}} \psi_r(G_{ir,j_1})\psi_r(G_{ir,j_2})\cdots \psi_r(G_{ir,j_m}), \qquad U_\alpha = \prod_{i,r} U_{\alpha_{ir}, ir}\,.
    \]
    Then for $G_{ir,j}\overset{\text{i.i.d.}}{\sim} \calN(0,1)$,
    \[
        \E (U_\al - \He_\al(S))^2 \leq O_\alpha(N^{-1})\,.
    \]
\end{lemma}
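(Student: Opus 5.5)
Since the variables $G_{ir,j}$ for different pairs $(i,r)$ are independent, and both $U_\alpha$ and $\He_\alpha(S)=\prod_{i,r}\He_{\alpha_{ir}}(S_{ir})$ factor over pairs $(i,r)$, I first reduce to a single pair. Write $U_\alpha-\He_\alpha(S)$ as a telescoping sum in which one factor at a time is replaced by its counterpart: each term is a product of independent factors, exactly one of which is $U_{m,ir}-\He_m(S_{ir})$. By independence, the second moment of such a term is $\E(U_{m,ir}-\He_m(S_{ir}))^2$ times the second moments of the remaining factors. Those remaining factors are $U_{m',ir'}$ or $\He_{m'}(S_{ir'})$. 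The former has second moment at most $m'!$, by orthonormality of the $\psi_r$ products over distinct indices. The latter is bounded uniformly in $N$ because $S_{ir'}$ has all moments bounded uniformly in $N$ (its moments converge to the Gaussian ones by the CLT, or by a direct cumulant bound). Cauchy--Schwarz over the finitely many terms then reduces everything to showing $\E(U_m-\He_m(S))^2=O_m(N^{-1})$ for one pair, where $S=N^{-1/2}\sum_j Y_j$, $Y_j=\psi_r(G_j)$ are i.i.d., mean zero and unit variance, and $U_m=N^{-m/2}\sum_{\text{distinct}}Y_{j_1}\cdots Y_{j_m}$.

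For the single-pair estimate I expand $\He_m(S)=\sum_{k}c_{m,k}S^{m-2k}$ with $c_{m,0}=1$ and $c_{m,k}=(-1)^k\frac{m!}{2^kk!(m-2k)!}$. I then expand $S^{m}=N^{-m/2}\sum_{j_1,\dots,j_m}Y_{j_1}\cdots Y_{j_m}$ and group the index tuples by the set partition they induce on $[m]$. Tuples with all indices distinct give exactly $U_m$. A block of size two contributes $N^{-1}\sum_j Y_j^2$, which equals $1+O_{L_2}(N^{-1/2})$ by the law of large numbers in $L_2$. Blocks of size at least three, and the interactions between $\sum_jY_j^2-N$ and the other indices, contribute terms carrying extra negative powers of $N$. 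The upshot is that, up to an $L_2$ error of $O(N^{-1/2})$, the power $S^m$ equals $\sum_k\frac{m!}{2^kk!(m-2k)!}U_{m-2k}$. This is the classical fact that powers decompose into Wick products by pairings. Substituting into the Hermite expansion, the combinatorial identity $\sum_k c_{m,k}\,\mathrm{coef}=\mathbf 1_{\text{top}}$ is the same identity that inverts $x^m=\sum_k\frac{m!}{2^kk!(m-2k)!}\He_{m-2k}(x)$ against the Hermite expansion. It leaves $U_m$ plus error, giving $\E(U_m-\He_m(S))^2=O(N^{-1})$.

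The main obstacle is making the error bookkeeping rigorous: showing that every non-leading grouped sum has squared $L_2$ norm $O(N^{-1})$. My plan is to write each error term as $N^{-m/2}\sum$ over tuples matching a fixed partition, with blocks replaced by centered quantities such as $Y_j^2-1$ or $Y_j^{\ell}$ for $\ell\ge3$. I then compute its second moment by a diagram count. Only index configurations in which every distinct index appears at least twice in the product of the two copies survive, since the $Y_j$ have mean zero and the centered block variables also have mean zero. Counting free indices shows there are at most $N^{m-1}$ such configurations whenever a non-pair block or a centered pair block is present. All moments of $Y_j$ are finite by hypercontractivity (\cref{lem:hypercontractivity}), so each term is $O_m(N^{-1})$, and a finite sum over partitions finishes the argument.
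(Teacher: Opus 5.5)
Your argument is correct, but it takes a different route from the paper's. The paper never expands $\He_m(S)$ in $L_2$. Instead it expands the square,
$\E(U_m-\He_m(S))^2=\E\He_m(S)^2-2\E\He_m(S)U_m+\E U_m^2$,
and observes that the cross term equals $\E U_m^2$ exactly. The reason is that every term of $S^{m'}$ with $m'<m$, and every term of $S^m$ with a repeated index, involves fewer than $m$ distinct $Y_j$, so it is orthogonal to $U_m$. Two scalar facts then finish the single-pair case. First, $\E U_m^2=m!(N)_m/N^m$ exactly. Second, $\E\He_m(S)^2=m!+O(N^{-1})$ follows from $\E S^{2\ell}=(2\ell-1)!!+O(N^{-1})$, since $\He_m(S)^2$ is even and needs only even moments. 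The error is thus a difference of two numbers, both $m!+O(N^{-1})$. Independence handles the multivariate case, because each of the three terms factors over $(i,r)$.

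So the paper needs only scalar moment counts and avoids the $L_2$ partition bookkeeping you flag as the main obstacle. Your approach costs more but yields more: an explicit finite-$N$ Wick expansion
$S^m=\sum_k\frac{m!}{2^kk!(m-2k)!}U_{m-2k}+O_{L_2}(N^{-1/2})$
that shows exactly what $\He_m(S)-U_m$ consists of. Your telescoping reduction also applies to any factorwise $L_2$ approximation with bounded second moments.

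Two small repairs are needed.
\begin{itemize}
\item $Y_j^\ell$ for $\ell\ge3$ is not centered in general (e.g.\ $\E\psi_2(G)^3=2\sqrt2$), so your ``every index appears twice'' criterion does not apply to such blocks as stated. You can either center them and note that the mean part carries a factor $N^{1-\ell/2}\le N^{-1/2}$, or observe that such a block spends $\ell\ge3$ slots on one index, so the free-index count still drops by one.
\item Uniform moment bounds for $S_{ir}$ do not follow from the CLT alone. They do follow from hypercontractivity, since $S_{ir}\in\cH_r$ has unit variance.
\end{itemize}
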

\begin{proof}
Consider generic $S = N^{-1/2} \sum_{j=1}^N \psi_r(G_j)$ and $U_m = N^{-m/2}\sum_{j_1,\dots, j_m \text{ distinct}} \prod_{a=1}^m\psi_r(G_{j_a})$.
Using \cref{fact:hermite-matchings},
\[
    \E S^{2\ell} = \frac{1}{(Nr!)^{\ell}}\sum_{M \in \calM(2\ell,r)} \sum_{\varphi: [2\ell] \to [N]} \prod_{uv \in M}\mathbf{1}_{\varphi(\text{group}(u)) = \varphi(\text{group}(v))}\,.
\]
where we have $2\ell$ groups of $r$ vertices each, $\calM(2\ell, r)$ is the set of perfect matchings on the vertices such that no two vertices are matched within a group,
and the indicator function checks whether all matched pairs use the same Gaussian $G_{k}$ on their group. The inner summation is $N^{c(M)}$ where $c(M)$ is the number
of connected components of $M$ when the groups are contracted. Hence,
\begin{equation}\label{eq:s1}
    \E S^{2\ell} = \frac{1}{(Nr!)^{\ell}}\sum_{M \in \calM(2\ell,r)} N^{c(M)} = (2\ell-1)!! + O_\alpha(N^{-1})\,.
\end{equation}
The second equality separates the dominant terms where $\calM$ is a perfect matching between groups from the remaining $O_\alpha(1)$ terms.

Express the Hermite polynomials in the monomial basis and use \cref{eq:s1} to deduce $\E \He_m(S)^2 = \E \He_m(X)^2 + O_\alpha(N^{-1}) = m! + O_\alpha(N^{-1})$ for standard Gaussian variables $X$. We also have
\begin{equation}\label{eq:s2}
    \E \He_m(S) U_m  = \E U_m^2= m! \frac{(N)_m}{N^m} = m! + O_m(N^{-1})\,.
\end{equation}
where $(N)_m$ is the falling factorial. Indeed, the first equality observes that the only component of $\He_m(S)$ which is not orthogonal to $U_m$ is its ``leading monomial'' $U_m$ itself.
The second equality observes that nonzero terms in $\E U_m^2$ are perfect matchings between the indices in $U_m$.
Therefore,
\[
    \E (\He_m(S) - U_m)^2 = \E \He_m(S)^2 - 2\E\He_m(S)U_m + \E U_m^2 = O_{\alpha}(N^{-1})\,.
\]
Independence across the $i,r$ coordinates implies the same holds for the multivariate case.
\end{proof}

\begin{lemma}\label{lem:combinatorial-variance}
    Let $d_1, \dots, d_k \geq 1$ and $D = \sum_{a=1}^k d_a$. Let $p_a \in \cH_{d_a}(X_1, \dots, X_n)$ such that $\E p_a^2 = 1$ and let $P_a$ be their Hermite coefficient tensors.
    Assume that $\|P_a^{(r)}\| \leq \delta$ for all $a \in [k]$ and $0 < r < d_a$.
    Then
    \[
        \left|\E\left(\overset{k}{\underset{a = 1}{\diamond}} p_a\right)^2 - \sum_{M \in \calM(2,k)} \prod_{uv \in M} \E[p_u p_v]\right| \leq \frac{D!}{d_1!\cdots d_k!}\delta
    \]
    where $\calM(2,k)$ is the set of perfect matchings between two copies of $[k]$.
\end{lemma}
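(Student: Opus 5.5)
We compute $\E(\diamond_a p_a)^2$ with the Feynman diagram expansion, in the spirit of \cref{lem:moment}. By iterating \cref{eq:top-product-tensor}, a non-symmetric Hermite coefficient tensor for $\diamond_{a=1}^k p_a$ is
\[
W = \sqrt{\tfrac{D!}{d_1!\cdots d_k!}}\, P_1\otimes\cdots\otimes P_k .
\]
The coefficients combine because $c_{r,s}c_{r+s,t}=\sqrt{(r+s+t)!/(r!s!t!)}$, which follows from associativity of $\diamond$. Hence, by \cref{eq:isometry}, $\E(\diamond_a p_a)^2=\|\Sym W\|_F^2=\ip{W,\Sym W}$. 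Expanding the symmetrization as an average over the $D!$ permutations $\sigma$ of the $D$ index slots gives
\[
\E(\diamond_a p_a)^2 = \frac{D!}{d_1!\cdots d_k!}\cdot\frac{1}{D!}\sum_{\sigma\in S_D}\ip{P_1\otimes\cdots\otimes P_k,\ \sigma\cdot(P_1\otimes\cdots\otimes P_k)} .
\]
Each term is the value of a tensor network. There are $2k$ vertices: a left copy and a right copy of each $P_a$. Every edge joins a left vertex to a right vertex, since $\sigma$ matches left slots to right slots. This network is bipartite, so it has no self-loops and no edges within a copy.

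Next I separate the diagrams by their connected components, as in \cref{lem:moment}. Call $\sigma$ a \emph{pair diagram} if every component consists of exactly two vertices. Bipartiteness forces such a component to be a left copy of $a$ joined to a right copy of $b$, with all $d_a=d_b$ slots going between them. Let $M\in\calM(2,k)$ be the induced matching of the two copies of $[k]$ (necessarily with $d_u=d_v$ for $uv\in M$). A given $M$ arises from exactly $\prod_a d_a!$ permutations $\sigma$, since each block of slots can be permuted freely. Each such $\sigma$ contributes $\prod_{uv\in M}\ip{P_u,P_v}=\prod_{uv\in M}\E p_up_v$. The total contribution of pair diagrams is therefore
\[
\frac{1}{d_1!\cdots d_k!}\cdot\prod_a d_a!\cdot\sum_{M}\prod_{uv\in M}\E p_up_v=\sum_{M\in\calM(2,k)}\prod_{uv\in M}\E p_u p_v ,
\]
which is the main term in the statement.

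For every other $\sigma$, some component has at least three vertices. Pick adjacent vertices $a$ (left) and $b$ (right) in it, joined by $r$ edges. The edges between them cannot exhaust the slots of both vertices, otherwise the component would have only these two vertices. After swapping roles if needed, $0<r<d_a$. Contract $a$ and $b$ into $Q$, obtained by unflattening $(P_a^{(r)})^\T P_b^{(r)}$ with suitable flattenings. Using $\|AB\|_F\le\|A\|\,\|B\|_F$ and then \cref{lem:tensor-frob} on the contracted network (still loop-free) gives
\[
|\val|\le \|P_a^{(r)}\|\prod_{c\neq a}\|P_c\|_F\le\delta ,
\]
since every $\|P_c\|_F=1$. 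There are at most $D!$ non-pair permutations, each weighted by $\frac{1}{d_1!\cdots d_k!}\cdot\frac{D!}{D!}$. The error is therefore at most $\frac{D!}{d_1!\cdots d_k!}\delta$, as claimed.

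The main obstacle is the bookkeeping of the first step. One must verify that the symmetrization expansion really counts each pair matching $M$ exactly $\prod_a d_a!$ times with the stated normalization. One must also check that a non-pair component always has an adjacent pair with $0<r<d_a$ for one endpoint, with bipartiteness ensuring no self-loops arise after contraction.
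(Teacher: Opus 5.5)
Your proof is correct and follows the paper's own (sketched) argument. Expanding $\langle W,\mathrm{Sym}\,W\rangle$ over the $D!$ permutations is the same two-group Hermite-matching diagram expansion the paper invokes; pair diagrams then give the main term, and every other diagram is bounded by $\delta$ through the spectral-norm contraction of \cref{lem:moment}, with at most $D!$ such diagrams each weighted by $1/(d_1!\cdots d_k!)$. One point worth stating explicitly is that matchings in $\calM(2,k)$ that pair $u,v$ with $d_u\neq d_v$ contribute zero because $\E p_u p_v=0$, which is why your sum over pair diagrams equals the full sum over $\calM(2,k)$.
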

\begin{proof}[Proof sketch.]
    The tensor $P = \sqrt{\frac{D!}{d_1!\cdots d_k!}}\otimes_{a=1}^k P_a$ represents the top product $\diamond_{a=1}^k p_a$.
    Then we write a diagrammatic expansion for $\E \left(\overset{k}{\underset{a = 1}{\diamond}} p_a\right)^2$ in the same way as \cref{lem:moment}.
    Separating out the dominant and subdominant perfect matchings
    in the same way proves the claim.
\end{proof}

\end{document}